\documentclass{article}
\usepackage{arxiv}
\usepackage{hyperref}
\usepackage[utf8]{inputenc}

\usepackage{setspace}

\usepackage{amssymb,amsmath,amsthm,eucal}

\usepackage[english]{babel}

\usepackage[T1]{fontenc}
\usepackage[utf8]{inputenc}
\numberwithin{equation}{section}

\usepackage{graphicx}
\usepackage{tabularx}
\usepackage{amsfonts,amsmath,amssymb}
\usepackage{booktabs}
\usepackage{float}
\usepackage{subcaption}
\usepackage{multirow}
\usepackage{xcolor}
\usepackage{color}

\newtheorem{tr}{Theorem}
\newtheorem{proposition}{Proposition}
\newtheorem{definition}{Definition}
\newtheorem{lemma}{Lemma}
\newtheorem{remark}{Remark}
\usepackage{hyperref}
\usepackage{tabularx}
\usepackage{booktabs}
\title{Turing-region preservation in matrix-oriented splitting methods for reaction–diffusion systems}

\author{
 Angela Monti \\
  Istituto per le Applicazioni del Calcolo \lq \lq M. Picone\rq \rq \\
  National Research Council (CNR)\\
  via G. Amendola 122/D, Bari, Italy\\
  \texttt{angela.monti@cnr.it} 
  \And
  Fasma Diele \\
  Istituto per le Applicazioni del Calcolo \lq \lq M. Picone\rq \rq \\
  National Research Council (CNR)\\
  via G. Amendola 122/D, Bari, Italy\\
  \texttt{fasma.diele@cnr.it}
  \And
  Carmela Marangi \\
  Istituto per le Applicazioni del Calcolo \lq \lq M. Picone\rq \rq \\
  National Research Council (CNR)\\
  via G. Amendola 122/D, Bari, Italy\\
  \texttt{carmela.marangi@cnr.it}
}

\begin{document}

\maketitle

\begin{abstract}
We develop matrix-oriented formulations of first-order splitting integrators
for two-species reaction--diffusion systems on two-dimensional domains, using
the Gierer--Meinhardt activator--inhibitor system as a canonical benchmark for discrete Turing instability. Exploiting the differential matrix equation
associated with tensor-product spatial discretizations, we obtain the families \(\mathrm{IE}\!-\!\mathcal{S}\) and \(\mathrm{EX}\!-\!\mathcal{S}\), in which the diffusive flow is treated by implicit Euler or exactly, respectively, and the
reaction substep is approximated by explicit, symplectic, adjoint-symplectic, Poisson, and explicit-variant local maps. Starting from the continuous diffusion-driven instability threshold, expressed through the modal relation
\(J_\mu = J^* - \mu D\), we derive the fully discrete modal
amplification matrices and their Jury conditions. These conditions separate the continuous Turing mechanism, carried by the first Jury condition, from purely
discrete effects carried, in particular, by the second. Specializing the analysis to the Gierer--Meinhardt model, we exhibit two opposite pathologies. First, in a continuous Turing-stable regime, IMEX may generate a stable spurious numerical pattern through a violation of the second Jury condition. Second, in a real continuous Turing regime, the adjoint-symplectic family may be spuriously stable and suppress the pattern that IMEX scheme correctly  detects. For IMEX, whose first Jury condition reproduces the sign of the continuous Turing polynomial exactly, we further give an explicit time-step condition that guarantees preservation of the continuous Turing region, and we show that controlling the second Jury condition is needed only in the Turing-stable regime. These examples show that 
 each integrator induces its own discrete Turing region, which should be compared with the continuous one before the scheme is used to interpret numerical patterns; we frame this requirement as the preservation of a qualitative property of the continuous problem, in the spirit of structure-preserving
 numerical integration.
\end{abstract}

\section{Introduction}
\label{intro}

Reaction-diffusion systems are a classical framework for the description of
self-organization phenomena in biology, ecology, chemistry, and related areas.
In particular, diffusion-driven instability provides one of the main mechanisms
through which a spatially homogeneous equilibrium may lose stability and give
rise to non-homogeneous patterns \cite{turing1952chemical}. In most theoretical studies, this mechanism
is analyzed at the continuous level, or after spatial semi-discretization,
through modal conditions involving the linearized diffusion-reaction operator;
see, e.g.,~\cite{murray2003mathematical}. However, the actual numerical
simulation of such systems is produced by a fully discrete dynamical system,
and therefore by a discrete semiflow which may exhibit stability and
instability properties that differ, sometimes substantially, from those of the
continuous model.

We focus on spatially extended systems described by two-species
reaction-diffusion equations in the general form
\begin{equation}
\label{eq:model_intro}
\left\{
\begin{array}{lll}
u_t &=& l_u(u) + f(u,v),\\
v_t &=& l_v(v) + g(u,v),
\end{array}
\right.
\end{equation}
where \(u(x,y;t)\) and \(v(x,y;t)\) represent two interacting species evolving
in
\[
\Omega_T := \Omega \times (0,T),
\]
with \(\Omega \subset \mathbb R^2\) bounded and open. We assume that the
operators \(l_u\) and \(l_v\) are linear diffusion operators, and that
suitable initial and boundary conditions are prescribed.

A central point of this paper is that consistency with the continuous model as
the discretization parameters tend to zero is not, by itself, a sufficient
practical justification for the numerical reproduction of patterns. In
pattern-forming regimes one usually needs long-time integrations in order to
distinguish transient structures from robust asymptotic patterns and to assess
their numerical stability. In such situations, taking extremely small time
steps may become computationally prohibitive, and the discrete dynamics
induced by the time integrator can no longer be regarded as a negligible
perturbation. The numerical method may anticipate the onset of instability,
delay it, suppress it on relevant modes, or generate spurious destabilization
mechanisms that have no counterpart in the continuous problem.

This issue is especially relevant in recent reaction-diffusion-chemotaxis
applications, where long transients, reactivity, and spatial self-organization
play a fundamental role. In particular, recent studies on soil organic carbon
dynamics have shown that microbial activity, chemotaxis, and transient
amplification mechanisms can strongly affect the emergence and interpretation
of spatial patterns~\cite{monti2025socpatterns,diele2025transientreactivity}. One motivation for
the present work is therefore to provide a theoretical justification for the
use of first-order implicit-symplectic and Poisson-type splitting schemes in
such contexts, and to compare them systematically with the IMEX approach,
which remains one of the standard numerical choices for the simulation of
pattern-forming reaction-diffusion systems.

The numerical treatment of \eqref{eq:model_intro} relies on a classical
semi-discretization in space, for instance by finite differences, finite
elements or spectral methods, leading to a system of ordinary differential
equations
\begin{equation}
\label{eq:MOL_intro}
\left\{
\begin{array}{rcl}
\mathbf{\dot u} &=& A_u \mathbf u + \mathbf f(\mathbf u,\mathbf v),\\[2mm]
\mathbf{\dot v} &=& A_v \mathbf v + \mathbf g(\mathbf u,\mathbf v),
\end{array}
\right.
\quad \mathbf u(0)=\mathbf u_0,\quad \mathbf v(0)=\mathbf v_0.
\end{equation}
On rectangular domains, tensor-product discretizations yield matrices of
Kronecker form, which can be rewritten in matrix-oriented
formulation~\cite{d2019matrix}. This viewpoint avoids the explicit
construction and manipulation of large Kronecker matrices and leads to
efficient formulations for both the continuous semi-discrete problem and the
associated time integrators.

Let \(\Omega=[0,l_x]\times[0,l_y]\) and assume that the diffusion operators
admit tensor-product discretizations. Then
\begin{equation}
\label{eq:A_intro}
\begin{array}{l}
A_u = I_y \otimes T_{11} + T_{12}^T \otimes I_x,\\[0.4em]
A_v = I_y \otimes T_{21} + T_{22}^T \otimes I_x,
\end{array}
\end{equation}
where the matrices \(T_{11},T_{21}\) discretize the second derivative in one
spatial direction and \(T_{12},T_{22}\) the second derivative in the other.
At each time \(t\), we explicitly use the matrices
\(U(t),V(t)\in\mathbb R^{N_x\times N_y}\) containing the same entries as the
vector unknowns \(\mathbf u(t)\) and \(\mathbf v(t)\). Then
\eqref{eq:MOL_intro} can be written as the differential matrix equation
\begin{equation}
\label{eq:MOLmatricial}
\left\{
\begin{array}{lll}
\dot U &=& T_{11}U+UT_{12}+F(U,V),\\[0.3em]
\dot V &=& T_{21}V+VT_{22}+G(U,V),
\end{array}
\right.
\end{equation}
where the entries of \(F(U,V)\) and \(G(U,V)\) are defined entrywise by
\[
F(U,V)_{ij}=f(U_{ij},V_{ij}),\qquad
G(U,V)_{ij}=g(U_{ij},V_{ij}).
\]

The matrix-oriented approach has proved to be an effective alternative to
large-scale vector formulations, drastically reducing computational costs
while retaining structural advantages that are particularly appealing in the
construction of geometric and operator-splitting time
integrators~\cite{hairer2006geometric,blanes2016concise}. Splitting methods
for reaction--diffusion systems have been widely used in applications ranging
from ecological modelling to pattern
formation~\cite{diele2017numerical}. Numerical and algorithmic approaches to
Turing pattern formation in ecological reaction--diffusion models have also
been proposed~\cite{campagna2017semi}. Other time-discretization strategies for
semi-discretized reaction--diffusion PDEs include stable high-order
Runge--Kutta TASE methods~\cite{conte2026general}, as well as
problem-adapted IMEX and trigonometrically fitted methods designed to exploit
specific qualitative features of the underlying dynamics
\cite{d2019adapted}. The present work takes a
complementary viewpoint: we consider first-order splitting integrators
obtained by combining implicit Euler or exact treatment of the diffusive flow
with local reaction solvers of symplectic, adjoint-symplectic, Poisson, and
explicit-variant type~\cite{diele2020geometric}, and ask whether the
corresponding fully discrete maps preserve the Turing region of the underlying
continuous model. Our aim is not merely to derive efficient matrix-oriented
realizations of such methods, but to analyze how their fully discrete dynamics
modifies the onset of diffusion-driven instability.

This question is close to, but distinct from, existing studies of Turing
instability in discrete settings. Coupled map lattices define their own
discrete instability mechanisms, and the corresponding Turing regions are
intrinsic to the discrete model \cite{huang2019exploring}. Reaction--diffusion systems
on networks and, more generally, on discrete topologies provide another
important instance in which the discreteness belongs to the model itself,
rather than to the time integrator \cite{muolo2024turing}. Numerical studies of
reaction--diffusion pattern formation, including IMEX time discretizations,
have also analysed how time stepping affects the growth of unstable modes
\cite{ruuth1995implicit}. Here, however, the question is different: we ask whether the
fully discrete map induced by a time integrator preserves the Turing region of
the underlying continuous reaction--diffusion system. In this sense,
Turing-region preservation is treated as a qualitative property of the
continuous flow.

This perspective places the present study within the broader framework of
\emph{geometric numerical integration}, whose guiding principle is that a good
integrator should reproduce the qualitative features of the continuous flow~\cite{diele2020geometric,blanes2016concise, hairer2006geometric}.
This perspective is in line with the broader development of
structure-preserving discretizations for nonlinear parabolic problems, where
qualitative properties such as positivity, boundedness, entropy stability, or
long-time stability are built into the numerical scheme
\cite{antonietti2025structure}.  For pattern-forming reaction--diffusion systems, the qualitative feature that
matters most is the diffusion-driven instability itself. A faithful integrator
should reproduce, on the retained spatial spectrum, the same separation between
stable and unstable modes predicted by the continuous Turing polynomial.  From this point of view, the central
object is not a single stability threshold but the entire set of parameters
and modes for which the fully discrete map is unstable. Each time integrator
induces its own \emph{discrete Turing region}, which in general differs from
the continuous one: it may anticipate the instability on modes that are
continuously stable, or suppress modes that are continuously unstable. The
main message of this paper is therefore methodological: before a scheme is
used to interpret numerical patterns, its discrete Turing region must be
compared with the continuous one, because consistency alone does not guarantee
that the two coincide at the time steps actually used in long-time
simulations.

The present paper addresses the study of Turing-region preservation in several
directions. First, the analysis is formulated for a general class of
two-species reaction--diffusion systems on rectangular domains, in
matrix-oriented form. Second, it derives fully discrete instability criteria for
entire first-order families of splitting schemes, namely
\(\mathrm{IE}\!-\!\mathcal S\) and \(\mathrm{EX}\!-\!\mathcal S\), which treat the
diffusive flow by implicit Euler or exactly, respectively. Third, the resulting
Jury-based characterization separates the information inherited from the
continuous Turing mechanism from discrete-only effects, making it possible to
identify spurious anticipation, delayed detection, and homogeneous-mode
stability restrictions within a unified framework. Fourth, for the IMEX scheme,
whose first Jury condition reproduces the sign of the continuous Turing
polynomial exactly, this characterization yields an explicit sufficient
condition for Turing-region preservation, in which the role of the second Jury
condition is shown to be confined to the Turing-stable regime.

More precisely, after recalling the continuous and semi-discrete Turing
instability conditions, we develop a modal analysis of the fully discrete
matrix-oriented splitting schemes based on their amplification matrices. This
yields explicit Jury conditions written in terms of a generic modal parameter. We use the Gierer--Meinhardt system~\cite{Gierer1972} model to isolate three distinct numerical mechanisms:
faithful reproduction of a genuine Turing instability, suppression of a
continuous instability by an overly dissipative discrete map, and stable
spurious pattern formation generated only by the fully discrete dynamics.

The paper is organized as follows. In Section~\ref{sec:1} we introduce the
matrix-oriented splitting framework, and in
Section~\ref{sec:first_order_families} we describe the first-order
\(\mathrm{IE}\!-\!\mathcal S\) and \(\mathrm{EX}\!-\!\mathcal S\) families
considered in the sequel. Section~\ref{sec:motivating_examples} recalls the
continuous Turing instability and presents two
motivating Gierer--Meinhardt examples that illustrate the discrete pathologies
addressed in the paper. Sections~\ref{sec:turing}
and~\ref{sec:discrete_turing} develop the semi-discrete and fully
discrete Turing analyses, deriving the Jury conditions for the first-order
splitting families. Section~\ref{sec:numerical_investigation} specializes the
theory to the Gierer--Meinhardt benchmark and discusses the numerical
implications in terms of pattern detection, spurious stability, and stable
spurious pattern formation. Final remarks are collected in
Section~\ref{sec:conclusions}.
\section{Splitting schemes in matrix-oriented formulation}
\label{sec:1}

The matrix-oriented version of all the methods considered here splits
\eqref{eq:MOLmatricial} into the diffusive semiflow
\begin{equation*}
\left\{
\begin{array}{lll}
\dot U &=& T_{11}U+UT_{12},\\
\dot V &=& T_{21}V+VT_{22},
\end{array}
\right.
\end{equation*}
and the reaction semiflow
\begin{equation*}
\left\{
\begin{array}{lll}
\dot U &=& F(U,V),\\
\dot V &=& G(U,V).
\end{array}
\right.
\end{equation*}

\noindent
Let \(t_n=nh_t\), \(n=0,\dots,N_t\), with time step \(h_t>0\). If
\(\varphi_{h_t}^{[D]}\) and \(\varphi_{h_t}^{[R]}\) denote the exact
diffusive and reaction semiflows, then,  starting from \(U^0=U(0)\) and \(V^0=V(0)\),
\[
Y^{n+1}:=\varphi_{h_t}^{[D]}\circ\varphi_{h_t}^{[R]}(Y^n),\quad
Y^n=[U^n,V^n]^T,
\]
and its adjoint
\[
Y^{n+1}:=\varphi_{h_t}^{[R]}\circ\varphi_{h_t}^{[D]}(Y^n),\quad
Y^n=[U^n,V^n]^T,
\]
are first-order approximations of \eqref{eq:MOLmatricial}. Replacing one or
both exact subflows by a first-order approximation still yields a first-order
global procedure. Higher-order  schemes can then be obtained by
composition of methods and their adjoints.

\noindent
For example,   the first-order
IMEX method combines the explicit Euler method for the reaction semiflow with
the implicit Euler approximation of the diffusive semiflow:
\begin{equation*}
\begin{array}{ll}
U^{n+1}-U^{n_1}=h_t(T_{11}U^{n+1}+U^{n+1}T_{12}),
& U^{n_1}=U^n+h_tF(U^n,V^n),\\[0.8em]
V^{n+1}-V^{n_1}=h_t(T_{21}V^{n+1}+V^{n+1}T_{22}),
& V^{n_1}=V^n+h_tG(U^n,V^n).
\end{array}
\end{equation*}
Combining instead the explicit Euler method for the reaction semiflow with
the exact solution of the diffusive semiflow, we obtain the method
\(\mathrm{EX}\!-\!\mathrm{EE}\):
\begin{equation*}
\begin{array}{ll}
\displaystyle U^{n+1}=e^{h_tT_{11}} \, U^{n_1} \, e^{h_tT_{12}},
&\quad U^{n_1}-U^n=h_t\,F(U^{n},V^{n}), \\[0.6em]
\displaystyle V^{n+1}=e^{h_tT_{21}} \, V^{n_1} \, e^{h_tT_{22}},
&\quad V^{n_1}-V^n=h_t\,G(U^{n},V^{n}).
\end{array}
\end{equation*}

\section{First-order symplectic  families}
\label{sec:first_order_families}

In this section, we focus on methods based on reaction-diffusion splitting
with either implicit Euler or exact treatment of the diffusive flow. For
the reaction step we consider the first-order local integrators
\(\mathcal S \in \mathfrak S\), where $\mathfrak S$ is the set of first-order integrators listed in Table~\ref{tab:reaction_variants}:
Symplectic Euler (SE), Adjoint of Symplectic Euler (ASE), Poisson Euler
(PE), Adjoint of Poisson Euler (APE), together with their explicit variants
denoted by the prefix \(\mathrm{EV}\). Since the reaction semiflow is local,
all the following formulas are understood component-wise when applied to the
matrices \(U\) and \(V\).

\begin{table}[htbp]
\centering
\small
\renewcommand{\arraystretch}{1.5}
\begin{tabular}{p{0.14\textwidth}p{0.37\textwidth}p{0.37\textwidth}}
\hline
Method & \(u\)-update & \(v\)-update \\
\hline
\noalign{\vskip 2pt}
SE &
\(\displaystyle \frac{u^{n_1}-u^n}{h_t}=f(u^{n_1},v^n)\) &
\(\displaystyle \frac{v^{n_1}-v^n}{h_t}=g(u^{n_1},v^n)\) \\[1.2ex]
EVSE &
\(\displaystyle \frac{u^{n_1}-u^n}{h_t}=f(u^n,v^n)\) &
\(\displaystyle \frac{v^{n_1}-v^n}{h_t}=g(u^{n_1},v^n)\) \\[1.2ex]
ASE &
\(\displaystyle \frac{u^{n_1}-u^n}{h_t}=f(u^n,v^{n_1})\) &
\(\displaystyle \frac{v^{n_1}-v^n}{h_t}=g(u^n,v^{n_1})\) \\[1.2ex]
EVASE &
\(\displaystyle \frac{u^{n_1}-u^n}{h_t}=f(u^n,v^{n_1})\) &
\(\displaystyle \frac{v^{n_1}-v^n}{h_t}=g(u^n,v^n)\) \\[1.2ex]
PE &
\(\displaystyle u^{n_1}=u^n\exp\!\left(h_t\frac{f(u^{n_1},v^n)}{u^n}\right)\) &
\(\displaystyle v^{n_1}=v^n\exp\!\left(h_t\frac{g(u^{n_1},v^n)}{v^n}\right)\) \\[1.2ex]
EVPE &
\(\displaystyle u^{n_1}=u^n\exp\!\left(h_t\frac{f(u^n,v^n)}{u^n}\right)\) &
\(\displaystyle v^{n_1}=v^n\exp\!\left(h_t\frac{g(u^{n_1},v^n)}{v^n}\right)\) \\[1.2ex]
APE &
\(\displaystyle u^{n_1}=u^n\exp\!\left(h_t\frac{f(u^n,v^{n_1})}{u^n}\right)\) &
\(\displaystyle v^{n_1}=v^n\exp\!\left(h_t\frac{g(u^n,v^{n_1})}{v^{n_1}}\right)\) \\[1.2ex]
EVAPE &
\(\displaystyle u^{n_1}=u^n\exp\!\left(h_t\frac{f(u^n,v^{n_1})}{u^n}\right)\) &
\(\displaystyle v^{n_1}=v^n\exp\!\left(h_t\frac{g(u^n,v^n)}{v^n}\right)\) \\
\noalign{\vskip 2pt}
\hline
\renewcommand{\arraystretch}{0.5}
\end{tabular}
\caption{Local first-order approximations of the reaction semiflow.
When used in the matrix-oriented framework, the formulas are applied
component-wise to all grid nodes.}
\label{tab:reaction_variants}
\end{table}

\subsection{The family of \texorpdfstring{$\mathrm{IE}\!-\!\mathcal S$}{IE-R}
schemes}

Let \(Y^n=[U^n,V^n]^T\), and let
\[
(U^{n_1},V^{n_1})=\Phi_{h_t}^{[R,\mathcal S]}(U^n,V^n),\quad
\mathcal S\in\mathfrak S,
\]
where \(\Phi_{h_t}^{[R,\mathcal S]}\) denotes the componentwise reaction map
associated with one of the methods listed in
Table~\ref{tab:reaction_variants}. Then the first-order matrix-oriented
scheme with implicit Euler diffusion and reaction solver \(\mathcal S\) is
\begin{equation*}
\mathrm{IE}\!-\!\mathcal S:\quad
Y^{n+1}=\Phi_{h_t}^{[D,\mathrm{IE}]}\circ\Phi_{h_t}^{[R,\mathcal S]}(Y^n),
\end{equation*}
where the diffusive part is always treated through the pair of Sylvester
equations
\begin{equation*}
\Phi_{h_t}^{[D,\mathrm{IE}]}: \quad \begin{array}{l}
U^{n+1}-U^{n_1}=h_t(T_{11}U^{n+1}+U^{n+1}T_{12}),\\[0.4em]
V^{n+1}-V^{n_1}=h_t(T_{21}V^{n+1}+V^{n+1}T_{22}).
\end{array}
\end{equation*}

\paragraph{Example: \texorpdfstring{$\mathrm{IE-ASE}$}{IEASE}.}
\begin{equation*}
\begin{array}{ll}
U^{n+1}-U^{n_1}=h_t(T_{11}U^{n+1}+U^{n+1}T_{12}),
& U^{n_1}-U^n=h_tF(U^n,V^{n_1}),\\[0.8em]
V^{n+1}-V^{n_1}=h_t(T_{21}V^{n+1}+V^{n+1}T_{22}),
& V^{n_1}-V^n=h_tG(U^n,V^{n_1}).
\end{array}
\end{equation*}

\paragraph{Example: \texorpdfstring{$\mathrm{IE-SE}$}{IESE}.}
\begin{equation*}
\begin{array}{ll}
U^{n+1}-U^{n_1}=h_t(T_{11}U^{n+1}+U^{n+1}T_{12}),
& U^{n_1}-U^n=h_tF(U^{n_1},V^n),\\[0.8em]
V^{n+1}-V^{n_1}=h_t(T_{21}V^{n+1}+V^{n+1}T_{22}),
& V^{n_1}-V^n=h_tG(U^{n_1},V^n).
\end{array}
\end{equation*}

\subsection{The family of \texorpdfstring{$\mathrm{EX}\!-\!\mathcal S$}{EX-R}
schemes}

For any \(\mathcal S\in\mathfrak S\), we define
\begin{equation*}
\mathrm{EX}\!-\!\mathcal S:\quad
Y^{n+1}=\varphi_{h_t}^{[D]}\circ\Phi_{h_t}^{[R,\mathcal S]}(Y^n),
\end{equation*}
where
\(
\varphi_{h_t}^{[D]}(U,V)=
\bigl(e^{h_tT_{11}}Ue^{h_tT_{12}},e^{h_tT_{21}}Ve^{h_tT_{22}}\bigr).
\)

\paragraph{Example: \texorpdfstring{$\mathrm{EX-ASE}$}{EXASE}.}
\begin{equation*}
\begin{array}{ll}
\displaystyle U^{n+1}=e^{h_tT_{11}} \, U^{n_1} \, e^{h_tT_{12}},
&\quad U^{n_1}-U^n=h_t\,F(U^{n},V^{n_1}), \\[0.6em]
\displaystyle V^{n+1}=e^{h_tT_{21}} \, V^{n_1} \, e^{h_tT_{22}},
&\quad V^{n_1}-V^n=h_t\,G(U^{n},V^{n_1}).
\end{array}
\end{equation*}
For \(\mathrm{EX}\!-\!\mathcal S\) schemes, when the problem dimension is
large, the exact computation of matrix exponentials may be expensive, and
approximate low-rank strategies can be considered~\cite{mena2018numerical}. However, such
approximations may bias the selection of unstable modes during transient
pattern formation.

\section{Continuous Turing Instability}
\label{sec:motivating_examples}
We consider the prototypical form of \eqref{eq:model_intro}, namely the reaction--diffusion system with constant diffusions $D_u, D_v >0$:
\begin{equation}
\label{eq:general_hu}
\begin{cases}
u_t = D_u \Delta u + f(u,v),\\
v_t = D_v \Delta v + g(u,v),
\end{cases}
\end{equation}
where $\Delta$ denotes the Laplacian, together with a spatially homogeneous equilibrium \((u^*,v^*)\) whose Jacobian is
\begin{equation}\label{eq:Jstar}
J^*=
\begin{pmatrix}f_u & f_v\\ g_u & g_v\end{pmatrix}_{\!(u^*,v^*)}.
\end{equation}
The equilibrium is assumed stable in the absence of diffusion (kinetic stability):
\begin{equation}
\label{eq:conditions_lin}
\tau^*:=\operatorname{tr}(J^*)=f_u+g_v<0,\qquad
\delta^*:=\det(J^*)=f_ug_v-f_vg_u>0.
\end{equation}
Linearizing \eqref{eq:general_hu} about \((u^*,v^*)\) and expanding the perturbation over the eigenfunctions of the Laplacian, $-\Delta\varphi=\mu\varphi$ with $\mu\ge 0$, each mode evolves independently according to
\begin{equation}
\label{eq:continuous_modal}
\mathbf w_t=J_\mu\mathbf w,\qquad
J_\mu=J^*-\mu\,D,\qquad
D=\begin{pmatrix}D_u & 0\\ 0 & D_v\end{pmatrix},
\end{equation}
with characteristic polynomial
\begin{equation}
\label{eq:charpoly_cont}
\lambda^2+\bigl((D_u+D_v)\mu-(f_u+g_v)\bigr)\lambda+h(\mu)=0,
\end{equation}
where
\begin{equation}
\label{eq:h_cont}
h(\mu)=D_uD_v\,\mu^2-(D_ug_v+D_vf_u)\,\mu+(f_ug_v-f_vg_u).
\end{equation}
The mode $\mu=0$ is the spatially homogeneous one, stable by \eqref{eq:conditions_lin}.

Under \eqref{eq:conditions_lin}, a Turing instability occurs if and only if $h(\mu)<0$ for some $\mu>0$. Since $D_uD_v>0$ and $\delta^*>0$, the convex parabola $h$ takes negative values for some $\mu>0$ if and only if
\begin{equation}
\label{eq:algebraic_continum}
m:=D_ug_v+D_vf_u>0
\qquad\text{and}\qquad
m^2>4\,D_uD_v\,\delta^* .
\end{equation}

\begin{remark}
\label{rem:opposite_signs_eq}
By \eqref{eq:algebraic_continum}, Turing instability requires $m=D_u g_v + D_v f_u >0$.
Hence, $f_u$ and $g_v$ must have opposite signs, namely 
\begin{equation}
\label{eq:cond_fugv_neg}
    f_u\, g_v<0 .
\end{equation}
\end{remark}

\subsection{The Gierer--Meinhardt  benchmark model} We focus on the Gierer--Meinhardt activator--inhibitor system
\begin{equation}
\label{eq:gm_model_motivating}
\begin{cases}
u_t = D_u\,\Delta u + \gamma\left(a+\dfrac{u^2}{v}-c\,u\right),\\[6pt]
v_t = D_v\,\Delta v + \gamma\left(u^2-b\,v\right),
\end{cases}
\end{equation}
with homogeneous Neumann boundary conditions. The positive spatially
homogeneous equilibrium is
\begin{equation}
\label{eq:gm_equilibrium_motivating}
P_e=(u_e,v_e)
=
\left(
\dfrac{a+b}{c},
\dfrac{(a+b)^2}{c^2b}
\right)
\end{equation}
and the Jacobian of the reaction kinetics at \(P_e\) is
\begin{equation}
\label{eq:gm_jacobian_motivating}
J^*
=
\begin{pmatrix}
\displaystyle \gamma\,\dfrac{c(b-a)}{a+b}
&
\displaystyle -\gamma\,\dfrac{c^2b^2}{(a+b)^2}
\\[8pt]
\displaystyle \dfrac{2\gamma(a+b)}{c}
&
-\gamma b
\end{pmatrix}.
\end{equation}


We adopt the tensor-product finite-difference semi-discretization with
homogeneous Neumann boundary conditions described in~\cite{d2019matrix}.
More precisely, we consider the second-order ECDF discretization on
$\Omega=[0,\ell_x]\times[0,\ell_y]$, with $N_x$ and $N_y$ interior
nodes and mesh sizes
\[
  h_x=\frac{\ell_x}{N_x+1},\qquad h_y=\frac{\ell_y}{N_y+1}.
\]
The one-dimensional second-derivative matrices are
\[
  T_x = \operatorname{tridiag}(1,-2,1)+B_x,\qquad
  T_y = \operatorname{tridiag}(1,-2,1)+B_y,
\]
where $B_x$, $B_y$ are the ECDF boundary correction matrices enforcing
the Neumann conditions.  The resulting matrix-oriented semi-discretization
of system~\eqref{eq:model_intro} is
\begin{equation*}
\left\{
\begin{array}{l}
\dot U
=
D_u\!\left(\dfrac{T_x}{h_x^2}U+\dfrac{U\,T_y^T}{h_y^2}\right)+F(U,V),
\\[6pt]
\dot V
=
D_v\!\left(\dfrac{T_x}{h_x^2}V+\dfrac{V\,T_y^T}{h_y^2}\right)+G(U,V),
\end{array}
\right.
\end{equation*}
where $F(U,V)_{i,j}= \gamma \left(a+\dfrac{U_{i,j}^2}{V_{i,j}}-c\,U_{i,j}\right)$ and $G(U,V)_{i,j}=\gamma\left(U_{i,j}^2-b\,V_{i,j}\right)$.

\paragraph{Turing patterns: vectorial vs. matrix-oriented  application.} To assess the computational advantages of the matrix-oriented formulation, we compare the vectorial and matricial implementations of three representative schemes, namely IMEX, IE--SE and IE--EVSE, on the Gierer--Meinhardt problem with parameters
$$D_u=1,\qquad D_v=50,\qquad a=1,\qquad b=2,\qquad c=4,\qquad \gamma=10.$$
The simulations are performed on the same spatial domain $[0,10]$ with $n_x = n_y = 100
$ interior nodes and from the same initial random perturbation of the homogeneous equilibrium.
Since the objective is to compare the computational effort required to reach the stationary patterned state, we employ a stopping criterion based on the increment of the unknown $U$, that is the Frobenius norm of the difference between two consecutive numerical solutions. Specifically, the time integration is terminated as soon as
$\|U^{k+1}-U^{k}\|_F < 10^{-10},$
where $\|\cdot\|_F$ denotes the Frobenius norm. This criterion is commonly adopted in the reaction--diffusion literature \cite{AMS2023,Alla_pDMD,monti2025socpatterns}, as an indicator that the numerical solution has approached a stationary state.

\begin{figure}[htbp]
    \centering
\includegraphics[width=0.33\linewidth]{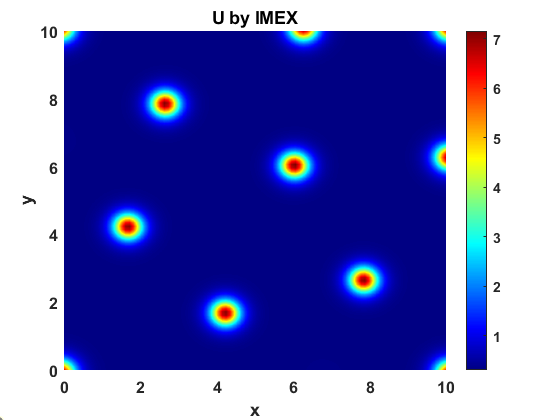}
\includegraphics[width=0.33\linewidth]{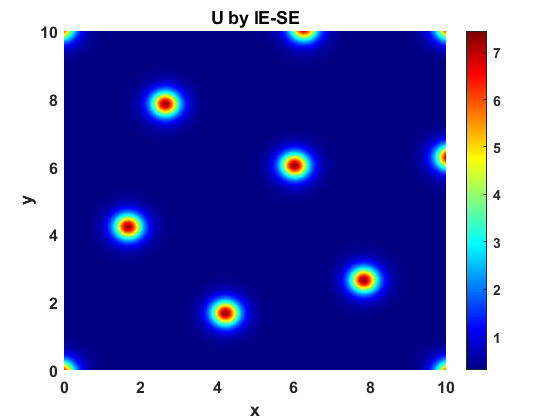}
\includegraphics[width=0.33\linewidth]{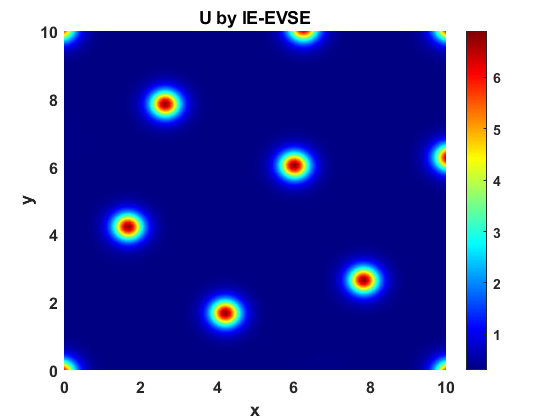}
    \caption{Stationary spot-like patterns obtained with the matrix-oriented formulation of the IMEX,
IE--SE and IE--EVSE schemes for the Gierer--Meinhardt model. The solution is shown at the final time $T$ determined by the stopping criterion and reported in Table \ref{tab:cfr_matrixvec}.}
    \label{fig:pattern_matrix}
\end{figure}

In Figure \ref{fig:pattern_matrix}, we report the spatial patterns obtained with the IMEX, IE--SE and IE--EVSE schemes. Despite the different
time discretizations, all methods converge to the same stable spot pattern. This confirms that the three schemes capture the same qualitative
Turing dynamics and therefore provide a suitable benchmark for comparing
their computational efficiency.

To quantify the computational gain, Table~\ref{tab:cfr_matrixvec} reports, for each scheme, the final simulation time $T$ (determined by the stopping criterion), the CPU time required to reach stationarity, and the corresponding speed-up factor with respect to the vector formulation. The comparison highlights the efficiency of the matrix-oriented implementation, which avoids the explicit assembly of large Kronecker-product matrices and exploits the intrinsic tensor-product structure of the discretized diffusion operators.

\begin{table}[htbp]
\centering
\begin{tabular}{lccc}
\hline
Method & $T$ & CPU time (s) & Speed-up factor \\
\hline
IMEX & $2.6386 \times 10^3$ & $279.42$ & $251.09$\\
IE--SE & $2.7476 \times 10^3$ & $515.26$ & $142.37$\\
IE--EVSE & $2.6282 \times 10^3$ & $281.34$ & $248.15$\\
\hline
\renewcommand{\arraystretch}{0.5}
\end{tabular}
\caption{Final simulation time, CPU time and speed-up factor for the matrix-oriented implementations of the considered schemes.}
\label{tab:cfr_matrixvec}
\end{table}

\subsection{Motivating examples}

The two examples below illustrate two opposite fully discrete pathologies.
In the first one, the continuous model is Turing stable, yet IMEX generates a
stable spurious numerical pattern. In the second one, the system lies in a Turing-unstable regime in the continuous setting, but the IE--ASE family spuriously suppresses the pattern.

\subsubsection{Example 1: A stable spurious IMEX pattern in a stable regime}
\label{subsec:motivating_imex_spurious}

We first consider a parameter set for which the continuous
Gierer--Meinhardt system is kinetically stable and Turing stable. The
parameters are
\begin{equation}
\label{eq:imex_spurious_parameters}
D_u=1,
\qquad
D_v=12,
\qquad
\gamma=10,\qquad a=2,
\qquad
b=0.15,
\qquad
c=0.4 .
\end{equation}
For these values the homogeneous equilibrium is
$(u_e,v_e)=(5.375,192.604)$, with
\[
J^*=\begin{pmatrix} -3.4419 & -0.0078\\ 107.5 & -1.5 \end{pmatrix},
\qquad
\tau^*=-4.9419<0,
\qquad
\delta^*=6>0,
\]
so the equilibrium is kinetically stable. Moreover
\[
D_ug_v+D_vf_u=-42.80<0,
\qquad\text{hence}\qquad
h(\mu)\ge\delta^*=6>0
\quad\text{for all } \mu\ge 0,
\]
and the continuous problem has no diffusion-driven instability.

Nevertheless, for the IMEX scheme, namely IE--EE, with
\(
h_t=h_t^{(1)}=0.67,
\)
which is below the IMEX homogeneous-mode threshold
$\bar h_{\mathrm{hom}}^{\mathrm{EE}}=0.7154$, the fully discrete dynamics
exhibits a spurious non-homogeneous destabilization
($\max_\mu\rho(\mu)\approx1.04>1$), and the nonlinear discrete solution evolves
toward a bounded patterned numerical state. By contrast, the SE/PE-type schemes
damp the same perturbation and return to the homogeneous equilibrium.

Figure~\ref{fig:ex1_imex_spurious_combined} summarizes the phenomenon. The
left panel shows the final spatial profiles. IMEX produces a stable
non-homogeneous numerical pattern, while the SE/PE-type schemes do not. 
The right panel reports the evolution of the spatial mean of $U$. The SE-type schemes preserve the homogeneous equilibrium, yielding a constant spatial mean throughout the simulation. In contrast, IE--EE (IMEX) undergoes a destabilization and converges to a nonhomogeneous steady state corresponding to a stable pattern. The EX--EE scheme exhibits an unbounded growth of the spatial mean, reflecting the onset of an unstable transient pattern. The solution shown in the left panel corresponds to the last time step before the method breaks down, as the numerical solution loses positivity.

\begin{figure}[htbp]
\centering
\begin{subfigure}[t]{0.47\textwidth}
    \centering \includegraphics[width=6.5cm]{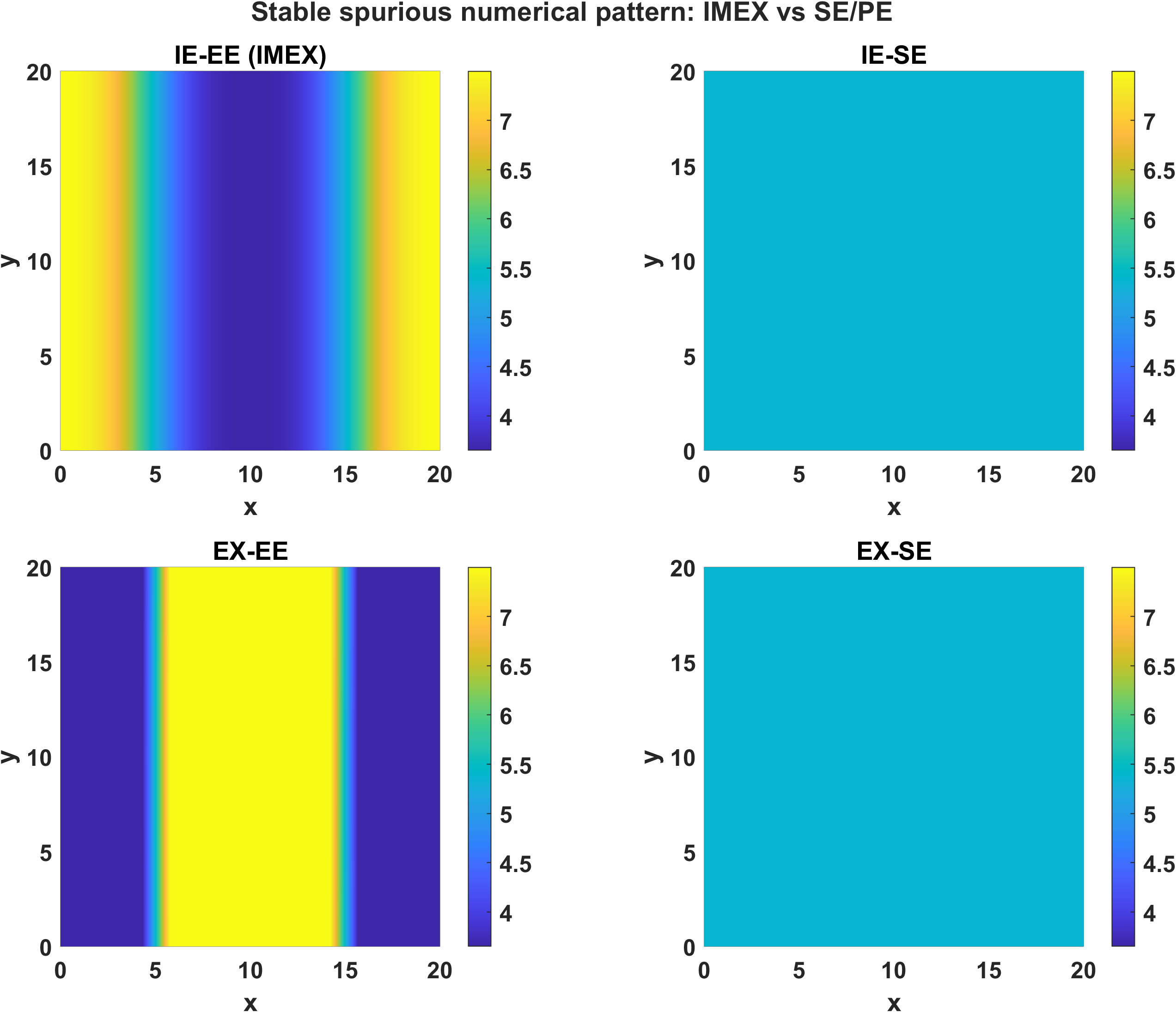}
 \caption{Final spatial profiles.}
  \label{fig:ex1_imex_spurious_patterns}
\end{subfigure}
\hfill
\begin{subfigure}[t]{0.47\textwidth}
    \centering
    \includegraphics[width=\textwidth]{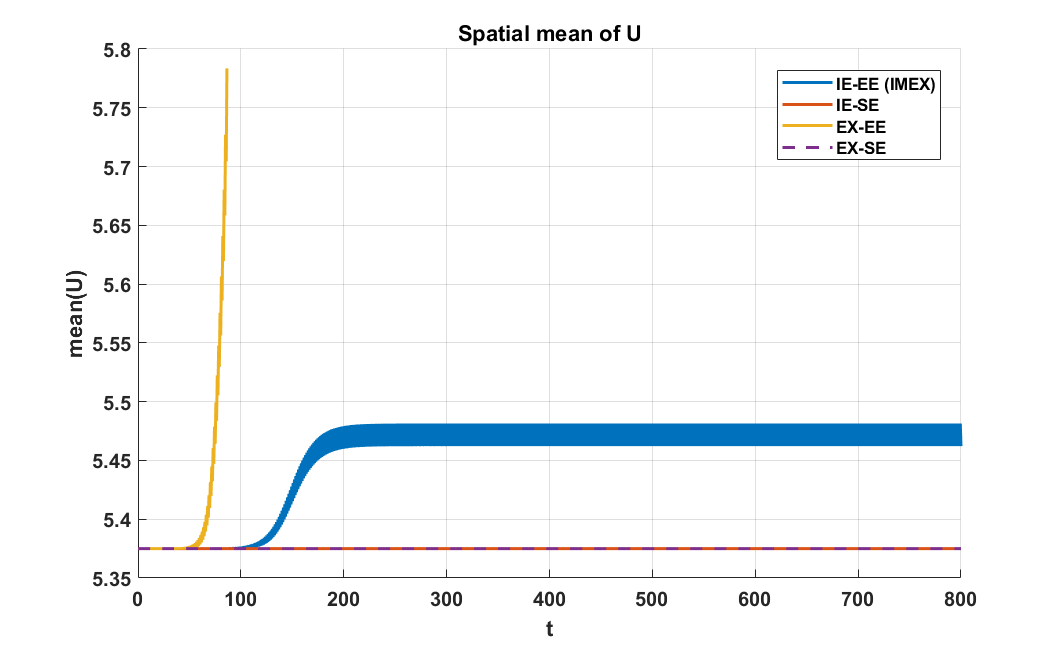}
    \caption{Spatial mean of the solution.}
    \label{fig:ex1_imex_spurious_rms}
\end{subfigure}
\caption{
Stable spurious numerical pattern generated by IMEX in a Turing-stable regime for the Gierer--Meinhardt model. The continuous model satisfies \(h(\mu)>0\) for all
\(\mu\ge0\), so no Turing instability is present. Nevertheless,
IE--EE (IMEX) amplifies a non-homogeneous discrete mode and saturates to a
bounded spatially patterned numerical state. In contrast, the SE/PE-type
schemes damp the same perturbation and return to the homogeneous equilibrium.
}
\label{fig:ex1_imex_spurious_combined}
\end{figure}


\subsubsection{Example 2: {Spurious stability of IE--ASE in a  Turing regime.}}
\label{subsec:motivating_iease_spurious_stability}

We now choose parameters for which the continuous Gierer--Meinhardt model exhibits a Turing instability. The parameters are
\begin{equation}
\label{eq:iease_spurious_parameters}
D_u=1,
\qquad
D_v=160,
\qquad
\gamma=5, \qquad a=0.6933,
\qquad
b=2,
\qquad
c=1.
\end{equation}
For these values the equilibrium is kinetically stable, while
\(
h(\mu)<0
\)
on a non-empty interval of positive modes. Hence the continuous model
predicts a diffusion-driven instability.

At the time step
\(
h_t=h_t^{(2)}=0.05,
\)
the IMEX scheme detects the continuous Turing instability and produces the
expected non-homogeneous pattern. However, the IE--ASE scheme, which
corresponds to the implicit-symplectic (IMSP) scheme introduced
in~\cite{diele2017numerical}, behaves differently: instead of
amplifying the unstable modes, it suppresses them and drives the solution
back toward the homogeneous equilibrium. Figure~\ref{fig:ex2_iease_spurious_stability_combined} summarizes the
comparison: the left panel shows that IMEX produces the expected Turing
pattern, whereas IE--ASE and EX--ASE suppress it. 

The right panel confirms the same mechanism through the evolution of the spatial mean of $U$. The IE–ASE and EX–ASE schemes maintain the homogeneous equilibrium, resulting in a constant spatial mean over time. On the contrary, the IE–EE (IMEX) method departs from the initial equilibrium and settles to a lower constant value, indicating convergence toward a different steady configuration. The EX–EE scheme instead shows a rapid transient followed by a stabilization at an intermediate level, highlighting a distinct dynamical behaviour compared to the other methods. 

\begin{figure}[htbp]
\centering
\begin{subfigure}[t]{0.47\textwidth}
    \centering
    \includegraphics[width=6.5cm]{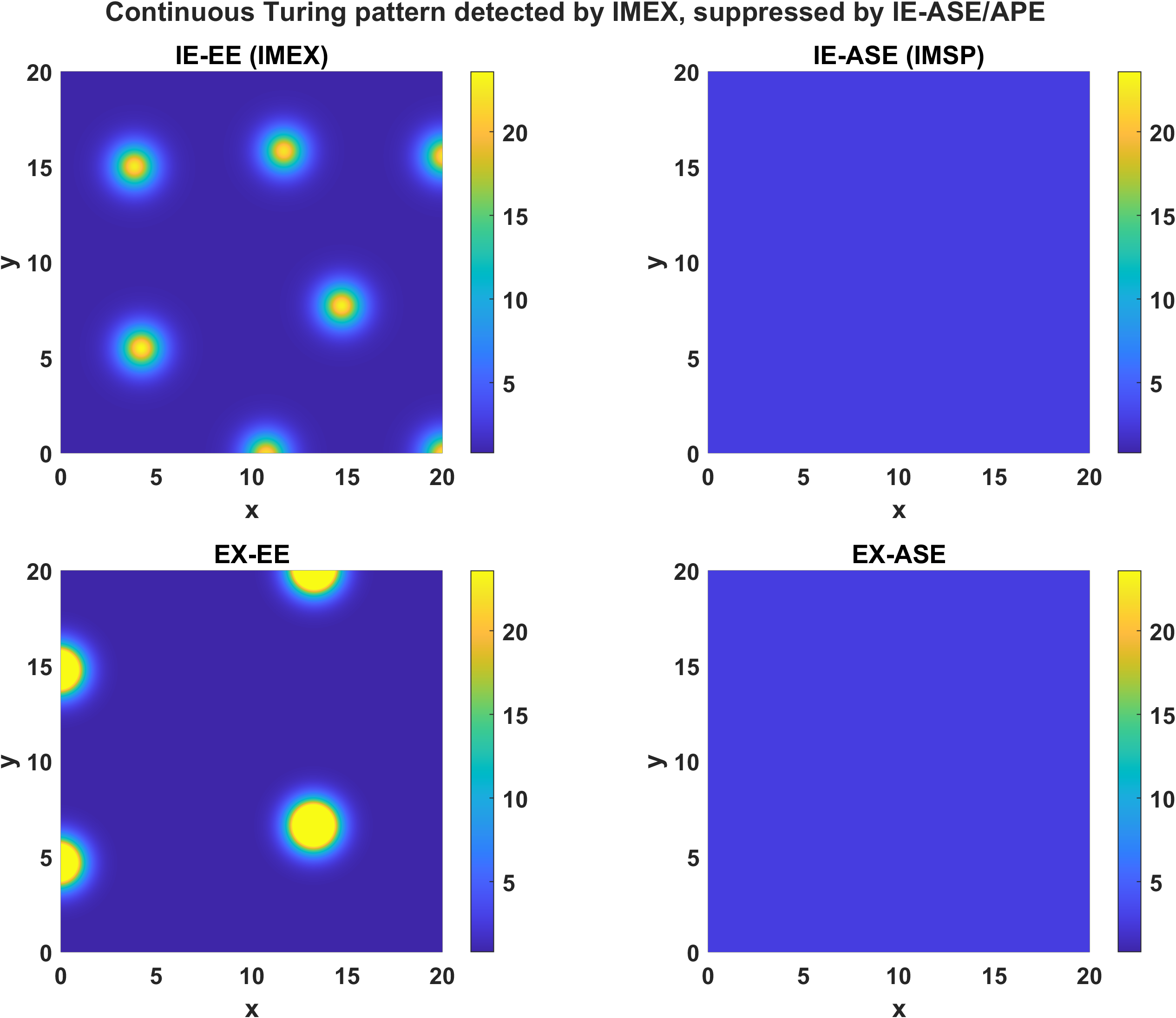}
    \caption{Final spatial profiles.}
\label{fig:ex2_iease_spurious_stability_patterns}
\end{subfigure}
\hfill
\begin{subfigure}[t]{0.47\textwidth}
    \centering
\includegraphics[width=1\textwidth]{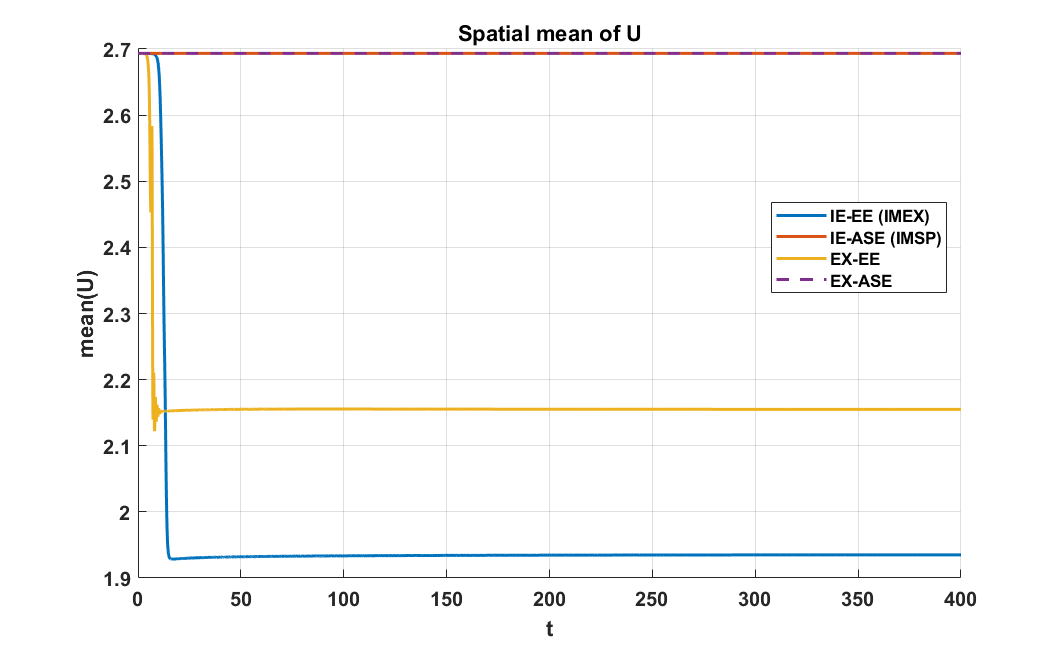}
    \caption{Spatial mean of the solution.}
    \label{fig:ex2_iease_spurious_stability_rms}
\end{subfigure}
\caption{
Spurious stability of the ASE/APE family in a continuous Turing
regime. The continuous Gierer--Meinhardt model is Turing unstable, and IMEX
detects the instability by producing a patterned state. In contrast,
IE--ASE/APE and EX--ASE/APE suppress the unstable modes and drive the
solution back toward the homogeneous equilibrium, as confirmed by the constant spatial mean shown in panel (b). 
}
\label{fig:ex2_iease_spurious_stability_combined}
\end{figure}

Taken together, the two examples show that the time discretization may
distort the continuous Turing mechanism in two opposite directions. IMEX may
create a stable patterned numerical state in a Turing-stable regime, while
the ASE/APE family may suppress Turing growth in an
unstable regime predicted by the continuous model. This motivates the modal Jury analysis developed in the next
sections.

\section{ Discrete Turing instability of one-step integrators}
\label{sec:turing}
A spatial semi-discretization of \eqref{eq:model_intro} on a
tensor-product grid leads to the system in Eq.~\eqref{eq:MOL_intro},
where $A_u, A_v$ are defined in \eqref{eq:A_intro}, with $T_{11} = D_u T_x$, $T_{12} = D_u T_y$, $T_{21} = D_v T_x$, $T_{22} = D_v T_y$. By the tensor-product structure in \eqref{eq:A_intro}, $A_u$ and $A_v$ are simultaneously diagonalized by the common eigenbasis of the one-dimensional second-derivative matrices $T_x, T_y$. If $\lambda_i^x$ and $\lambda_j^y$ denote the eigenvalues of $T_x$ and $T_y$, the associated diffusion eigenvalues are the grid modes
\begin{equation}
\label{eq:mu_modes}
\mu_{ij}=-(\lambda_i^x+\lambda_j^y)\geq0,
\end{equation}
which we generically denote by $\mu\ge0$. Projecting the linearized dynamics onto each such mode decouples it into independent two-dimensional blocks indexed by $\mu$. Applying a one-step
method with time step $h_t$ to this system replaces the continuous semiflow by
a discrete map
\begin{equation*}
z^{n+1} = \Phi(z^n; h_t, \mu),
\end{equation*}
where $\Phi$ is the numerical flow operator generated by the one-step scheme.
The stability of the homogeneous equilibrium under the discrete scheme is controlled
by the spectral radius of the two-dimensional amplification matrix
\begin{equation*}
G_{h_t}(\mu) := \bigl.\mathrm{D}\Phi\bigr|_{(u^*,v^*)},
\end{equation*}
the $2\times2$ block obtained by projecting the Jacobian of the discrete map, evaluated at the equilibrium, onto the eigenspace of mode $\mu$. The
equilibrium is linearly stable on mode $\mu$ if and only if the spectral radius satisfies
$\rho(G_{h_t}(\mu)) < 1$.
\begin{proposition}[Jury stability criterion \cite{periyasamy2015determinant}, \cite{jury1978stability}]
\label{prop:jury_turing}
Let \(\tau_{h_t}(\mu)=\operatorname{tr}(G_{h_t}(\mu))\) and
\(\delta_{h_t}(\mu)=\det(G_{h_t}(\mu))\). Then
\(\rho(G_{h_t}(\mu))<1\) if and only if
\begin{equation}
\label{eq:Jury_cond}
\mathcal J_1(\mu)>0,\quad\mathcal J_2(\mu)>0,\quad\mathcal J_3(\mu)>0,
\end{equation}
where
\begin{equation}
\label{eq:Jury}
\mathcal J_1(\mu)=1-\tau_{h_t}(\mu)+\delta_{h_t}(\mu),\quad
\mathcal J_2(\mu)=1+\tau_{h_t}(\mu)+\delta_{h_t}(\mu),\quad
\mathcal J_3(\mu)=1-\delta_{h_t}(\mu),\qquad \mu \geq 0.
\end{equation}
\end{proposition}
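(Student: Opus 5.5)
Since $G_{h_t}(\mu)$ is a real $2\times 2$ matrix, its spectrum is the root set of the real characteristic polynomial
\[
p(z)=z^2-\tau z+\delta ,
\]
where we write $\tau=\tau_{h_t}(\mu)$ and $\delta=\delta_{h_t}(\mu)$. The claim $\rho(G_{h_t}(\mu))<1$ is therefore equivalent to both roots of $p$ lying in the open unit disk, and the proof reduces to the classical Schur--Cohn/Jury test for quadratics. We identify $\mathcal J_1=p(1)$ and $\mathcal J_2=p(-1)$; since $\delta=z_1z_2$, the product of the roots, $\mathcal J_3=1-\delta$ is just $1-z_1z_2$. We then prove the two implications separately, splitting according to the sign of the discriminant $\tau^2-4\delta$.

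\emph{Necessity.} Assume $|z_1|,|z_2|<1$. Then $|\delta|=|z_1||z_2|<1$, so $\mathcal J_3>0$. For $\mathcal J_1,\mathcal J_2$ we write
\[
p(\pm1)=(\pm1-z_1)(\pm1-z_2).
\]
\begin{itemize}
\item If the roots are complex conjugate, $p(\pm1)=|\pm1-z_1|^2>0$.
\item If the roots are real, each factor $\pm1-z_i$ has the sign of $\pm1$, so the product is positive.
\end{itemize}

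\emph{Sufficiency.} Assume $\mathcal J_1,\mathcal J_2,\mathcal J_3>0$. Adding the first two conditions gives $1+\delta>|\tau|$, which forces $\delta>-1$. Together with $\mathcal J_3$ we get $|\delta|<1$.
\begin{itemize}
\item \emph{Complex roots} ($\tau^2<4\delta$): then $|z_1|^2=\delta<1$ and we are done.
\item \emph{Real roots}: $p$ is an upward parabola with vertex at $\tau/2$. Since $|\tau|<1+\delta<2$, the vertex lies in $(-1,1)$. Combined with $p(-1)>0$ and $p(1)>0$, the intermediate value theorem and the convexity of $p$ place both roots in $(-1,1)$: $p$ is positive at both endpoints, so neither root can lie outside $[-1,1]$ on the side beyond a positive endpoint value without a sign change there.
\end{itemize}

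The main obstacle is the real-root case of sufficiency. To make it airtight, we argue as follows. A root $z\ge1$ would require the vertex to satisfy $\tau/2\ge1$, or else $p(1)\le0$. The former contradicts $|\tau|<2$ and the latter contradicts $\mathcal J_1>0$. The case $z\le-1$ is symmetric, using $\mathcal J_2>0$.

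Finally, since all quantities depend on $\mu$ only through the entries of $G_{h_t}(\mu)$, the equivalence holds pointwise for each mode $\mu\ge0$.
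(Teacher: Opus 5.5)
Your proof is correct. It necessarily differs from the paper, which gives no argument of its own: the paper simply cites the classical Jury (Schur--Cohn) test, a general criterion for real polynomials of arbitrary degree, specialized here to the quadratic $z^2-\tau_{h_t}(\mu)z+\delta_{h_t}(\mu)$.

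You instead verify the degree-two case directly. For necessity you use the factorization $p(\pm1)=(\pm1-z_1)(\pm1-z_2)$. For sufficiency you use $|z_1|^2=\delta$ in the complex case, and in the real case the location of the vertex $\tau/2\in(-1,1)$ together with the monotonicity of $p$ on either side of it.

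Your route has two concrete benefits.

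\begin{itemize}
\item The paper's third condition is $1-\delta>0$ rather than the textbook $|\delta|<1$. Your observation that $\mathcal J_1,\mathcal J_2>0$ already force $\delta>-1$ is exactly what justifies this reduced form. (Strictly, combining the two inequalities gives $1+\delta>|\tau|$, while merely adding them gives $1+\delta>0$; either suffices.)
\item Your identifications $\mathcal J_1=p(1)=\det\bigl(I-G_{h_t}(\mu)\bigr)$ and $\mathcal J_2=p(-1)=\det\bigl(I+G_{h_t}(\mu)\bigr)$ have a clear spectral meaning. Losing $\mathcal J_1$ means a real eigenvalue crosses $+1$. Losing $\mathcal J_2$ means one crosses $-1$, a flip with no continuous-time analogue. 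This is the structural reason behind Remark~\ref{rem:why_jury}.
\end{itemize}

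For IMEX, the second identification even yields Remark~\ref{prop:EE_J1_structure} in one line. Indeed $I-B_{h_t}^{\mathrm{IE}}(\mu)A_{h_t}^{\mathrm{EE}}=-h_t\,B_{h_t}^{\mathrm{IE}}(\mu)\,J(\mu)$, and $\det J(\mu)=h(\mu)$.

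What the citation buys in exchange is only generality to higher-degree characteristic polynomials, which the $2\times2$ modal setting does not need.
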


In the rest of the paper, we refer to $\mathcal J_i(\mu)$, $i=1,2,3$, defined in \eqref{eq:Jury}, as the Jury quantities, and to the inequalities $\mathcal J_i(\mu)>0$ \eqref{eq:Jury_cond} as the corresponding Jury conditions.

\begin{remark}
\label{rem:why_jury}
Linear stability of the $2\times2$ map $G_{h_t}(\mu)$ could equivalently be
expressed through the Schur conditions directly on its trace and determinant,
namely $|\tau_{h_t}(\mu)|<1+\delta_{h_t}(\mu)$ and $\delta_{h_t}(\mu)<1$.
We deliberately adopt the Jury form \eqref{eq:Jury} because the three quantities
$\mathcal J_1,\mathcal J_2,\mathcal J_3$ separate the stability requirement into
mechanisms with distinct physical content. As shown in
Proposition~\ref{prop:continuous_interpretation}, the first quantity
$\mathcal J_1$ carries, in the limit $h_t\to0$, exactly the sign of the
continuous Turing polynomial $h(\mu)$: its loss of positivity is the fully discrete
counterpart of the diffusion-driven instability of the underlying PDE.
The second quantity $\mathcal J_2$, on the contrary, has no continuous
counterpart and encodes a purely discrete stability constraint introduced by
the time integrator. A violation of $\mathcal J_2 >0$ therefore signals an
instability that is spurious with respect to the continuous problem. The Jury
formulation thus lets us isolate the Turing mechanism inherited from the continuous model,
encoded in $\mathcal J_1$, from purely discrete artefacts, encoded in
$\mathcal J_2$, a distinction that the raw trace--determinant inequalities
would not make transparent.
\end{remark}
\begin{definition}[Fully discrete Turing instability]
\label{def:fully_discrete_turing}
A one-step integrator exhibits a fully discrete Turing instability if
$\rho(G_{h_t}(0))<1$
and there exists \(\mu>0\) such that
$\rho(G_{h_t}(\mu))>1$.
Equivalently, the Jury conditions are satisfied at the homogeneous mode
\(\mu=0\), while for some \(\mu>0\) at least one of the Jury conditions is
strictly violated.
\end{definition}

\section{Discrete Turing instability of the IE--$\mathcal S$ and EX--$\mathcal S$ families}
\label{sec:discrete_turing}
%
Before providing  conditions for  fully-discrete Turing instability for the 
 $\mathrm{IE}\!-\!\mathcal S$ and $\mathrm{EX}\!-\!\mathcal S$ families of 
Section~\ref{sec:first_order_families}, we first derive their explicit formulas for $G_{h_t}(\mu)$.

\begin{lemma}
\label{lem:general_modal_matrix}
Consider the modal system \eqref{eq:continuous_modal} evaluated on the discrete mode $\mu=\mu_{ij}\ge0$ of \eqref{eq:mu_modes}.
 Then the one-step amplification for the 
 $\mathrm{IE}\!-\!\mathcal S$ and $\mathrm{EX}\!-\!\mathcal S$ schemes with \(\mathcal S \in \mathfrak S\) is given by 
\begin{equation*}
G_{h_t}^{\mathcal D-\mathcal S}(\mu)=B_{h_t}^{\mathcal D}(\mu)\,A_{h_t}^{\mathcal S},
\end{equation*}
where $A_{h_t}^{\mathcal S}$ denotes the Jacobian of the linearized
reaction substep at \((u^*,v^*)\) and  $B_{h_t}^{\mathcal D}(\mu)=\operatorname{diag}(b_u^{\mathcal D}(\mu),b_v^{\mathcal D}(\mu))$ where
\begin{equation}
\label{eq:diffusion_factors}
(b_u^{\mathcal D}, b_v^{\mathcal D}) = 
\begin{cases}
(b_u^{\mathrm{IE}}, b_v^{\mathrm {IE}}) := \displaystyle\left(\frac{1}{1+h_t D_u \mu}, 
\frac{1}{1+h_t D_v \mu}\right) 
& \text{for }\mathrm{IE}\!-\!\mathcal{S}\text{ schemes},\\[8pt]
(b_u^{\mathrm{EX}}, b_v^{\mathrm{EX}}) := \left(e^{-h_t D_u \mu}, e^{-h_t D_v \mu}\right) 
& \text{for }\mathrm{EX}\!-\!\mathcal{S}\text{ schemes}.
\end{cases}
\end{equation}
\end{lemma}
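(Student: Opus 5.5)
The plan is to linearize the composite map $\Phi=\Phi^{[D]}_{h_t}\circ\Phi^{[R,\mathcal S]}_{h_t}$ at the homogeneous equilibrium and apply the chain rule. Since the equilibrium $(u^*,v^*)$ is a fixed point of the reaction map (for every $\mathcal S\in\mathfrak S$, $f(u^*,v^*)=g(u^*,v^*)=0$ gives $(u^{n_1},v^{n_1})=(u^*,v^*)$, including the Poisson variants because the exponent vanishes) and also of the diffusive map (constants lie in the kernel of the Neumann discrete Laplacians), we get
\[
\mathrm D\Phi\big|_{(u^*,v^*)}=\mathrm D\Phi^{[D]}_{h_t}\cdot \mathrm D\Phi^{[R,\mathcal S]}_{h_t}\big|_{(u^*,v^*)}.
\]
The proof then consists of identifying each factor after projection onto mode $\mu$ and checking that the projection commutes with the product.

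\emph{Reaction factor.} Because the reaction map acts componentwise on grid nodes, its Jacobian at the homogeneous state is $A^{\mathcal S}_{h_t}\otimes I_{N_xN_y}$. Here $A^{\mathcal S}_{h_t}$ is the $2\times 2$ Jacobian of the local scalar map at $(u^*,v^*)$, obtained by implicit differentiation of the relations in Table~\ref{tab:reaction_variants}. For instance, SE gives $\delta u^{n_1}=\delta u^n+h_t(f_u\delta u^{n_1}+f_v\delta v^n)$, and similarly for the other variants. For the Poisson maps, differentiating $u^n\exp(h_t f/u^n)$ at the equilibrium leaves only $h_t\,\mathrm d f$, since $f=0$ kills the derivative of $1/u^n$. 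This Kronecker form acts identically on every spatial mode, so its projection onto the $\mu$-eigenspace is exactly $A^{\mathcal S}_{h_t}$. The explicit entries are not needed for the lemma, only their existence and locality.

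\emph{Diffusion factor.} The diffusion map is linear and block diagonal in $(U,V)$. For IE, $U^{n+1}=(I-h_tA_u)^{-1}U^{n_1}$ in vector form, with $A_u=D_u(I_y\otimes T_x+T_y\otimes I_x)$ after the scaling. Since $T_x,T_y$ share an eigenbasis with $A_u,A_v$, the eigenvector $q_j\otimes p_i$ is mapped with eigenvalue $-D_u\mu_{ij}$, so $(I-h_tA_u)^{-1}$ acts on it as $1/(1+h_tD_u\mu)$. Similarly the exponential $e^{h_tA_u}$ acts as $e^{-h_tD_u\mu}$; in matrix form $e^{h_tT_{11}}Ue^{h_tT_{12}}$ with $U=p_iq_j^T$ gives $e^{h_t(\lambda_i^x+\lambda_j^y)D_u}U$. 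The same holds for $V$ with $D_v$. This yields $B^{\mathcal D}_{h_t}(\mu)=\operatorname{diag}(b_u^{\mathcal D},b_v^{\mathcal D})$ as in \eqref{eq:diffusion_factors}.

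\emph{Combination.} Writing perturbations as $\delta U=\alpha\,p_iq_j^T$ and $\delta V=\beta\,p_iq_j^T$, the reaction linearization maps $(\alpha,\beta)\mapsto A^{\mathcal S}_{h_t}(\alpha,\beta)^T$ while preserving the spatial shape. Diffusion then scales the components by $B^{\mathcal D}_{h_t}(\mu)$, so the invariant two-dimensional block is $B^{\mathcal D}_{h_t}(\mu)A^{\mathcal S}_{h_t}$.

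The main obstacle is justifying a common diagonalizing basis, namely that $T_x,T_y$ with the ECDF Neumann corrections are diagonalizable with real eigenvalues $\lambda\le0$. I would invoke the statement preceding \eqref{eq:mu_modes} (they are similar to symmetric matrices), taking right eigenvectors of $T_y^T$ for the right-multiplication.
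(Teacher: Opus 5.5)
Your proposal is correct and is essentially the argument the paper relies on: the paper states this lemma without a separate proof, defining $G_{h_t}(\mu)$ as the projection of the Jacobian of $\Phi^{[D]}_{h_t}\circ\Phi^{[R,\mathcal S]}_{h_t}$ at the equilibrium onto the common eigenvectors of $T_x,T_y$. Your chain-rule factorization, with reaction Jacobian $A^{\mathcal S}_{h_t}\otimes I$ and diffusion acting on each mode as $1/(1+h_tD\mu)$ or $e^{-h_tD\mu}$ for $D\in\{D_u,D_v\}$, is exactly that computation. The real diagonalizability with nonpositive eigenvalues that you flag is also only asserted in the paper, not proved. It closes easily: the second-order Neumann matrices are tridiagonal with positive off-diagonal entries, hence diagonally similar to symmetric matrices, and they have zero row sums, so Gershgorin places their spectrum in $(-\infty,0]$.
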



The Jacobians of the linearized reaction maps
\(A_{h_t}^{\mathcal S}\), with \(\mathcal S\in\mathfrak S\), are summarized
in  Table \ref{tab:reaction_jacobians}.
\begin{table}[tbp]
\centering
\renewcommand{\arraystretch}{2.5}
\begin{tabular}{c|p{0.72\textwidth}}
\hline
Scheme \(\mathcal S\) & Jacobian \(A_{h_t}^{\mathcal S}\) \\
\hline
EE
&
\(\displaystyle
A_{h_t}^{\mathrm{EE}}=\begin{pmatrix}
1+h_tf_u
&
h_tf_v
\\[6pt]
h_tg_u
&
1+h_tg_v
\end{pmatrix}
\)
\\[0.3cm]
\hline
ASE
&
\(\displaystyle
A_{h_t}^{\mathrm{ASE}}
=
\begin{pmatrix}
1+h_tf_u+\dfrac{h_t^2f_vg_u}{s_g}
&
\dfrac{h_tf_v}{s_g}
\\[6pt]
\dfrac{h_tg_u}{s_g}
&
\dfrac{1}{s_g}
\end{pmatrix}
\)
\\[6pt]
\hline
SE
&
\(\displaystyle
A_{h_t}^{\mathrm{SE}}
=
\begin{pmatrix}
\dfrac{1}{s_f}
&
\dfrac{h_tf_v}{s_f}
\\[6pt]
\dfrac{h_tg_u}{s_f}
&
1+h_tg_v+\dfrac{h_t^2g_uf_v}{s_f}
\end{pmatrix}
\)
\\[6pt]
\hline
EVSE
&
\(\displaystyle
A_{h_t}^{\mathrm{EVSE}}
=
\begin{pmatrix}
1+h_tf_u
&
h_tf_v
\\[4pt]
h_tg_u(1+h_tf_u)
&
1+h_tg_v+h_t^2g_uf_v
\end{pmatrix}
\)
\\
\hline
EVASE
&
\(\displaystyle
A_{h_t}^{\mathrm{EVASE}}
=
\begin{pmatrix}
1+h_tf_u+h_t^2f_vg_u
&
h_tf_v(1+h_tg_v)
\\[4pt]
h_tg_u
&
1+h_tg_v
\end{pmatrix}
\)
\\
\hline 
Poisson-type schemes
&
\(\displaystyle
\begin{aligned}
A_{h_t}^{\mathrm{PE}}&=A_{h_t}^{\mathrm{SE}},
&
A_{h_t}^{\mathrm{APE}}&=A_{h_t}^{\mathrm{ASE}},
\\
A_{h_t}^{\mathrm{EVPE}}&=A_{h_t}^{\mathrm{EVSE}},
&
A_{h_t}^{\mathrm{EVAPE}}&=A_{h_t}^{\mathrm{EVASE}}.
\end{aligned}
\)
\\[0.3cm]
\hline
\renewcommand{\arraystretch}{0.5cm}
\end{tabular}
\captionof{table}{Jacobians of the linearized reaction maps associated with the schemes $\mathcal{S} \in \mathfrak{S}$. The quantities $s_g:=1-h_tg_v$ and $s_f:=1-h_tf_u$ are introduced for compactness.}
\label{tab:reaction_jacobians}
\end{table}

\noindent
In order to analyze fully-discrete
Turing instabilities of first-order symplectic schemes here considered, we will make use of the Jury stability criterion provided in Proposition \ref{prop:jury_turing}. Their
instability region will be characterized by the violation of at least one of these conditions for some $\mu>0$, while they remain satisfied at $\mu=0$.

Firstly, we establish  propositions that clarify the relationship
between the continuous and fully discrete stability conditions in terms of the
Jury characterization, thereby distinguishing instability mechanisms inherited
from the continuous problem from purely spurious numerical instabilities.   
\begin{proposition}
\label{prop:J3_monotonicity}
For all $\mathrm{IE}$-\(\mathcal S\) and $\mathrm{EX}$-\(\mathcal S\) families, if homogeneous
stability holds, then the third Jury condition cannot be violated.
\end{proposition}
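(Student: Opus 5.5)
Because the amplification matrix factors as $G_{h_t}^{\mathcal D-\mathcal S}(\mu)=B_{h_t}^{\mathcal D}(\mu)\,A_{h_t}^{\mathcal S}$ (Lemma~\ref{lem:general_modal_matrix}), its determinant factors too. This gives
\[
\delta_{h_t}(\mu)=b_u^{\mathcal D}(\mu)\,b_v^{\mathcal D}(\mu)\,\det A_{h_t}^{\mathcal S},
\]
where $\det A_{h_t}^{\mathcal S}$ does not depend on $\mu$. At the homogeneous mode both diffusion factors in \eqref{eq:diffusion_factors} equal $1$, for the implicit Euler and the exact diffusion alike. Hence $\delta_{h_t}(0)=\det A_{h_t}^{\mathcal S}$ and, for every $\mu\ge0$,
\[
\delta_{h_t}(\mu)=\beta(\mu)\,\delta_{h_t}(0),\qquad \beta(\mu):=b_u^{\mathcal D}(\mu)\,b_v^{\mathcal D}(\mu).
\]

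The next step is to check that $0<\beta(\mu)\le 1$ for all $\mu\ge0$. For the IE family this is $\bigl[(1+h_tD_u\mu)(1+h_tD_v\mu)\bigr]^{-1}\in(0,1]$, and for the EX family it is $e^{-h_t(D_u+D_v)\mu}\in(0,1]$, since $h_t,D_u,D_v,\mu\ge0$. Moreover $\beta$ is nonincreasing in $\mu$, which is the monotonicity alluded to in the label of the proposition.

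Homogeneous stability means $\rho(G_{h_t}(0))<1$. By Proposition~\ref{prop:jury_turing} it implies $\mathcal J_3(0)>0$, i.e.\ $\delta_{h_t}(0)<1$. Two cases then give the conclusion. If $\delta_{h_t}(0)\le 0$, then $\delta_{h_t}(\mu)=\beta(\mu)\delta_{h_t}(0)\le0<1$. If $0<\delta_{h_t}(0)<1$, then $\delta_{h_t}(\mu)\le\delta_{h_t}(0)<1$. In both cases $\mathcal J_3(\mu)=1-\delta_{h_t}(\mu)>0$ for every $\mu\ge0$.

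One can also record that, by the Schur form in Remark~\ref{rem:why_jury}, stability forces $|\delta_{h_t}(0)|<1$, so in fact $|\delta_{h_t}(\mu)|<1$ as well. The argument needs neither the explicit determinants of Table~\ref{tab:reaction_jacobians} nor the sign of $\det A^{\mathcal S}_{h_t}$; it rests only on the $\mu$-independence of the reaction factor and on $0<\beta\le1$. No step is a real obstacle. The only care needed is the sign case split, so that scaling by $\beta\le1$ is not wrongly assumed to preserve an upper bound for a negative determinant; that case is trivial anyway.
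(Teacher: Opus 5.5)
Your proof is correct and is essentially the paper's argument. The paper also factors the determinant as $\delta_{h_t}(\mu)=b_u^{\mathcal D}(\mu)b_v^{\mathcal D}(\mu)\det A_{h_t}^{\mathcal S}$, uses $0<b_u^{\mathcal D}b_v^{\mathcal D}\le 1$ together with $\mathcal J_3(0)>0$, and splits on the sign of $\det A_{h_t}^{\mathcal S}$; the only differences are that you place the zero-determinant case on the nonpositive side of the split and add the observation $|\delta_{h_t}(\mu)|<1$, which is valid but not needed.
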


\begin{proof}
By definition of the third Jury quantity, we have
$
\mathcal J_3(\mu)
=
1-b_u^{\mathcal D}(\mu)b_v^{\mathcal D}(\mu)\det\bigl(A_{h_t}^{\mathcal S}\bigr).$
If homogeneous stability holds, then in particular
$
\mathcal J_3(0)>0.$
Since \(b_u^{\mathcal D}(0)b_v^{\mathcal D}(0)=1\), this implies
$
1-\det\bigl(A_{h_t}^{\mathcal S}\bigr)>0.$
Moreover, for all \(\mu\ge0\),
$
0<b_u^{\mathcal D}(\mu)b_v^{\mathcal D}(\mu)\le 1.$
Hence, if \(\det(A_{h_t}^{\mathcal S})\ge0\), then
\[
\mathcal J_3(\mu)
=
1-b_u^{\mathcal D}(\mu)b_v^{\mathcal D}(\mu)\det\bigl(A_{h_t}^{\mathcal S}\bigr)
\ge
1-\det\bigl(A_{h_t}^{\mathcal S}\bigr)
=
\mathcal J_3(0)>0.
\]
On the other hand, if \(\det(A_{h_t}^{\mathcal S})<0\), then
\[
\mathcal J_3(\mu)
=
1-b_u^{\mathcal D}(\mu)b_v^{\mathcal D}(\mu)\det\bigl(A_{h_t}^{\mathcal S}\bigr)
>1>0.
\]
Therefore, \(\mathcal J_3(\mu)>0\) for all \(\mu\ge0\), and the third Jury
condition cannot be violated.
\end{proof}

%

\begin{proposition}[Continuous interpretation]
\label{prop:continuous_interpretation}
For every first-order splitting scheme considered above, the modal amplification
matrix admits the second-order expansion
\[
G_{h_t}(\mu) = I + h_t\,J(\mu) + h_t^{2}\,G_2(\mu) + O(h_t^{3}),
\qquad J(\mu) := J^{*} - \mu D,
\]
and the first two Jury quantities satisfy
\[
\mathcal J_1(\mu) = h_t^{2}\,h(\mu) + O(h_t^{3}),
\qquad
\mathcal J_2(\mu) = 4 + 2\,h_t\,\operatorname{tr}\!\big(J(\mu)\big) + O(h_t^{2}).
\]
Hence, the discrete stability condition $\mathcal J_1(\mu)>0$ corresponds,
asymptotically as $h_t\to 0$, to the continuous condition $h(\mu)>0$, while the
condition $\mathcal J_2(\mu)>0$ is a purely discrete stability requirement
without a continuous counterpart.
\end{proposition}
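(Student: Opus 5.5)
Since every scheme factors as \(G_{h_t}(\mu)=B_{h_t}^{\mathcal D}(\mu)\,A_{h_t}^{\mathcal S}\) by Lemma~\ref{lem:general_modal_matrix}, the expansion of \(G_{h_t}\) reduces to expanding the two factors separately. For the diffusion factors we have \(b_u^{\mathrm{IE}}=1-h_tD_u\mu+h_t^2D_u^2\mu^2+O(h_t^3)\) and \(b_u^{\mathrm{EX}}=1-h_tD_u\mu+\tfrac12h_t^2D_u^2\mu^2+O(h_t^3)\), and analogously for \(b_v\). Hence \(B_{h_t}^{\mathcal D}(\mu)=I-h_t\mu D+h_t^2 B_2+O(h_t^3)\) with \(B_2\) diagonal. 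For each reaction Jacobian in Table~\ref{tab:reaction_jacobians} we expand \(1/s_g=1+h_tg_v+h_t^2g_v^2+O(h_t^3)\) and \(1/s_f\) similarly. In every case this gives \(A_{h_t}^{\mathcal S}=I+h_tJ^*+h_t^2A_2^{\mathcal S}+O(h_t^3)\), since all entries agree with \(A^{\mathrm{EE}}\) to first order. Multiplying the two expansions yields the claimed form with \(J(\mu)=J^*-\mu D\) and \(G_2=B_2+A_2^{\mathcal S}-\mu D J^*\). The explicit form of \(G_2\) will not be needed. For fixed \(\mu\) all remainders are uniform, so treating \(\mu\) as a fixed parameter is enough.

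Next we compute trace and determinant. Write \(G=I+E\) with \(E=h_tJ+h_t^2G_2+O(h_t^3)\). For \(2\times2\) matrices,
\[
\det(I+E)=1+\operatorname{tr}E+\det E .
\]
This gives the exact identities
\[
\mathcal J_1=1-\operatorname{tr}(I+E)+\det(I+E)=\det E,
\qquad
\mathcal J_2=4+2\operatorname{tr}E+\det E .
\]
The identity for \(\mathcal J_1\) is the key observation. It removes all first- and second-order trace contributions, including those from \(G_2\), so no explicit knowledge of \(G_2\) is required. Then \(\det E=h_t^2\det J(\mu)+O(h_t^3)\), and \(\det(J^*-\mu D)=h(\mu)\) by \eqref{eq:h_cont}. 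Indeed, expanding gives \((f_u-D_u\mu)(g_v-D_v\mu)-f_vg_u\), which equals \(h(\mu)\). This proves the formula for \(\mathcal J_1\). For \(\mathcal J_2\) we have \(2\operatorname{tr}E=2h_t\operatorname{tr}J(\mu)+O(h_t^2)\) and \(\det E=O(h_t^2)\), which gives \(4+2h_t\operatorname{tr}J(\mu)+O(h_t^2)\).

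The interpretation then follows. Dividing \(\mathcal J_1\) by \(h_t^2\), we see that \(\operatorname{sign}\mathcal J_1(\mu)=\operatorname{sign}h(\mu)\) for \(h_t\) small whenever \(h(\mu)\neq0\). Meanwhile \(\mathcal J_2\to4>0\), so it is automatically satisfied in the limit and can only fail at finite \(h_t\). This is why \(\mathcal J_2\) is a purely discrete requirement with no continuous counterpart.

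The main obstacle is mostly bookkeeping: we must check that every reaction map, including the EV variants and the \(s_f,s_g\) denominators, has first-order part exactly \(J^*\). A quick inspection of the table settles this. The identity \(\mathcal J_1=\det(G-I)\) is the step that makes the rest clean, since it spares us from computing \(G_2\) at all.
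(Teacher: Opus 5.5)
Your proof is correct. Its first half (factoring $G_{h_t}=B^{\mathcal D}_{h_t}A^{\mathcal S}_{h_t}$, expanding both factors to second order and multiplying) coincides with the paper's: your $G_2=B_2+A_2^{\mathcal S}-\mu DJ^*$ is the paper's $M^{\mathcal S}-\mu DJ^*+\alpha\mu^2D^2$.

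The routes part ways when you read off the Jury quantities. The paper expands $\tau_{h_t}$ and $\delta_{h_t}$ separately to order $h_t^2$ and names the common second-order trace coefficient $S$. It then checks by direct computation that $S$ cancels in $1-\tau_{h_t}+\delta_{h_t}$, leaving the mixed term $h(\mu)$. You instead use the exact $2\times2$ identities $\mathcal J_1=\det(G-I)$ and $\mathcal J_2=\det(G+I)=4+2\operatorname{tr}E+\det E$, i.e.\ the characteristic polynomial of $G$ evaluated at $\lambda=\pm1$. In your version the cancellation is structural: since $G-I=O(h_t)$, one gets $\mathcal J_1=h_t^2\det J(\mu)+O(h_t^3)$ whatever $G_2$ is.

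This buys more than brevity. It explains why the scheme-dependent matrix $M^{\mathcal S}$ cannot reach order $h_t^2$ in $\mathcal J_1$. It also yields the exact IMEX identity of Remark~\ref{prop:EE_J1_structure} in one line: for IE--EE,
$G-I=B^{\mathrm{IE}}_{h_t}\bigl(A^{\mathrm{EE}}_{h_t}-(B^{\mathrm{IE}}_{h_t})^{-1}\bigr)=h_t\,B^{\mathrm{IE}}_{h_t}J(\mu)$.
More generally, $P^{\mathcal S}_{\sigma_1}(\mu)=\det\bigl(A^{\mathcal S}_{h_t}-I-h_t\mu D\bigr)$ for every IE--$\mathcal S$ scheme. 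The paper's explicit bookkeeping, for its part, exhibits the $h_t^2$ coefficient $2S+h(\mu)$ of $\mathcal J_2$ used in the remark that follows. Your formula recovers this coefficient at once as $2\operatorname{tr}G_2+\det J(\mu)$.

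One small wording point: the $O(h_t^3)$ constants grow with $\mu$. So, as in the paper, the result is pointwise in $\mu$, or uniform only on bounded sets of modes, rather than uniform over the whole grid spectrum.
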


\begin{proof}
We expand separately the diffusion and the reaction substeps to second order in
$h_t$, multiply the two expansions, and read off the trace and determinant of
$G_{h_t}(\mu)$.

\medskip
\noindent\emph{Diffusion substep.}
For both the implicit Euler and the exact treatment of the diffusive flow the
modal factors $b_u^{\mathcal D},b_v^{\mathcal D}$ are scalar functions of $h_t\mu$ with the common
expansion
\[
b_u^{\mathcal D}(\mu) = 1 - h_t D_u\mu + \alpha\,h_t^{2}D_u^{2}\mu^{2} + O(h_t^{3}),
\qquad
b_v^{\mathcal D}(\mu) = 1 - h_t D_v\mu + \alpha\,h_t^{2}D_v^{2}\mu^{2} + O(h_t^{3}),
\]
where $\alpha = 1$ for the IE scheme and $\alpha = \tfrac12$ for the EX scheme.
Thus $B_{h_t}^{\mathcal D}(\mu) = \operatorname{diag}\!\big(b_u^{\mathcal D},b_v^{\mathcal D}\big)
= I - h_t\mu D + \alpha\,h_t^{2}\mu^{2}D^{2} + O(h_t^{3})$.

\medskip
\noindent\emph{Reaction substep.}
Each linearized reaction map of Table~\ref{tab:reaction_jacobians} is analytic in $h_t$ and reduces to the
identity at $h_t=0$, so it admits an expansion of the form
\begin{equation}
\label{eq:reaction_expansion}
A_{h_t}^{\mathcal S} = I + h_t J^{*} + h_t^{2} M^{\mathcal S} + O(h_t^{3}),
\qquad
M^{\mathcal S} = \begin{pmatrix} m_{11} & m_{12}\\ m_{21} & m_{22}\end{pmatrix},
\end{equation}
where the second-order matrix $M^{\mathcal S}$ depends on the scheme. For instance, for
the EE reaction $M^{\mathrm{EE}}=0$, while for the ASE reaction a direct
expansion of the entries in Table~~\ref{tab:reaction_jacobians} (using $s_g=1-h_tg_v$ and
$(1-h_tg_v)^{-1}=1+h_tg_v+O(h_t^2)$) gives
\[
A^{\mathrm{ASE}}_{h_t}=
\begin{pmatrix}
1+h_tf_u+h_t^{2}f_vg_u+O(h_t^3) & h_tf_v+h_t^{2}f_vg_v+O(h_t^3)\\[2pt]
h_tg_u+h_t^{2}g_ug_v+O(h_t^3) & 1+h_tg_v+h_t^{2}g_v^{2}+O(h_t^3)
\end{pmatrix},
\]
that is $M^{\mathrm{ASE}}=\left(\begin{smallmatrix} f_vg_u & f_vg_v\\ g_ug_v & g_v^{2}\end{smallmatrix}\right)$.
The explicit form of $M^{\mathcal S}$ will not be needed: only the fact that it enters at
order $h_t^{2}$ matters below.

\medskip
\noindent\emph{Product expansion.}
Multiplying the two expansions and collecting powers of $h_t$,
\[
G_{h_t}(\mu) = B_{h_t}^{\mathcal D}(\mu)\,A_{h_t}^{\mathcal S}
= I + h_t\big(J^{*}-\mu D\big) + h_t^{2}\,G_2(\mu) + O(h_t^{3}),
\]
which proves the first claim with $J(\mu)=J^{*}-\mu D$ and
$G_2(\mu) = M^{\mathcal S} - \mu D J^{*} + \alpha\mu^{2}D^{2}$.
Writing $G_{h_t}(\mu)=\big(G_{ij}\big)$, the entrywise expansions are
\[
\begin{aligned}
G_{11} &= 1 + h_t(f_u-\mu D_u) + h_t^{2}\big(m_{11}-\mu D_u f_u+\alpha\mu^{2}D_u^{2}\big)+O(h_t^3),\\
G_{22} &= 1 + h_t(g_v-\mu D_v) + h_t^{2}\big(m_{22}-\mu D_v g_v+\alpha\mu^{2}D_v^{2}\big)+O(h_t^3),\\
G_{12} &= h_t f_v + h_t^{2}\big(m_{12}-\mu D_u f_v\big)+O(h_t^3),\\
G_{21} &= h_t g_u + h_t^{2}\big(m_{21}-\mu D_v g_u\big)+O(h_t^3).
\end{aligned}
\]
Note, in particular, the cross term $-\mu D_v g_v$ in $G_{22}$ (and $-\mu D_u f_u$
in $G_{11}$), which originates from the product of the linear diffusion term
$-h_t\mu D_v$ with the diagonal reaction term $1+h_tg_v$.

\medskip
\noindent\emph{Trace and determinant.}
For the trace,
\[
\tau_{h_t}(\mu) = G_{11}+G_{22}
= 2 + h_t\,\operatorname{tr}\!\big(J(\mu)\big)
+ h_t^{2}\,S + O(h_t^{3}),
\]
where, writing $\operatorname{tr}(J(\mu))=(f_u+g_v)-\mu(D_u+D_v)$, the
second-order coefficient is
\[
S := (m_{11}+m_{22}) - \mu\big(D_u f_u + D_v g_v\big) + \alpha\mu^{2}\big(D_u^{2}+D_v^{2}\big).
\]
For the determinant, $\delta_{h_t}(\mu)=G_{11}G_{22}-G_{12}G_{21}$, a direct
computation of the $h_t^{2}$ coefficient gives the same quantity $S$ augmented by
the mixed contribution coming from the $O(h_t)$ entries,
\[
\delta_{h_t}(\mu)
= 1 + h_t\,\operatorname{tr}\!\big(J(\mu)\big)
+ h_t^{2}\big(S + h(\mu)\big) + O(h_t^{3}),
\]
where
\[
h(\mu)=D_uD_v\mu^{2}-(D_ug_v+D_vf_u)\mu+(f_ug_v-f_vg_u)
\]
is precisely the continuous Turing polynomial. Significantly, the extra term $h(\mu)$
does not involve the entries $m_{ij}$ of $M^{\mathcal S}$: it arises only from the
interaction between the diffusion factors and the \emph{linear} part $J^{*}$ of
the reaction map. Hence the computation below is the same for every scheme
$\mathcal S$.

\medskip
\noindent\emph{Jury quantities.}
Using $\mathcal J_1 = 1-\tau_{h_t}+\delta_{h_t}$, the constant terms cancel, the $O(h_t)$ terms cancel, and the second-order
terms cancel as well, leaving exactly the mixed contribution:
\[
\mathcal J_1(\mu)
= \big(1-2+1\big)
+ h_t\big(-\operatorname{tr}J(\mu)+\operatorname{tr}J(\mu)\big)
+ h_t^{2}\big(-S + S + h(\mu)\big) + O(h_t^{3})
= h_t^{2}\,h(\mu) + O(h_t^{3}).
\]
 For the second Jury quantity
$\mathcal J_2 = 1+\tau_{h_t}+\delta_{h_t}$ the constant terms add up to $4$ and
the $O(h_t)$ terms add up to $2\operatorname{tr}(J(\mu))$, while the second-order
terms now add:
\[
\mathcal J_2(\mu)
= 4 + 2h_t\,\operatorname{tr}\!\big(J(\mu)\big)
+ h_t^{2}\big(2S + h(\mu)\big) + O(h_t^{3}).
\]
Discarding the $O(h_t^{2})$ remainder yields
\[
\mathcal J_2(\mu) = 4 + 2h_t\,\operatorname{tr}\!\big(J(\mu)\big) + O(h_t^{2}),
\]
as claimed. In contrast with $\mathcal J_1$, the $h_t^{2}$ contributions to
$\mathcal J_2$ do not cancel, which already indicates that the second Jury
condition encodes information of a purely discrete nature, with no counterpart in
the continuous Turing polynomial.
\end{proof}

\begin{remark}
 Since the $h_t^{2}$ term of
$\mathcal J_1$ equals $h(\mu)$ \emph{independently} of the scheme-dependent
matrix $M^{S}$, all first-order families share the same leading-order first Jury
quantity. The schemes differ only at order $h_t^{3}$ in $\mathcal J_1$, and
already at order $h_t^{2}$ in $\mathcal J_2$ through the term $2S+h(\mu)$. The
exact (non-asymptotic) identity $P_{\sigma_1}^{\mathrm{IMEX}}(\mu)=h_t^{2}h(\mu)$
of Remark~\ref{prop:EE_J1_structure} is the special case in which $M^{\mathrm{EE}}=0$ makes all
higher-order corrections to $\mathcal J_1$ vanish.
\end{remark}

\begin{proposition}[Homogeneous-mode stability thresholds]
\label{prop:homogeneous_thresholds}
Assume that the kinetic equilibrium is linearly stable, i.e. that
\eqref{eq:conditions_lin} holds. 
For each scheme \(\mathcal S\), define
\[
\bar h_{\mathrm{hom}}^{\mathcal S}
:=
\sup\Bigl\{
h_t>0:
\mathcal J_1(0)>0,\;
\mathcal J_2(0)>0,\;
\mathcal J_3(0)>0
\Bigr\}.
\]
Then the kinetic equilibrium is stable for the homogeneous mode of the
numerical scheme \(\mathcal S\) only for time steps satisfying
\[
0<h_t<\bar h_{\mathrm{hom}}^{\mathcal S}.
\]
Using the same notation as given in Equations \ref{eq:conditions_lin}, the homogeneous stability thresholds are reported in Table \ref{tab:hom_thresholds}.
\begin{table}[ht]
\centering
\renewcommand{\arraystretch}{1}
\begin{tabular}{ll}
\hline
Family & Threshold \\
\hline
\noalign{\vskip 2pt}
EE &
$\displaystyle
\bar h_{\mathrm{hom}}^{\mathrm{EE}}
=
\begin{cases}
\dfrac{-\tau^*-\sqrt{(\tau^*)^2-4\delta^*}}{\delta^*},
& (\tau^*)^2\ge4\delta^*,\\[1ex]
\dfrac{-\tau^*}{\delta^*},
& (\tau^*)^2<4\delta^*
\end{cases}
$
\\[3ex]
\noalign{\vskip 2pt}
ASE/APE &
$\displaystyle
\bar h_{\mathrm{hom}}^{\mathrm{ASE}}
=
\min\!\left\{
\gamma_g,\,
\frac{f_u-g_v+\sqrt{(f_u-g_v)^2+4\delta^*}}{\delta^*}
\right\}
$
\\[2ex]

SE/PE &
$\displaystyle
\bar h_{\mathrm{hom}}^{\mathrm{SE}}
=
\min\!\left\{
\gamma_f,\,
\frac{g_v-f_u+\sqrt{(g_v-f_u)^2+4\delta^*}}{\delta^*}
\right\}
$
\\[2ex]

EVSE, EVPE, EVASE, EVAPE &
$\displaystyle
\bar h_{\mathrm{hom}}^{\mathrm{EV}}
=
\begin{cases}
\dfrac{-\tau^*-\sqrt{(\tau^*)^2-4\beta}}{\beta},
& \beta<0,\\[1ex]
-\dfrac{2}{\tau^*},
& \beta=0
\end{cases}
$
\\[6pt]
\hline
\end{tabular}

\medskip

\[
\gamma_g=
\begin{cases}
1/g_v,& g_v>0,\\
+\infty,& g_v\le0,
\end{cases}
\qquad
\gamma_f=
\begin{cases}
1/f_u,& f_u>0,\\
+\infty,& f_u\le0,
\end{cases}
\qquad
\beta=f_u g_v+f_v g_u.
\]
\caption{Homogeneous stability thresholds.}
\label{tab:hom_thresholds}
\end{table}








\end{proposition}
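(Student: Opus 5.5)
At $\mu=0$ both diffusion factors equal one, so by Lemma~\ref{lem:general_modal_matrix} we have $G_{h_t}(0)=A_{h_t}^{\mathcal S}$. The proposition therefore reduces to a purely kinetic computation. For each reaction Jacobian in Table~\ref{tab:reaction_jacobians} I will compute $\tau_{h_t}(0)=\operatorname{tr}A_{h_t}^{\mathcal S}$ and $\delta_{h_t}(0)=\det A_{h_t}^{\mathcal S}$ as explicit functions of $h_t$. I will then write the three Jury quantities of Proposition~\ref{prop:jury_turing} as polynomials, or as rational functions with denominator $s_f$ or $s_g$. The threshold is the first positive root at which one of them changes sign.

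The key simplification is that $\mathcal J_1(0)>0$ holds for every $h_t>0$ in each family, so only $\mathcal J_2$ and $\mathcal J_3$ can bind.

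\textbf{EE.} Here $\tau=2+h_t\tau^*$ and $\delta=1+h_t\tau^*+h_t^2\delta^*$. This gives
\[
\mathcal J_1=h_t^2\delta^*>0,\qquad
\mathcal J_3=-h_t(\tau^*+h_t\delta^*),\qquad
\mathcal J_2=4+2h_t\tau^*+h_t^2\delta^* .
\]
So $\mathcal J_3>0$ exactly when $h_t<-\tau^*/\delta^*$. The roots of $\mathcal J_2$ are $(-\tau^*\pm\sqrt{(\tau^*)^2-4\delta^*})/\delta^*$, each written with a factor $2$ absorbed. They are real only if $(\tau^*)^2\ge4\delta^*$. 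I need to compare the smaller root with $-\tau^*/\delta^*$ and check which constraint is active in each case. I will verify the exact normalization of the displayed formula against the quadratic formula at this point; if it disagrees, I will report the correct constants.

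\textbf{ASE.} Since $\det A^{\mathrm{ASE}}=(1+h_tf_u)/s_g$, I will multiply each Jury quantity by $s_g$. This requires $s_g>0$, i.e.\ $h_t<\gamma_g$, which is the first term of the minimum. Crossing $s_g=0$ makes the map blow up, so stability fails beyond it.

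With $s_g>0$:
\begin{itemize}
\item $s_g\mathcal J_1=h_t^2\delta^*>0$;
\item $s_g\mathcal J_3=-h_t\tau^*>0$ for all $h_t$;
\item $s_g\mathcal J_2=4+2h_t(f_u-g_v)-h_t^2\delta^*$, after a short computation using $\operatorname{tr}A^{\mathrm{ASE}}=(2+h_tf_u-h_tg_v+h_t^2\delta^*\cdot\text{(sign check)})/s_g$.
\end{itemize}
This is a concave quadratic with value $4$ at $0$. Its unique positive root gives the second entry of the minimum. The root's normalization again needs to be checked against the stated form.

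\textbf{SE/PE.} This follows by the symmetric computation with $f_u\leftrightarrow g_v$ and $s_f$ in place of $s_g$. The Poisson variants inherit the results from the Jacobian identities in Table~\ref{tab:reaction_jacobians}.

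\textbf{EV family.} The determinant is $(1+h_tf_u)(1+h_tg_v)$ and the trace is $2+h_t\tau^*+h_t^2f_vg_u$. This gives
\[
\mathcal J_1=h_t^2\delta^*>0,\qquad
\mathcal J_2=4+2h_t\tau^*+h_t^2\beta,\qquad
\mathcal J_3=-h_t\tau^*-h_t^2f_ug_v .
\]
For $\beta<0$ the positive root of $\mathcal J_2$ gives the stated formula; for $\beta=0$ it gives $-2/\tau^*$. I must also show that $\mathcal J_3$ does not bind first. For this I will use $f_ug_v=(\beta+\delta^*)/2-f_vg_u$-type identities, or argue directly that at the root of $\mathcal J_2$ we still have $\mathcal J_3>0$.

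\textbf{Main obstacle.} The delicate part is showing, family by family, that the constraint reported in the table is the first one violated. This is clear for $\mathcal J_1$, but the comparison of $\mathcal J_2$ against $\mathcal J_3$ needs care, especially for EE with complex roots and for the EV family when $f_ug_v<0$. The case $\beta>0$, which the table omits, also needs to be handled or excluded.

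The sup definition requires the admissible set to be an interval $(0,\bar h)$. I will justify this by observing that each binding quantity is positive at $0^+$ (via $\tau^*<0$) and is a quadratic or linear function with a single sign change on $(0,\infty)$ in the relevant regime.
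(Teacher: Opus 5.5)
Your route is the paper's route: at $\mu=0$ you use $G_{h_t}(0)=A^{\mathcal S}_{h_t}$ and then solve the three Jury inequalities in $h_t$. All your Jury quantities are correct. The sign you left open is a minus, $s_g\operatorname{tr}A^{\mathrm{ASE}}_{h_t}=2+h_t(f_u-g_v)-h_t^2\delta^*$, and the normalizations in the table agree with the quadratic formula.

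Two steps, however, do not go through as written. First, your ``key simplification'' is false for ASE/APE and SE/PE. Since $\mathcal J_1=h_t^2\delta^*/s_g$ and $\mathcal J_3=-h_t\tau^*/s_g$, both become negative as soon as $s_g<0$, i.e.\ for $h_t>1/g_v$ when $g_v>0$. It is this sign flip, not a ``blow-up'', that puts $\gamma_g$ (resp.\ $\gamma_f$) into the minimum. The reaction map is singular only at the single value $h_t=1/g_v$ and is well defined again beyond it, so a blow-up argument says nothing about the supremum of the admissible set. Similarly, for EE with $(\tau^*)^2\ge4\delta^*$ the quadratic $\mathcal J_2$ is positive again on $(h_+,\infty)$. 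To conclude that the admissible set is exactly $(0,h_-)$, you need $h_-\le-\tau^*/\delta^*\le h_+$ together with $\mathcal J_3<0$ for $h_t>-\tau^*/\delta^*$.

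Second, the EV step is left open, and the identity you intend to use is misstated: in fact $\beta+\delta^*=2f_ug_v$, and $\mathcal J_3=-h_t(\tau^*+h_tf_ug_v)$. The paper closes this case by importing the Turing-necessary condition $f_ug_v<0$ of Remark~\ref{rem:opposite_signs_eq}, which is not among the proposition's hypotheses. With that condition, $\mathcal J_3>0$ for every $h_t>0$ and $\beta=2f_ug_v-\delta^*<-\delta^*<0$, so only the root of $\mathcal J_2$ matters.

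If you want to stay with kinetic stability alone, the case $\beta\le0$ can still be settled:
\begin{itemize}
\item if $f_ug_v\le0$, then $\mathcal J_3>0$ trivially;
\item if $f_ug_v>0$, let $h_3=-\tau^*/(f_ug_v)$ be the zero of $\mathcal J_3$. Then $\mathcal J_2(h_3)=4-(\tau^*)^2\delta^*/(f_ug_v)^2\le4-8<0$, because $(\tau^*)^2-4f_ug_v=(f_u-g_v)^2\ge0$ and $\delta^*\ge2f_ug_v$. Hence $\mathcal J_2$ vanishes first.
\end{itemize}

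The case $\beta>0$, however, cannot be excluded by kinetic stability; the paper's own Example~1 has $\beta\approx5.16-0.84>0$. There your interval argument genuinely fails. Take $f_u=-1/4$, $g_v=-4$, $f_vg_u=0.9$. Then $\tau^*=-4.25$, $\delta^*=0.1$, $\mathcal J_2=4-8.5h_t+1.9h_t^2$ and $\mathcal J_3=h_t(4.25-h_t)$. The admissible set is approximately $(0,0.534)\cup(3.94,4.25)$, which is not an interval. So you must either state the extra hypothesis $f_ug_v<0$ explicitly, as the paper implicitly does, or restrict the EV row to $\beta\le0$.
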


\begin{proof}
We consider the homogeneous mode \(\mu=0\). In this case, for both implicit
and explicit diffusion treatments, one has
\(
b_u^{\mathcal D}=b_v^{\mathcal D}=1.
\)
Therefore, the amplification matrix of the numerical scheme satisfies
\(
G_{h_t}(0)=A_{h_t}^{\mathcal S}.
\)
In particular, the trace and determinant \(\tau_{h_t}(0) \) and
\(\delta_{h_t}(0)\) of \(G_{h_t}(0)\) coincide with
those of the corresponding matrix \(A_{h_t}^{\mathcal S}\). Hence the
homogeneous-mode stability of the numerical scheme is determined by applying
the Jury conditions \eqref{eq:Jury_cond}-\eqref{eq:Jury} to the matrices
\(A_{h_t}^{\mathcal S}\) listed in Table~\ref{tab:reaction_jacobians}. Solving these inequalities with respect to \(h_t\) gives the thresholds
stated above.

For the EV families, the corresponding threshold depends on
\(
\beta:=f_u g_v+f_v g_u
\). From 
\(
f_u g_v=\frac{\beta+\delta^*}{2},
\)
the assumption \(\delta^*>0\) implies
\(
f_u g_v>0,
\)
which cannot occur for $\beta >0$ under the hypothesis  \(f_u g_v<0\) in Equation (\ref{eq:cond_fugv_neg}). Therefore, whenever the condition
\(f_u g_v<0\) is imposed, only the case \(\beta\leq0\) can occur.

\end{proof}
\subsection{Specialization of Jury conditions to first-order symplectic families}
\label{sec:jury_families}

In the following, we assume that each numerical scheme is applied with a
time step \(h_t\) such that the stability of the homogeneous equilibrium is
preserved. More precisely, in view of Proposition~\ref{prop:homogeneous_thresholds},
we assume that
\(
0<h_t<\bar h_{\mathrm{hom}}^{\mathcal S}.
\)
Under this assumption, the homogeneous mode \(\mu=0\) is stable for the
scheme \(\mathcal S\).

Moreover, as shown in Proposition~\ref{prop:J3_monotonicity},  if the third Jury condition is satisfied at the homogeneous
mode, then it remains satisfied for all admissible modes considered below.
Therefore, in the rest of this section, we specialize only the first two
Jury conditions to the first-order symplectic splitting families.

\begin{proposition}
\label{prop:negative_jury_some_mu}
Let \(\mathcal{S}\) be one of the reaction maps listed in Table~\ref{tab:reaction_jacobians}, and let
\(
A^\mathcal{S}_{h_t}=
\begin{pmatrix}
a_{11}^\mathcal{S} & a_{12}^\mathcal{S}\\
a_{21}^\mathcal{S} & a_{22}^\mathcal{S}
\end{pmatrix}
\)
be the Jacobian of the corresponding linearized reaction substep at the homogeneous equilibrium. 

Define the  functions
\begin{equation}\label{eq:J12compact}
    J^{\mathcal D-\mathcal S}_{i}(\mu)
=
\bigl(1+\sigma_i b_u^{\mathcal D}(\mu)a_{11}^\mathcal{S}\bigr)
\bigl(1+\sigma_i b_v^{\mathcal D}(\mu)a_{22}^\mathcal{S}\bigr)
-
b_u^{\mathcal D}(\mu)b_v^{\mathcal D}(\mu)a_{12}^\mathcal{S}a_{21}^\mathcal{S}, \quad \sigma_i=2i-3, \quad i=1,2. 
\end{equation}
Then, for the family of symplectic schemes  \(\mathcal D- \mathcal S\), with $\mathcal D \in (\mathrm{IE},\mathrm{EX})$ and $\mathcal{S} \in \mathfrak{S}$, \(\sigma_1=-1\) corresponds to the first Jury function \(J_1\), while
\(\sigma_2=+1\) corresponds to the second Jury function \(J_2\).

Moreover, for the  family \(\mathrm{IE}-\mathcal S\), 
\[
J^{\mathrm{IE}-\mathcal S}_{i}(\mu)
=
\frac{P_{\sigma_i}^{\mathcal S}(\mu)}
{(1+\alpha\mu)(1+\beta\mu)}, \quad 
\alpha=h_tD_u,\qquad \beta=h_tD_v.
\]
where
\(
P_{\sigma_i}^{\mathcal S}(\mu)
=
A\mu^2+B_{\sigma_i}^\mathcal S\mu+C_{\sigma_i}^{\mathcal S},\) with \[A=\alpha\beta>0, \quad  B_{\sigma_i}^\mathcal S
=
\beta(1+\sigma_i a_{11}^{\mathcal S})
+
\alpha(1+\sigma_i a_{22}^{\mathcal S}), \quad C_{\sigma_i}^{\mathcal S}
=
(1+\sigma_i a_{11}^{\mathcal S})(1+\sigma_i a_{22}^{\mathcal S})
-
a_{12}^{\mathcal S}a_{21}^{\mathcal S}.
\]

For the family \(\mathrm{EX}-{\mathcal S}\), one has
\[
J^{\mathrm{EX}-{\mathcal S}}_{i}(\mu)
=
1
+
\sigma a_{11}^{\mathcal S}e^{-h_tD_u\mu}
+
\sigma a_{22}^{\mathcal S}e^{-h_tD_v\mu}
+
\det(A^{\mathcal S}_{h_t})e^{-h_t(D_u+D_v)\mu}.
\]
Equivalently, setting
\(
x=e^{-h_tD_u\mu},\) and 
\(r=\frac{D_v}{D_u},\)
the change of variable \(\mu>0\) maps onto \(x\in(0,1)\), and one obtains
\[
J^{\mathrm{EX}-{\mathcal S}}_{i}(\mu)
=
F_i^{\mathcal S}(x),
\qquad x\in(0,1),
\]
where
\begin{equation}\label{eq:exponential}
F_i^{\mathcal S}(x)
=
1+\sigma_i a_{11}^{\mathcal S} x
+\sigma_i a_{22}^{\mathcal S} x^r
+\det(A_{h_t}^{\mathcal S})x^{1+r}.
\end{equation}
If \(r=D_v/D_u\) is an integer, \(F_{i}^{\mathcal S}\) is a polynomial in \(x\) on the interval \((0,1)\); otherwise it is a generalized polynomial.
\end{proposition}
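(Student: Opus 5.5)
The proof is a direct algebraic computation starting from Lemma~\ref{lem:general_modal_matrix}. By that lemma, $G_{h_t}^{\mathcal D-\mathcal S}(\mu)=B^{\mathcal D}_{h_t}(\mu)A^{\mathcal S}_{h_t}$ with $B^{\mathcal D}_{h_t}(\mu)$ diagonal. Writing the entries of $A^{\mathcal S}_{h_t}$ as $a_{ij}^{\mathcal S}$, this gives
\[
\tau_{h_t}(\mu)=b_u^{\mathcal D}a_{11}^{\mathcal S}+b_v^{\mathcal D}a_{22}^{\mathcal S},\qquad
\delta_{h_t}(\mu)=b_u^{\mathcal D}b_v^{\mathcal D}\det(A^{\mathcal S}_{h_t}).
\]
Here we use only that the determinant of a product is the product of the determinants.

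Next I expand \eqref{eq:J12compact} directly:
\[
\bigl(1+\sigma_i b_u^{\mathcal D}a_{11}^{\mathcal S}\bigr)\bigl(1+\sigma_i b_v^{\mathcal D}a_{22}^{\mathcal S}\bigr)-b_u^{\mathcal D}b_v^{\mathcal D}a_{12}^{\mathcal S}a_{21}^{\mathcal S}
=1+\sigma_i\tau_{h_t}(\mu)+\sigma_i^2\,b_u^{\mathcal D}b_v^{\mathcal D}a_{11}^{\mathcal S}a_{22}^{\mathcal S}-b_u^{\mathcal D}b_v^{\mathcal D}a_{12}^{\mathcal S}a_{21}^{\mathcal S}.
\]
Since $\sigma_i=\pm1$, we have $\sigma_i^2=1$, so the right-hand side equals $1+\sigma_i\tau_{h_t}(\mu)+\delta_{h_t}(\mu)$. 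With $\sigma_1=-1$ this is $\mathcal J_1$ and with $\sigma_2=+1$ it is $\mathcal J_2$, as defined in \eqref{eq:Jury}. This identification is the core of the statement; the rest consists of substituting the explicit diffusion factors.

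\textbf{IE family.} Insert $b_u^{\mathrm{IE}}=1/(1+\alpha\mu)$ and $b_v^{\mathrm{IE}}=1/(1+\beta\mu)$, with $\alpha=h_tD_u$ and $\beta=h_tD_v$, and multiply through by $(1+\alpha\mu)(1+\beta\mu)$. The first factor becomes $(1+\alpha\mu+\sigma_i a_{11}^{\mathcal S})$, the second becomes $(1+\beta\mu+\sigma_i a_{22}^{\mathcal S})$, and the last term becomes $a_{12}^{\mathcal S}a_{21}^{\mathcal S}$. Expanding $(\alpha\mu+p)(\beta\mu+q)-a_{12}^{\mathcal S}a_{21}^{\mathcal S}$, with $p=1+\sigma_ia_{11}^{\mathcal S}$ and $q=1+\sigma_ia_{22}^{\mathcal S}$, yields $\alpha\beta\mu^2+(\beta p+\alpha q)\mu+pq-a_{12}^{\mathcal S}a_{21}^{\mathcal S}$. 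These coefficients are exactly $A$, $B_{\sigma_i}^{\mathcal S}$ and $C_{\sigma_i}^{\mathcal S}$. Positivity $A>0$ follows from $h_t,D_u,D_v>0$.

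\textbf{EX family.} Insert the exponential factors into $1+\sigma_i\tau_{h_t}+\delta_{h_t}$ to obtain the stated formula, where $\sigma$ there means $\sigma_i$. Set $x=e^{-h_tD_u\mu}$. Then $e^{-h_tD_v\mu}=x^{r}$ with $r=D_v/D_u$, and $e^{-h_t(D_u+D_v)\mu}=x^{1+r}$. Because $h_tD_u>0$, the map $\mu\mapsto x$ is a strictly decreasing bijection from $(0,\infty)$ onto $(0,1)$, which gives \eqref{eq:exponential}. When $r\in\mathbb N$, all exponents $1$, $r$, $1+r$ are integers, so $F_i^{\mathcal S}$ is a polynomial; otherwise it has real exponents and is a generalized polynomial.

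No step is a genuine obstacle. The only point requiring care is the identification of the compact product form with the Jury quantities, which relies on $\sigma_i^2=1$ so that the diagonal cross term reassembles the determinant. After that, one only has to check that the clearing of denominators in the IE case is bookkept consistently with the stated $B_{\sigma_i}^{\mathcal S}$.
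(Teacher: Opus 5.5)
Your proposal is correct and follows essentially the same route as the paper: compute $\tau_{h_t}$ and $\delta_{h_t}$ for $B^{\mathcal D}_{h_t}(\mu)A^{\mathcal S}_{h_t}$, use $\sigma_i^2=1$ to recognize the product form as $1+\sigma_i\tau_{h_t}+\delta_{h_t}$, then clear the positive IE denominators and, for EX, substitute the exponential factors with $x=e^{-h_tD_u\mu}$. Your explicit bookkeeping of $B_{\sigma_i}^{\mathcal S}=\beta p+\alpha q$, your reading of the stray $\sigma$ as $\sigma_i$, and your check that $\mu\mapsto x$ is a bijection onto $(0,1)$ fill in details the paper leaves implicit.
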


\begin{proof}
The trace and determinant of
\(
G^{\mathcal D-{\mathcal S}}_{h_t}(\mu)
=
B^{\mathcal D}_{h_t}(\mu)A^{\mathcal S}_{h_t}
\)
are
\[
\tau^{\mathcal D-{\mathcal S}}_{h_t}(\mu)
=
b_u^{\mathcal D}(\mu)a_{11}^{\mathcal S}
+
b_v^{\mathcal D}(\mu)a_{22}^{\mathcal S},\qquad \delta^{\mathcal D-{\mathcal S}}_{h_t}(\mu)
=
b_u^{\mathcal D}(\mu)b_v^{\mathcal D}(\mu)
\det(A^{\mathcal S}_{h_t}).
\]
Since
\(
\det(A^{\mathcal S}_{h_t})
=
a_{11}^{\mathcal S}a_{22}^{\mathcal S}-a_{12}^{\mathcal S}a_{21}^{\mathcal S},
\)
the Jury functions
\[
J^{\mathcal D-{\mathcal S}}_{1}=1-\tau^{\mathcal D-{\mathcal S}}_{h_t}+\delta^{\mathcal D-S}_{h_t},
\qquad
J^{\mathcal D-{\mathcal S}}_{2}=1+\tau^{\mathcal D-{\mathcal S}}_{h_t}+\delta^{\mathcal D-{\mathcal S}}_{h_t}
\]
can be written in the unified form stated in \eqref{eq:J12compact}.

For the \(\mathrm{IE}-{\mathcal S}\) family, substituting
\(
b_u^{\mathrm{IE}}(\mu)=\frac{1}{1+\alpha\mu},
\,
b_v^{\mathrm{IE}}(\mu)=\frac{1}{1+\beta\mu},
\)
and multiplying by the positive denominator
\(
(1+\alpha\mu)(1+\beta\mu)>0
\)
gives the quadratic polynomial \(P_{\sigma}^{\mathcal S}(\mu)\). 

For the \(\mathrm{EX}-{\mathcal S}\) family, the substitution
\(
b_u^{\mathrm{EX}}(\mu)=e^{-h_tD_u\mu},
\,
b_v^{\mathrm{EX}}(\mu)=e^{-h_tD_v\mu}
\)
gives the exponential expression in \eqref{eq:exponential}. 
\end{proof}

\subsection{Turing instability analysis for the IE--\(\mathcal{S}\) families}
\label{subsec:IE_S_instability}

In view of Proposition~\ref{prop:negative_jury_some_mu}, we now restrict the
algebraic instability analysis to the \(\mathrm{IE}-\mathcal{S}\) families.
Indeed, in this case the denominator appearing in the Jury functions is strictly
positive for every \(\mu\geq0\), so that the sign of each Jury function is
completely determined by the sign of the corresponding quadratic polynomial
\(P_{\sigma_i}^{\mathcal{S}}\). This allows us to obtain conditions for the occurrence of
a non-homogeneous instability which do not explicitly involve the modal variable
\(\mu\).

More precisely, 
\(
P_{\sigma_i}^{\mathcal S}(\mu)
=
A\mu^2+B_{\sigma_i}^\mathcal S\mu+C_{\sigma_i}^{\mathcal S},\) 
with $A = D_u D_v h_t^2>0$, and denote by
\[
\Delta_{\sigma_i}^{\mathcal{S}}
=
\left(B_{\sigma_i}^{\mathcal{S}}\right)^2
-
4AC_{\sigma_i}^{\mathcal{S}}
\]
its discriminant. Since \(P_{\sigma_i}^{\mathcal{S}}\) is a convex quadratic polynomial,
the \(i\)-th Jury condition is violated for some positive non-homogeneous mode
if and only if
\[
C_{\sigma_i}^{\mathcal{S}}<0
\]
or
\[
B_{\sigma_i}^{\mathcal{S}}<0
\qquad\text{and}\qquad
\Delta_{\sigma_i}^{\mathcal{S}}>0.
\]
Under the standing assumption that the homogeneous mode is stable, one has
\[
C_{\sigma_i}^{\mathcal{S}}
=
J_{\sigma_i}^{\mathrm{IE}-\mathcal{S}}(0)>0.
\]
Hence, in this case, the criterion reduces to
\[
B_{\sigma_i}^{\mathcal{S}}<0
\qquad\text{and}\qquad
\Delta_{\sigma_i}^{\mathcal{S}}>0.
\]

This reduction is not available in the same form for the
\(\mathrm{EX}-\mathcal{S}\) families. After the change of variable introduced
above, the corresponding Jury functions become generalized polynomials in \(x\),
and, when \(D_v/D_u\) is an integer, polynomials of degree \(1+D_v/D_u\). In
regimes with large diffusion contrast, this degree may be very high, so that no
comparably simple mode-independent sign criterion is obtained. For this reason,
the following explicit conditions are derived only for the
\(\mathrm{IE}-\mathcal{S}\) families, while the \(\mathrm{EX}-\mathcal{S}\)
schemes will be assessed through the modal diagnostics in the numerical section.

\subsubsection{Explicit Euler reaction (EE)}

For the explicit Euler reaction step, \(A_{h_t}^{\mathrm{EE}}=I+h_tJ^*\), namely
\(a_{11}^{\mathrm{EE}}=1+h_tf_u\), \(a_{12}^{\mathrm{EE}}=h_tf_v\),
\(a_{21}^{\mathrm{EE}}=h_tg_u\), and \(a_{22}^{\mathrm{EE}}=1+h_tg_v\). Therefore, for the
IMEX scheme, i.e. the \(\mathrm{IE}\)-\(\mathrm{EE}\) scheme, the polynomial associated
with the \(i\)-th Jury quantity is
\[
P_{\sigma_i}^{\mathrm{IMEX}}(\mu)
=
A\mu^2+B_{\sigma_i}^{\mathrm{EE}}\mu+C_{\sigma_i}^{\mathrm{EE}},
\qquad i=1,2.
\] By using the notation in \eqref{eq:conditions_lin} and
introducing \(m:=D_ug_v+D_vf_u\), for \(\sigma_1=-1\), corresponding to the
first Jury quantity, one has
\[
B_{\sigma_1}^{\mathrm{EE}}=-h_t^2\, m,
\qquad
C_{\sigma_1}^{\mathrm{EE}}=h_t^2\, \delta^*,
\]
and hence
\[
\Delta_{\sigma_1}^{\mathrm{EE}}
=
h_t^4
\left[
m^2
-
4D_uD_v\, \delta^*
\right].
\]
For \(\sigma_2=+1\), corresponding to the second Jury quantity, one obtains
\[
B_{\sigma_2}^{\mathrm{EE}}
=
h_t\left[h_t\, m+2(D_u+D_v)\right],
\qquad 
C_{\sigma_2}^{\mathrm{EE}}
=
h_t^2\, \delta^*+2h_t\, \tau^*+4,
\]
with discriminant
\[
\Delta_{\sigma_2}^{\mathrm{EE}}
=
h_t^2 \left\{
h_t^2 \left[ m^2 - 4D_uD_v\delta^* \right]
+
4h_t \left[ (D_u+D_v)\,m - 2D_uD_v\, \tau^* \right]
+
4(D_u-D_v)^2
\right\}.
\]

\subsubsection{Adjoint symplectic or adjoint Poisson Euler reaction (ASE/APE)}

Setting \(s_g:=1-h_tg_v\), for the \(\mathrm{IE}\)-ASE/APE scheme the polynomial
associated with the \(i\)-th Jury quantity is
\[
P_{\sigma_i}^{\mathrm{ASE}}(\mu)
=
A\mu^2+B_{\sigma_i}^{\mathrm{ASE}}\mu+C_{\sigma_i}^{\mathrm{ASE}},
\qquad i=1,2.
\]
where \(A=h_t^2D_uD_v\). For \(\sigma_1=-1\), corresponding to the first Jury quantity, one has
\[
B_{\sigma_1}^{\mathrm{ASE}}
=
\frac{h_t^2}{s_g}\left(h_tD_v\delta^*-m\right),
\qquad
C_{\sigma_1}^{\mathrm{ASE}}
=
\frac{h_t^2}{s_g}\delta^*,
\]
and hence
\[
\Delta_{\sigma_1}^{\mathrm{ASE}}
=
\frac{h_t^4}{s_g^2}
\left[
m^2-4D_uD_v\delta^*
+
2h_tD_v\delta^*(D_ug_v-D_vf_u)
+
h_t^2D_v^2(\delta^*)^2
\right].
\]

For \(\sigma_2=+1\), corresponding to the second Jury quantity, and introducing
\(\ell:=D_v(f_u-2g_v)-D_ug_v\), one obtains
\[
B_{\sigma_2}^{\mathrm{ASE}}
=
\frac{h_t}{s_g}
\left[
-h_t^2D_v\delta^*+h_t\ell+2(D_u+D_v)
\right],
\qquad 
C_{\sigma_2}^{\mathrm{ASE}}
=
\frac{1}{s_g}
\left[
-h_t^2\delta^*+2h_t(f_u-g_v)+4
\right],
\]
with discriminant
\[
\Delta_{\sigma_2}^{\mathrm{ASE}}
=
\frac{h_t^2}{s_g^2}
\left\{
\left[
-h_t^2D_v\delta^*+h_t\ell+2(D_u+D_v)
\right]^2
-
4D_uD_v s_g
\left[
-h_t^2\delta^*+2h_t(f_u-g_v)+4
\right]
\right\}.
\]

\subsubsection{Symplectic or Poisson Euler reaction (SE/PE)}

Setting \(s_f:=1-h_tf_u\), for the \(\mathrm{IE}\)-SE/PE scheme the polynomial
associated with the \(i\)-th Jury quantity is
\[
P_{\sigma_i}^{\mathrm{SE}}(\mu)
=
A\mu^2+B_{\sigma_i}^{\mathrm{SE}}\mu+C_{\sigma_i}^{\mathrm{SE}},
\qquad i=1,2.
\]
For \(\sigma_1=-1\), corresponding to the first Jury
quantity, one has
\[
B_{\sigma_1}^{\mathrm{SE}}
=
\frac{h_t^2}{s_f}\left(h_tD_u\delta^*-m\right),
\qquad
C_{\sigma_1}^{\mathrm{SE}}
=
\frac{h_t^2}{s_f}\delta^*,
\]
and hence
\[
\Delta_{\sigma_1}^{\mathrm{SE}}
=
\frac{h_t^4}{s_f^2}
\left[
m^2-4D_uD_v\delta^*
+
2h_tD_u\delta^*(D_vf_u-D_ug_v)
+
h_t^2D_u^2(\delta^*)^2
\right].
\]

For \(\sigma_2=+1\), corresponding to the second Jury quantity, and introducing
\(
\ell_f:=D_ug_v-(2D_u+D_v)f_u,
\)
one obtains
\[
B_{\sigma_2}^{\mathrm{SE}}
=
\frac{h_t}{s_f}
\left[
-h_t^2D_u\delta^*+h_t\ell_f+2(D_u+D_v)
\right],
\qquad
C_{\sigma_2}^{\mathrm{SE}}
=
\frac{1}{s_f}
\left[
-h_t^2\delta^*+2h_t(g_v-f_u)+4
\right],
\]
with discriminant
\[
\Delta_{\sigma_2}^{\mathrm{SE}}
=
\frac{h_t^2}{s_f^2}
\left\{
\left[
-h_t^2D_u\delta^*+h_t\ell_f+2(D_u+D_v)
\right]^2
-
4D_uD_v s_f
\left[
-h_t^2\delta^*+2h_t(g_v-f_u)+4
\right]
\right\}.
\]

\subsubsection{Explicit variants of symplectic and Poisson Euler reaction (EVSE/EVPE)}

For the \(\mathrm{IE}\)-EVSE/EVPE scheme the polynomial associated with the
\(i\)-th Jury quantity is
\[
P_{\sigma_i}^{\mathrm{EVSE}}(\mu)
=
A\mu^2+B_{\sigma_i}^{\mathrm{EVSE}}\mu+C_{\sigma_i}^{\mathrm{EVSE}},
\qquad i=1,2.
\]
For \(\sigma_1=-1\), corresponding to the first Jury
function, one has
\[
B_{\sigma_1}^{\mathrm{EVSE}}
=
-h_t^2\left(m+h_tD_uf_vg_u\right),
\qquad
C_{\sigma_1}^{\mathrm{EVSE}}
=
h_t^2\delta^*,
\]
and hence
\[
\Delta_{\sigma_1}^{\mathrm{EVSE}}
=
h_t^4
\left[
m^2-4D_uD_v\delta^*
+
2h_tD_umf_vg_u
+
h_t^2D_u^2(f_vg_u)^2
\right].
\]

For \(\sigma_2=+1\), corresponding to the second Jury quantity, one obtains
\[
B_{\sigma_2}^{\mathrm{EVSE}}
=
h_t
\left[
h_t^2D_uf_vg_u+h_tm+2(D_u+D_v)
\right],
\qquad
C_{\sigma_2}^{\mathrm{EVSE}}
=
h_t^2(f_ug_v+f_vg_u)+2h_t \tau^*+4,
\]
with discriminant
\[
\Delta_{\sigma_2}^{\mathrm{EVSE}}
=
h_t^2
\left\{
4(D_u-D_v)^2
+
4h_t\left[(D_u+D_v)m-2D_uD_v\tau^*\right]
\right.
\]
\[
\left.
+
h_t^2
\left[
m^2+4D_u(D_u+D_v)f_vg_u
-
4D_uD_v(f_ug_v+f_vg_u)
\right]
+
2h_t^3D_umf_vg_u
+
h_t^4D_u^2(f_vg_u)^2
\right\}.
\]

\subsubsection{Explicit variants of adjoint symplectic and Poisson Euler reaction (EVASE/EVAPE)}

For the \(\mathrm{IE}\)-EVASE/EVAPE scheme the polynomial associated with the
\(i\)-th Jury quantity is
\[
P_{\sigma_i}^{\mathrm{EVASE}}(\mu)
=
A\mu^2+B_{\sigma_i}^{\mathrm{EVASE}}\mu+C_{\sigma_i}^{\mathrm{EVASE}},
\qquad i=1,2.
\]
By using the notation in \eqref{eq:conditions_lin}-\eqref{eq:algebraic_continum}, for \(\sigma_1=-1\), corresponding to the first Jury quantity, one has
\[
B_{\sigma_1}^{\mathrm{EVASE}}
=
-h_t^2\left(m+h_tD_vf_vg_u\right),
\qquad
C_{\sigma_1}^{\mathrm{EVASE}}
=
h_t^2\delta^*,
\]
and hence
\[
\Delta_{\sigma_1}^{\mathrm{EVASE}}
=
h_t^4
\left[
m^2-4D_uD_v\delta^*
+
2h_tD_vmf_vg_u
+
h_t^2D_v^2(f_vg_u)^2
\right].
\]

For \(\sigma_2=+1\), corresponding to the second Jury function, one obtains
\[
B_{\sigma_2}^{\mathrm{EVASE}}
=
h_t
\left[
h_t^2D_vf_vg_u+h_tm+2(D_u+D_v)
\right],
\qquad
C_{\sigma_2}^{\mathrm{EVASE}}
=
h_t^2(f_ug_v+f_vg_u)+2h_t\tau^*+4,
\]
with discriminant
\[
\Delta_{\sigma_2}^{\mathrm{EVASE}}
=
h_t^2
\left\{
4(D_u-D_v)^2
+
4h_t\left[(D_u+D_v)m-2D_uD_v\tau^*\right]
\right.
\]
\[
\left.
+
h_t^2
\left[
m^2+4D_v(D_u+D_v)f_vg_u
-
4D_uD_v(f_ug_v+f_vg_u)
\right]
+
2h_t^3D_vmf_vg_u
+
h_t^4D_v^2(f_vg_u)^2
\right\}.
\]

We recall that the first Jury quantity is closely related to the Turing
 instability mechanism of the continuous problem. In particular, its loss
 of positivity corresponds to the discrete counterpart of the diffusion-driven
instability observed at the continuous level. On the other hand, the second
 Jury condition is more directly connected to the specific structure of the
 time-discretization method. Its violation may therefore generate
scheme-dependent instabilities, which can occur even in parameter regimes
 where the continuous problem does not undergo a Turing instability.
 
The following result shows that, among the schemes
considered here, the IMEX scheme preserves exactly the sign structure of
the Turing polynomial \(h(\mu)\).
\begin{remark}[Exact preservation of the Turing polynomial sign by IMEX]
\label{prop:EE_J1_structure}
For the $\mathrm{IMEX}$ scheme, namely the $\mathrm{IE}-\mathrm{EE}$ scheme, the numerator polynomial
associated with the first Jury quantity satisfies, exactly and not only
asymptotically as \(h_t\to0\),
\[
P_{\sigma_1}^{\mathrm{IMEX}}(\mu) = h_t^2 \, h(\mu),
\]
where \(h(\mu)\) is given in \eqref{eq:h_cont}. Since, by
Proposition~\ref{prop:negative_jury_some_mu},
\[
\mathcal J_1^{\mathrm{IMEX}}(\mu)
=
\frac{P_{\sigma_1}^{\mathrm{IMEX}}(\mu)}{(1+h_tD_u\mu)(1+h_tD_v\mu)}
=
\frac{h_t^2}{(1+h_tD_u\mu)(1+h_tD_v\mu)}\,h(\mu),
\]
and the denominator \((1+h_tD_u\mu)(1+h_tD_v\mu)>0\) for every \(\mu\ge0\), it
follows that
\[
\operatorname{sign}\bigl(\mathcal J_1^{\mathrm{IMEX}}(\mu)\bigr)
=
\operatorname{sign}\bigl(h(\mu)\bigr),
\qquad \mu\ge 0.
\]
\end{remark}
This shows that, among the first-order splitting families considered here,
the IMEX scheme is the only one that preserves exactly, and not only
asymptotically as \(h_t\to0\), the sign of the polynomial \(h(\mu)\)
associated with the onset of Turing instability. Hence, with respect to the
first Jury condition, the IMEX discretization reproduces the same
diffusion-driven instability mechanism as the continuous reaction--diffusion
problem.

Nevertheless, this exact preservation does not rule out purely numerical
instabilities. Indeed, even the IMEX scheme may lose stability through the
violation of the second Jury condition.

\subsection{A sufficient dominance criterion for Turing-region preservation}
\label{subsec:dominance}

The first Jury quantity carries the continuous diffusion-driven mechanism: as
\(h_t\to0\), and in some cases exactly, its sign is governed by the continuous
Turing polynomial \(h(\mu)\). The second Jury quantity, by contrast, has no
continuous counterpart, since \(\mathcal J_2(\mu)\to4\) as \(h_t\to0\). A fully
discrete method can therefore reproduce the continuous Turing region only if the
second Jury condition does not introduce an additional, purely numerical
instability on non-homogeneous modes.

We make this observation precise for IMEX, which is the only scheme considered here whose
first Jury quantity reproduces the sign of \(h(\mu)\) exactly
(Remark~\ref{prop:EE_J1_structure}). Let \(\mathcal M_h^+\) denote the retained
positive modal set of the spatial discretization. Away from the neutral
boundary \(h(\mu)=0\), Turing-region preservation means that, under
homogeneous-mode stability, a mode \(\mu\in\mathcal M_h^+\) is unstable for the
fully discrete map exactly when \(h(\mu)<0\).

\begin{proposition}[Exact preservation for IMEX]
\label{prop:imex_exact_preservation}
Let the kinetic equilibrium be stable, \eqref{eq:conditions_lin}, and let
\(0<h_t<\bar h_{\mathrm{hom}}^{\mathrm{EE}}\). Then the IMEX scheme
\(\mathrm{IE\!-\!EE}\) is Turing-region preserving on \(\mathcal M_h^+\) if and
only if
\begin{equation}
\label{eq:imex_exact_condition}
\mathcal J_1(\mu)>0 \ \Longrightarrow\ \mathcal J_2(\mu)>0
\qquad\text{for all } \mu\in\mathcal M_h^+ .
\end{equation}
Equivalently, \(\mathcal J_2\) may become nonpositive only on modes of the
continuous Turing band, where \(\mathcal J_1<0\) already accounts for the
instability.
\end{proposition}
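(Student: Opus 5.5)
The plan is to reduce Turing-region preservation to a statement about the three Jury quantities on each retained mode, and then use the sign identity of Remark~\ref{prop:EE_J1_structure} to dispose of the first one. Fix $0<h_t<\bar h_{\mathrm{hom}}^{\mathrm{EE}}$. By Proposition~\ref{prop:homogeneous_thresholds}, the homogeneous mode is stable. Proposition~\ref{prop:J3_monotonicity} then gives $\mathcal J_3(\mu)>0$ for every $\mu\ge0$. By Proposition~\ref{prop:jury_turing}, a mode $\mu\in\mathcal M_h^+$ is therefore unstable for IMEX (in the sense $\rho(G_{h_t}(\mu))\ge1$, away from neutral cases) exactly when $\mathcal J_1(\mu)\le0$ or $\mathcal J_2(\mu)\le0$. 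By Remark~\ref{prop:EE_J1_structure}, $\operatorname{sign}\mathcal J_1(\mu)=\operatorname{sign}h(\mu)$ on all of $\mu\ge0$. This is the key structural input, and it holds exactly, not asymptotically.

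For sufficiency, assume \eqref{eq:imex_exact_condition} and take $\mu\in\mathcal M_h^+$ with $h(\mu)\neq0$. If $h(\mu)<0$, then $\mathcal J_1(\mu)<0$, so the mode is unstable; this is the continuous Turing band and it is reproduced. If $h(\mu)>0$, then $\mathcal J_1(\mu)>0$, and the hypothesis gives $\mathcal J_2(\mu)>0$. Together with $\mathcal J_3(\mu)>0$, this yields $\rho(G_{h_t}(\mu))<1$. Hence the unstable modes are exactly those with $h(\mu)<0$.

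For necessity, suppose preservation holds and take $\mu\in\mathcal M_h^+$ with $\mathcal J_1(\mu)>0$, so $h(\mu)>0$. Preservation forces $\mu$ to be stable, and the Jury criterion then gives $\mathcal J_2(\mu)>0$. The equivalent formulation follows by contraposition: $\mathcal J_2(\mu)\le0$ implies $\mathcal J_1(\mu)\le0$, that is $h(\mu)\le0$, which places $\mu$ in the closed Turing band where $\mathcal J_1$ already accounts for the instability.

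The main obstacle is the bookkeeping at the neutral boundary. Where $h(\mu)=0$ we have $\mathcal J_1(\mu)=0$, so the mode is marginal, and Turing-region preservation is only defined away from $h(\mu)=0$. I would state explicitly that both directions are applied to modes with $h(\mu)\neq0$. The contraposition in the equivalent formulation should be read as "$\mathcal J_2\le 0$ only where $h\le0$". Strict versus non-strict inequalities must be handled consistently with the strict form of Proposition~\ref{prop:jury_turing}, so that the case $\mathcal J_2(\mu)=0$ (an eigenvalue equal to $-1$) is counted as a loss of strict stability.
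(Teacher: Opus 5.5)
Your argument is correct and follows the paper's proof essentially step by step. Both use homogeneous stability for $h_t<\bar h_{\mathrm{hom}}^{\mathrm{EE}}$, then Proposition~\ref{prop:J3_monotonicity} to discard $\mathcal J_3$, then the exact sign identity $\operatorname{sign}\mathcal J_1=\operatorname{sign}h$ of Remark~\ref{prop:EE_J1_structure} to reduce preservation to excluding $\mathcal J_2$-driven instability on modes with $h(\mu)>0$. Your explicit treatment of the neutral set $h(\mu)=0$ and of the marginal case $\mathcal J_2(\mu)=0$ tightens the paper's bookkeeping, which states the instability test with strict inequalities only. This matters for the ``only if'' direction, where ``unstable'' must be read as $\rho(G_{h_t}(\mu))\ge1$.
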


\begin{proof}
Since \(h_t<\bar h_{\mathrm{hom}}^{\mathrm{EE}}\), the homogeneous mode is
stable and, by Proposition~\ref{prop:J3_monotonicity}, \(\mathcal J_3(\mu)>0\)
for all \(\mu\ge0\). A mode \(\mu\in\mathcal M_h^+\) is therefore unstable if and
only if \(\mathcal J_1(\mu)<0\) or \(\mathcal J_2(\mu)<0\). For IMEX,
\(\mathcal J_1(\mu)<0\) is equivalent to \(h(\mu)<0\)
(Remark~\ref{prop:EE_J1_structure}). Hence, the discrete and continuous
instabilities agree unless some mode has \(\mathcal J_1(\mu)>0\), that is
\(h(\mu)>0\), and is destabilized by \(\mathcal J_2(\mu)<0\); ruling out this
case is exactly \eqref{eq:imex_exact_condition}.
\end{proof}

A convenient
sufficient strengthening is the dominance
\(\mathcal J_2(\mu)\ge\mathcal J_1(\mu)\), which immediately gives
\(\mathcal J_2(\mu)>0\) whenever \(\mathcal J_1(\mu)>0\). Although stronger than
necessary, this condition reduces to explicit inequalities in the parameters and
in the time step.

The two Jury polynomials share the same leading coefficient, so their difference
is linear in the modal variable, and the dominance
\(\mathcal J_2(\mu)\ge\mathcal J_1(\mu)\) for all \(\mu\ge0\) is equivalent to
the pair of inequalities
\begin{equation}
\label{eq:dominance_strong}
a^{\mathcal S}_{11}+a^{\mathcal S}_{22}\ge 0,
\qquad
D_v a^{\mathcal S}_{11}+D_u a^{\mathcal S}_{22}\ge 0 ,
\end{equation}
obtained at \(\mu=0\) and at the first order in \(\mu\). At \(\mu=0\), both
implicit Euler and exact diffusion give \(b^{\mathcal D}_u(0)=b^{\mathcal D}_v(0)=1\), so
\(G_{h_t}(0)=A^{\mathcal S}_{h_t}\), and these two quantities admit the
homogeneous interpretation
\begin{equation}
\label{eq:dominance_I_jury}
a^{\mathcal S}_{11}+a^{\mathcal S}_{22}
=
\mathcal J_2(0)+\mathcal J_3(0)-2,
\end{equation}
\begin{equation}
\label{eq:dominance_II_split}
D_v a^{\mathcal S}_{11}+D_u a^{\mathcal S}_{22}
=
\frac{D_u+D_v}{2}\bigl(\mathcal J_2(0)+\mathcal J_3(0)-2\bigr)
+
\frac{D_v-D_u}{2}\bigl(a^{\mathcal S}_{11}-a^{\mathcal S}_{22}\bigr).
\end{equation}
Thus homogeneous-mode stability alone does not control the dominance condition:
the second inequality also depends on the diffusion-weighted diagonal balance of
the reaction amplification matrix.

For IMEX, with \(a^{\mathrm{EE}}_{11}=1+h_tf_u\),
\(a^{\mathrm{EE}}_{22}=1+h_tg_v\), \(\tau^*=f_u+g_v\) and \(m=D_ug_v+D_vf_u\),
the dominance inequalities \eqref{eq:dominance_strong} read
\begin{equation}
\label{eq:imex_dominance}
2+h_t\tau^*\ge0,
\qquad
(D_u+D_v)+h_t\,m\ge0 .
\end{equation}
By the strengthening above, \eqref{eq:imex_dominance} is then sufficient for the
sharp requirement \eqref{eq:imex_exact_condition}.

The role of the dominance, however, is confined to the Turing-stable regime.
Only in a Turing-stable regime
\(\mathcal J_2\) may turn negative
outside the band and the dominance becomes effective. There both inequalities of
\eqref{eq:imex_dominance} are active, and we set
\begin{equation}
\label{eq:imex_dominance_threshold}
h_t^{\mathrm{dom}}
:=
\begin{cases}
\min\!\left\{\dfrac{2}{|\tau^*|},\,\dfrac{D_u+D_v}{|m|}\right\},
\qquad m<0 \\
\dfrac{2}{|\tau^*|},
\qquad \qquad \qquad \qquad \quad0 < m < 2 \sqrt{D_u D_v \delta^*}.
\end{cases}
\end{equation}

\begin{tr}[Sufficient condition for Turing-region preservation of IMEX]
\label{thm:imex_sufficient}
Assume kinetic stability \eqref{eq:conditions_lin}, and let
\(\bar h_{\mathrm{hom}}^{\mathrm{EE}}\) be the homogeneous threshold of
Table~\ref{tab:hom_thresholds}. Then IMEX is Turing-region preserving on
\(\mathcal M_h^+\) provided that
\begin{itemize}
\item in a continuous Turing regime, where $m > 2 \sqrt{D_u D_v \delta^*}$,
\(\;0<h_t<\bar h_{\mathrm{hom}}^{\mathrm{EE}}\); or
\item in a Turing-stable regime, where $m < 2 \sqrt{D_u D_v \delta^*}$,
\(\;0<h_t<\min\bigl\{\,h_t^{\mathrm{dom}},\,\bar h_{\mathrm{hom}}^{\mathrm{EE}}\,\bigr\}\),
with \(h_t^{\mathrm{dom}}\) given by \eqref{eq:imex_dominance_threshold}.
\end{itemize}
\end{tr}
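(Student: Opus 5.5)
The plan is to reduce everything to the sharp criterion of Proposition~\ref{prop:imex_exact_preservation}. In both regimes the hypothesis includes \(0<h_t<\bar h_{\mathrm{hom}}^{\mathrm{EE}}\), so the homogeneous mode is stable, \(\mathcal J_3>0\) on all modes by Proposition~\ref{prop:J3_monotonicity}, and preservation is equivalent to \eqref{eq:imex_exact_condition}. Since the denominator \((1+h_tD_u\mu)(1+h_tD_v\mu)\) is positive, I will work directly with the numerator polynomials \(P_{\sigma_1}^{\mathrm{IMEX}}=h_t^2h(\mu)\) (Remark~\ref{prop:EE_J1_structure}) and \(P_{\sigma_2}^{\mathrm{IMEX}}(\mu)=A\mu^2+B^{\mathrm{EE}}_{\sigma_2}\mu+C^{\mathrm{EE}}_{\sigma_2}\). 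Their coefficients are the ones computed in the EE subsection.

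\emph{Turing regime, \(m>2\sqrt{D_uD_v\delta^*}\).} Here I will show that \(\mathcal J_2\) never vanishes, so \eqref{eq:imex_exact_condition} holds trivially.
\begin{itemize}
\item Since \(m>0\), the linear coefficient \(B^{\mathrm{EE}}_{\sigma_2}=h_t\bigl[h_tm+2(D_u+D_v)\bigr]\) is strictly positive.
\item The constant term \(C^{\mathrm{EE}}_{\sigma_2}=\mathcal J_2(0)\) is positive by homogeneous stability.
\item The leading coefficient \(A=h_t^2D_uD_v\) is positive.
\end{itemize}
Hence all three coefficients are positive and \(P_{\sigma_2}^{\mathrm{IMEX}}(\mu)>0\) for every \(\mu\ge0\). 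The same reasoning shows that the dominance threshold is superfluous whenever \(m>0\). I will not exploit this, since the statement imposes it for \(0<m<2\sqrt{D_uD_v\delta^*}\) anyway.

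\emph{Turing-stable regime, \(m<2\sqrt{D_uD_v\delta^*}\).} Here \(h(\mu)>0\) for all \(\mu\ge0\): either \(m\le0\), or \(m>0\) with negative discriminant \(m^2-4D_uD_v\delta^*<0\). Hence \(\mathcal J_1>0\) on every mode, and I must show \(\mathcal J_2>0\) everywhere. I will use the dominance strengthening. The two numerators share the leading coefficient \(A\), so
\[
P_{\sigma_2}^{\mathrm{IMEX}}(\mu)-P_{\sigma_1}^{\mathrm{IMEX}}(\mu)=2h_t\bigl[(D_u+D_v)+h_tm\bigr]\mu+2\bigl(2+h_t\tau^*\bigr).
\]
I will obtain this by subtracting \(B^{\mathrm{EE}}_{\sigma_1}=-h_t^2m\) from \(B^{\mathrm{EE}}_{\sigma_2}\), and \(C^{\mathrm{EE}}_{\sigma_1}=h_t^2\delta^*\) from \(C^{\mathrm{EE}}_{\sigma_2}\). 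This identity is exactly \eqref{eq:imex_dominance}, up to the positive factor \(2\). The two terms are then controlled as follows.
\begin{itemize}
\item Since \(\tau^*<0\), the condition \(h_t<2/|\tau^*|\) gives \(2+h_t\tau^*>0\).
\item If \(m<0\), the condition \(h_t<(D_u+D_v)/|m|\) gives \((D_u+D_v)+h_tm>0\).
\item If \(0<m\), the bracket \((D_u+D_v)+h_tm\) is automatically positive, which is why only \(2/|\tau^*|\) enters \eqref{eq:imex_dominance_threshold} in that case.
\end{itemize}
Thus \(P_{\sigma_2}^{\mathrm{IMEX}}>P_{\sigma_1}^{\mathrm{IMEX}}=h_t^2h(\mu)>0\), hence \(\mathcal J_2>0\) on \(\mathcal M_h^+\), and Proposition~\ref{prop:imex_exact_preservation} concludes.

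The main obstacle is bookkeeping rather than analysis. First, I must verify the explicit coefficient difference above. Second, I must check the degenerate borderline cases: \(m=0\) belongs to the Turing-stable regime but is not covered by \eqref{eq:imex_dominance_threshold}, and there the bracket equals \(D_u+D_v>0\), so using \(2/|\tau^*|\) suffices. I will also note that the neutral case \(m=2\sqrt{D_uD_v\delta^*}\) is excluded by the statement.
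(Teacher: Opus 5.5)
Your proposal is correct and follows the paper's route. You reduce to the sharp criterion of Proposition~\ref{prop:imex_exact_preservation} and, in the Turing-stable regime, use the dominance $P_{\sigma_2}^{\mathrm{IMEX}}-P_{\sigma_1}^{\mathrm{IMEX}}=2h_t\bigl[(D_u+D_v)+h_tm\bigr]\mu+2(2+h_t\tau^*)>0$, which $h_t<h_t^{\mathrm{dom}}$ guarantees. In the Turing regime, your observation that $m>0$ makes every coefficient of $P_{\sigma_2}^{\mathrm{IMEX}}$ positive, so that $\mathcal J_2>0$ on all modes, supplies the justification that the paper's proof only asserts as ``the homogeneous bound is the only active restriction.''
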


\begin{proof}
The bound \(h_t<\bar h_{\mathrm{hom}}^{\mathrm{EE}}\) secures homogeneous-mode
stability in both cases. 
If \(m> 2\sqrt{D_u D_v \delta^*}\), 
the
homogeneous bound is the only active restriction.
If \(m<2\sqrt{D_u D_v \delta^*}\), the
bound \(h_t<h_t^{\mathrm{dom}}\) makes both inequalities of
\eqref{eq:imex_dominance} hold, hence the dominance
\(\mathcal J_2\ge\mathcal J_1\) and, in turn,
\eqref{eq:imex_exact_condition}; the conclusion follows from
Proposition~\ref{prop:imex_exact_preservation}.
\end{proof}

The two motivating examples realize the two regimes of
Theorem~\ref{thm:imex_sufficient}. In Example~2, a real Turing regime, the dominance plays no role and the only active bound is the
homogeneous threshold \(\bar h_{\mathrm{hom}}^{\mathrm{EE}}\); IMEX detects the
continuous band. In Example~1, a Turing-stable regime with \(m<0\), the
dominance becomes effective: one finds \(h_t^{\mathrm{dom}}\approx0.30\), well
below the step \(h_t^{(1)}=0.67\); at that step the second inequality of
\eqref{eq:imex_dominance} fails, \(\mathcal J_2\) turns negative on a mode with
\(\mathcal J_1>0\), and the sharp condition \eqref{eq:imex_exact_condition} is
violated, producing the spurious \(\mathcal J_2\)-driven pattern.

The whole argument rests on the exact sign preservation of \(\mathcal J_1\), and
is therefore specific to IMEX. For the other first-order families
\(\mathcal J_1(\mu)=h_t^2h(\mu)+O(h_t^3)\)
(Proposition~\ref{prop:continuous_interpretation}), so the sign agreement with
\(h(\mu)\) is only asymptotic; even the sharp condition
\eqref{eq:imex_exact_condition} would then guarantee instability on the modes
where \(\mathcal J_1<0\), not on the continuous band \(h<0\). Controlling
\(\mathcal J_2\) cannot repair this, because the defect already lies in
\(\mathcal J_1\). In particular, in a continuous Turing regime. the adjoint
family may keep \(\mathcal J_1\) positive on modes of the Turing band, thereby
suppressing the instability that the discrete map should reproduce; no condition
on \(\mathcal J_2\) can restore that detection.

\section{Numerical investigation in the discretization parameters}
\label{sec:numerical_investigation}

The previous sections have shown that the continuous Turing polynomial and the
fully discrete Jury conditions answer two different questions. The continuous
criterion identifies the diffusion-driven instability of the differential
problem, whereas the fully discrete map may either create additional
non-homogeneous instabilities or suppress modes that are unstable in the
continuous model. The purpose of this section is to make this discrepancy
quantitative in the discretization parameters, and to measure how far each
scheme departs from the continuous Turing prediction as the time step varies.

The analysis developed in Section~\ref{sec:discrete_turing} also shows that the two diffusion
treatments considered in this paper require different numerical diagnostics.
For the IE--$\mathcal S$ family, the Jury conditions reduce to algebraic inequalities
involving the reaction parameters, the diffusion coefficients, and the time
step $h_t$. In this case $h_t$ can be treated as an additional parameter, and
the occurrence of a fully discrete non-homogeneous instability can be detected
without computing the discrete modal values $\mu_{ij}$.

By contrast, for the EX--$\mathcal S$ family, the Jury functions contain exponential
terms depending explicitly on $\mu$. Therefore, the stability diagnosis
depends on the retained spatial spectrum of the chosen mesh. In this case the
modal set has to be computed explicitly.

Accordingly, the numerical investigation is organized as follows. We first
recall the two Gierer--Meinhardt benchmark regimes and the homogeneous-mode
admissibility check used throughout the section. We then analyse the IE--$\mathcal S$
schemes through parameter-only algebraic criteria, and the EX--$\mathcal S$ schemes
through fixed-mesh modal diagnostics. Finally, we introduce area-based error
metrics that summarize, for each family, the distortion of the continuous
Turing region and the resulting choice of admissible time steps.

Throughout the section we consider the Gierer--Meinhardt system
\eqref{eq:gm_model_motivating} with homogeneous Neumann boundary conditions. The homogeneous
equilibrium and the reaction Jacobian are those given in
\eqref{eq:gm_equilibrium_motivating}--\eqref{eq:gm_jacobian_motivating}. 

\subsection{Benchmark setting and homogeneous-mode admissibility}
\label{subsec:benchmarks_homogeneous}

We use the two Gierer--Meinhardt parameter sets introduced in Section~\ref{sec:motivating_examples}. The
first one (see Section \ref{subsec:motivating_imex_spurious}) is a continuous stable benchmark in which IMEX produces a
stable spurious numerical pattern. The parameters are given in \eqref{eq:imex_spurious_parameters}.
The kinetic equilibrium is stable and the continuous Turing
polynomial \( h(\mu)\) is strictly positive for all $\mu\geq 0$. Numerically,
\[
\tau^*=-4.9419,\qquad
\delta^*=6,\qquad
D_ug_v+D_vf_u=-42.8023.
\]
Hence the continuous model has no diffusion-driven instability. 

The second benchmark, discussed in Section \ref{subsec:motivating_iease_spurious_stability}, is a continuous Turing regime in which the
ASE/APE family may spuriously suppress the unstable modes. The parameters are given in \eqref{eq:iease_spurious_parameters}.
Here the homogeneous equilibrium is kinetically stable, but the continuous
Turing polynomial is negative on a non-empty band. Numerically,
\[
\tau^*=-7.5742,\qquad
\delta^*=50,\qquad
D_ug_v+D_vf_u=378.1335.
\]

For both IE--$\mathcal{S}$ and EX--$\mathcal{S}$ schemes, the homogeneous mode corresponds to
$\mu=0$. Since both diffusion treatments satisfy
\(
b_u^{\mathcal D}(0)=b_v^{\mathcal D}(0)=1,
\)
the homogeneous-mode stability thresholds depend only on the reaction
amplification matrix $A_{h_t}^{\mathcal S}$. Thus, the thresholds in
Table~\ref{tab:hom_thresholds_section7} apply to both the IE--$\mathcal{S}$ and EX--$\mathcal{S}$
versions of each reaction family.

\begin{table}[htp]
\centering
\begin{tabular}{lcc}
\toprule
Reaction family
& Example 1 
& Example 2  \\
\midrule
EE
& $7.15\times 10^{-1}$
& $1.51\times 10^{-1}$ \\
SE/PE
& $1.20\times 10^{0\phantom{x}}$
& $1.28\times 10^{-1}$ \\
ASE/APE
& $5.55\times 10^{-1}$
& $6.25\times 10^{-1}$ \\

EVSE/EVPE/EVASE/EVAPE
& $4.05\times 10^{-1}$
& $1.39\times 10^{-1}$ \\
\bottomrule
\renewcommand{\arraystretch}{0.5}
\end{tabular}
\caption{Homogeneous-mode stability thresholds $\overline h_{\rm hom}$ for
the two Gierer--Meinhardt benchmarks. These values provide only an
admissibility check at $\mu=0$ and do not determine whether the numerical
scheme correctly detects the non-homogeneous Turing modes.}
\label{tab:hom_thresholds_section7}
\end{table}

These thresholds are necessary only to guarantee that the homogeneous
equilibrium is stable for the numerical map. They do not control the behaviour of non-homogeneous modes. This distinction is already visible in the two motivating examples. In Example~1, the time step $h_t^{(1)}=0.67$ lies below the
homogeneous thresholds of the EE and SE/PE families. Hence, at this time step,
IE--EE and IE--SE/PE preserve the stability of the homogeneous mode. However,
the same value of $h_t$ is not admissible for the ASE/APE and EV-type
families, which require smaller time steps in order to satisfy the
homogeneous-mode stability conditions. Therefore, the spurious IMEX pattern observed in \ref{subsec:motivating_imex_spurious} is not due to a
loss of homogeneous stability: IE--EE is stable at $\mu=0$, but loses
non-homogeneous stability through the second Jury condition. 
In Example~2, the chosen value $h_t^{(2)}=5\times 10^{-2}$ lies below the
homogeneous thresholds of all the families considered. Nevertheless, this does
not guarantee faithful pattern detection, as shown in \ref{subsec:motivating_iease_spurious_stability} for the ASE/APE schemes which 
suppress modes belonging to the continuous Turing band.

\subsection{Parameter-only diagnostics for the IE--$\mathcal S$ schemes}
\label{subsec:IE_algebraic_diagnostics}

We first consider the IE--$\mathcal{S}$ family. In view of
Proposition~\ref{prop:negative_jury_some_mu}, a fully discrete Turing
instability for an IE--$\mathcal{S}$ scheme occurs precisely at those parameters that
satisfy
\begin{equation}
\label{eq:IE_alg_criterion_72}
0<h_t<\overline{h}_{\rm hom}^{\mathcal S}
\qquad\text{and}\qquad
\exists\, i\in\{1,2\}
\ \text{such that}\
B_{\sigma_i}^{\mathcal S}<0,\ 
\Delta_{\sigma_i}^{\mathcal S}>0,
\end{equation}
where $\sigma_1=-1$ 
and $\sigma_2=+1$ identify the first and second Jury quantities $\mathcal J_1$ and $\mathcal J_2$, respectively. The first inequality is
the homogeneous-mode admissibility check at $\mu=0$, while the pair
$B_{\sigma_i}^{\mathcal S}<0$, $\Delta_{\sigma_i}^{\mathcal S}>0$ guarantees that the quadratic
$P_{\sigma_i}^{\mathcal S}(\mu;h_t)$ takes negative values for some $\mu>0$, i.e. that
the $i$-th Jury condition is violated on a non-homogeneous mode. This criterion
depends only on the reaction parameters, on the diffusion coefficients and on
the time step $h_t$, and not on the spatial mesh.



For the first benchmark, the continuous Turing set is empty because
$h(\mu)>0$ for every $\mu\geq0$. At the time step
$h_t^{(1)}=0.67$, the homogeneous mode is stable for the EE and SE/PE
families, whereas the ASE/APE and EV-type families require smaller time steps.
Therefore, the algebraic comparison at $h_t^{(1)}$ is restricted to IE--EE and
IE--SE/PE.

\begin{table}[htp]
\centering
\begin{tabular}{lrrrrl}
\toprule
Method
& $B_{\sigma_1}^{\mathcal S}$
& $\Delta_{\sigma_1}^{\mathcal S}$
& $B_{\sigma_2}^{\mathcal S}$
& $\Delta_{\sigma_2}^{\mathcal S}$
& Algebraic outcome \\
\midrule
IE--EE
& $19.214$
& $311.141$
& $-1.794$
& $1.682$
& $\mathcal J_2$ violated \\
IE--SE/PE
& $6.358$
& $22.865$
& $11.062$
& $96.902$
& no Jury violation \\
\bottomrule
\renewcommand{\arraystretch}{0.5}
\end{tabular}
\caption{Algebraic IE--$\mathcal{S}$ diagnostics for Example~1 at $h_t^{(1)}=0.67$.
The continuous model is Turing stable. The violation criterion is
$B_{\sigma_i}^{\mathcal S}<0$ and $\Delta_{\sigma_i}^{\mathcal S}>0$.}
\label{tab:IE_algebraic_example1}
\end{table}

For IMEX, namely IE--EE, the first Jury quantity preserves exactly the sign of the continuous Turing polynomial. Accordingly, in
Table~\ref{tab:IE_algebraic_example1}, $B_{\sigma_1}^{\mathrm{EE}}>0$, so the first
Jury condition is not violated. The instability is instead produced by the
second Jury condition, since
\[
B_{\sigma_2}^{\mathrm{EE}}=-1.794<0,
\qquad
\Delta_{\sigma_2}^{\mathrm{EE}}=1.682>0.
\]
The spurious pattern observed in the nonlinear simulation is therefore a
discrete $\mathcal J_2$-driven instability. By contrast, for
IE--SE/PE both $B_{\sigma_1}^{\mathrm{SE}}$ and $B_{\sigma_2}^{\mathrm{SE}}$ are positive, and
no algebraic non-homogeneous instability is detected at the same time step.

For the second benchmark, the continuous Turing interval is non-empty. At
$h_t^{(2)}=5\times10^{-2}$, the homogeneous-mode stability condition is
satisfied by all the IE--$\mathcal{S}$ families considered. Hence the algebraic diagnostic
can be applied to all schemes.

\begin{table}[htp]
\centering
\begin{tabular}{lrrrrl}
\toprule
Method
& $B_{\sigma_1}^{\mathcal S}$
& $\Delta_{\sigma_1}^{\mathcal S}$
& $B_{\sigma_2}^{\mathcal S}$
& $\Delta_{\sigma_2}^{\mathcal S}$
& Algebraic outcome \\
\midrule
IE--EE
& $-0.945$
& $0.694$
& $17.045$
& $285.155$
& $\mathcal J_1$ violated \\
IE--SE/PE
& $-1.069$
& $0.915$
& $17.169$
& $289.971$
& $\mathcal J_1$ violated \\
IE--EVSE/EVPE
& $-0.936$
& $0.676$
& $17.036$
& $285.433$
& $\mathcal J_1$ violated \\
IE--ASE/APE
& $0.036$
& $-0.132$
& $16.064$
& $252.579$
& no Jury violation \\
IE--EVASE/EVAPE
& $0.540$
& $0.091$
& $15.560$
& $237.325$
& no Jury violation \\
\bottomrule
\renewcommand{\arraystretch}{0.5}
\end{tabular}
\caption{Algebraic IE--$\mathcal{S}$ diagnostics for Example~2 at
$h_t^{(2)}=5\times10^{-2}$. The continuous model is Turing unstable. The
violation criterion is $B_{\sigma_i}^{\mathcal S}<0$ and
$\Delta_{\sigma_i}^{\mathcal S}>0$.}
\label{tab:IE_algebraic_example2}
\end{table}

In this case, IMEX detects the continuous instability through the first Jury
condition, since
\[
B_{\sigma_1}^{\mathrm{EE}}=-0.945<0,
\qquad
\Delta_{\sigma_1}^{\mathrm{EE}}=0.694>0.
\]
The same $\mathcal J_1$-driven detection is observed for IE--SE/PE and
IE--EVSE/EVPE. By contrast, the IE--ASE/APE family does not satisfy the
algebraic violation criterion:
\[
B_{\sigma_1}^{\mathrm{ASE}}=0.036>0,
\qquad
\Delta_{\sigma_1}^{\mathrm{ASE}}=-0.132<0,
\qquad
B_{\sigma_2}^{\mathrm{ASE}}=16.064>0.
\]
Thus, the corresponding Jury quantities remain positive, and the scheme may
suppress modes belonging to the continuous Turing band. The same qualitative
behaviour is found for IE--EVASE/EVAPE, for which
$B_{\sigma_1}^{\mathrm{EVASE}}>0$ and $B_{\sigma_2}^{\mathrm{EVASE}}>0$. 

\subsubsection{IE--$\mathcal S$ parameter-plane maps}
\label{subsubsec:IE_region_maps}

We now use an algebraic characterization of the fully discrete Turing region to classify the IE--$\mathcal{S}$ schemes in the
$(a,b)$ parameter plane. Let $\mathcal C$ denote the continuous Turing region,
namely the set of parameter pairs for which the homogeneous equilibrium is
kinetically stable and $h(\mu)<0$ for some $\mu>0$.

For the Gierer--Meinhardt parametrization used here, the continuous
boundaries can be computed from \eqref{eq:algebraic_continum}. In the region
maps, we display the Hopf boundary, corresponding to $\tau^*=0$,
\[
a_H=\frac{b(c-b)}{b+c},
\]
the determinant boundary, denoted by $a_D$, where $\delta^*=0$, and the
continuous Turing boundary $a_T$, obtained from
\[
m=2\sqrt{D_uD_v\delta^*},
\qquad
m:=D_ug_v+D_vf_u.
\]

For a fixed IE--$\mathcal{S}$ scheme and a fixed time step $h_t$, the parameter-plane
classification is based on the comparison between $\mathcal C$ and the corresponding algebraic fully discrete Turing region, defined as
\begin{equation}
\label{eq:D_alg_IE_maps_section7}
\mathcal D_{\rm alg}^{IE-S}(h_t)
=
\left\{
(a,b):
0<h_t<\overline{h}_{\rm hom}^{\mathcal S}
\ \text{and}\
\exists i\in\{1,2\}
\ \text{such that}\
B_{\sigma_i}^{\mathcal S}<0,\ 
\Delta_{\sigma_i}^{\mathcal S}>0
\right\}.
\end{equation}
The region \(\mathcal D_{\rm alg}^{\mathrm{IE}-\mathcal{S}}(h_t)\) is obtained only from the
model parameters, the diffusion coefficients and the time step. Therefore no
computation of the discrete modal values $\mu_{ij}$ is required for the IE--$\mathcal{S}$
classification.

Since for the IE--$\mathcal S$ family $h_t$ enters the algebraic region
\eqref{eq:D_alg_IE_maps_section7} as an additional parameter, the deformation of
the fully discrete Turing region relative to the fixed continuous Turing set
$\mathcal C$ is displayed, for both benchmarks, in
Figures~\ref{fig:IE_maps_example1_all}--\ref{fig:IE_maps_example2_all}, where each
row is read across decreasing time steps.
We follow the colour convention in Table~\ref{tab:colour_convention}. The white regions are not included in the Turing-region comparison, since they correspond to parameter values for which the selected fully discrete scheme is already inadmissible at the homogeneous mode. Thus, white points should not be interpreted as delayed detection of a continuous Turing instability, but as a loss of homogeneous-mode stability for the chosen method and time step.


\begin{table}[htp]
\centering
\small
\renewcommand{\arraystretch}{1.15}
\begin{tabular}{p{0.11\textwidth}p{0.19\textwidth}p{0.19\textwidth}p{0.36\textwidth}}
\toprule
\textbf{Colour} & \textbf{Continuous regime} & \textbf{Discrete regime} & \textbf{Meaning} \\
\midrule
Green  & Turing     & Turing     & Correct detection \\
Orange & non-Turing & Turing     & Spurious discrete instability \\
Blue   & Turing     & non-Turing & Suppression of a continuous instability \\
Grey   & non-Turing & non-Turing & Correct agreement in the non-Turing regime \\
White  & --         & --         & Homogeneous-mode inadmissibility for the selected scheme and time step \\
\bottomrule
\renewcommand{\arraystretch}{0.5cm}
\end{tabular}
\caption{Colour convention for the parameter-plane region maps. The classification compares the continuous Turing region with the fully discrete Turing region, while separately marking the loss of admissibility of the homogeneous discrete mode.}
\label{tab:colour_convention}
\end{table}

Around the benchmark of Example~1, corresponding to
\[
D_u=1,\qquad D_v=12,\qquad c=0.4,\qquad \gamma=10,
\qquad
(a,b)=(a_1,b_1)=(2,0.15),
\]
we consider all the IE--$\mathcal{S}$ families and track how their fully discrete Turing regions evolve as the time step decreases. 

For larger time steps, the homogeneous-mode stability condition $0<h_t<\overline{h}_{\rm{hom}}^{\mathcal{S}}$ is not satisfied near the benchmark point for the ASE/APE and EV-type families. These schemes therefore become admissible only for smaller time steps.

The corresponding region maps are collected in
Figure~\ref{fig:IE_maps_example1_all}, where the rows show the five IE--$\mathcal{S}$ families (IE--EE, IE--SE/PE, IE--EVSE/EVPE, IE--ASE/APE and IE--EVASE/EVAPE), and the columns correspond to the
decreasing time steps $h_t=0.67,\,0.5,\,0.4,\,0.1,\,0.05$, with the first column corresponding to $h_t^{(1)}=0.67$.
In each panel, the black curve represents
the continuous Turing boundary, the red curve the fully discrete Turing
boundary $\partial\mathcal D_{\rm {alg}}^{\rm{IE}-\mathcal S}(h_t)$, the crosses mark the
continuous Turing region $\mathcal C$, and the yellow dot
locates the parameters $(a_1,b_1)=(2,0.15)$. Because the relevant
features lie at large $a$ for large $h_t$ and shift toward smaller $a$ as
$h_t$ decreases, the $a$-window is widened in the first two columns and
progressively zoomed in the remaining ones, while the $b$-window is kept fixed.

At the  value $h_t^{(1)}=0.67$ (first column), the different IE--$\mathcal{S}$ families exhibit distinct behaviours. The IE--EE (IMEX) row shows an extended orange region surrounding the  point \((a_1,b_1)\). Since the
continuous model is Turing-stable there, this orange region is not a continuous
Turing effect: it is classified as spurious
(in Table~\ref{tab:colour_convention}) and corresponds to the $\mathcal J_2$-driven stable spurious
pattern observed in the nonlinear simulation of
Section~\ref{sec:motivating_examples}. 

By contrast, the IE--SE/PE row shows
no such artificial non-homogeneous instability at the point \( (a_1,b_1)\), in
agreement with the results of
Table~\ref{tab:IE_algebraic_example1}, where both
$B_{\sigma_1}^{\mathrm{SE}}$ and $B_{\sigma_2}^{\mathrm{SE}}$ are positive. As $h_t$ decreases
along each row, the spurious orange region of IE--EE contracts, and the red
discrete boundary approaches the black continuous one, so that both schemes
progressively recover the thin continuous Turing sliver (green). This confirms
that the spurious $\mathcal J_2$ instability seen at $h_t^{(1)}=0.67$ is a
large-step effect that is removed by refinement, and that the continuous and
fully discrete Turing predictions coincide in the limit of small $h_t$.

The behaviour of the remaining IE--$\mathcal{S}$ families can be described within the same framework. At the larger time steps $h_t=0.67$ and $h_t=0.5$, the homogeneous-mode stability condition 
$0<h_t<\overline{h}_{\rm hom}^{\mathcal S}$ fails near the point $(a_1,b_1)$ for the 
ASE/APE and EV-type families. Accordingly, the corresponding cells are left white, 
indicating homogeneous-mode inadmissibility, and these schemes become meaningful 
only at smaller time steps.

Once admissible, the EE-type and SE/PE-type rows exhibit the same 
anticipatory-then-recovering behaviour described above, characterized by the 
appearance of a spurious (orange) region at larger time steps and its 
progressive contraction under refinement.

By contrast, the adjoint-symplectic families (ASE/APE and EVASE/EVAPE) display, 
on this Turing-stable benchmark, only the residual delayed (blue) structure 
associated with their algebraic regions, without producing the strong 
anticipatory (orange) destabilization observed for IE--EE.

\begin{figure}[t]
\centering
\includegraphics[width=\textwidth]
{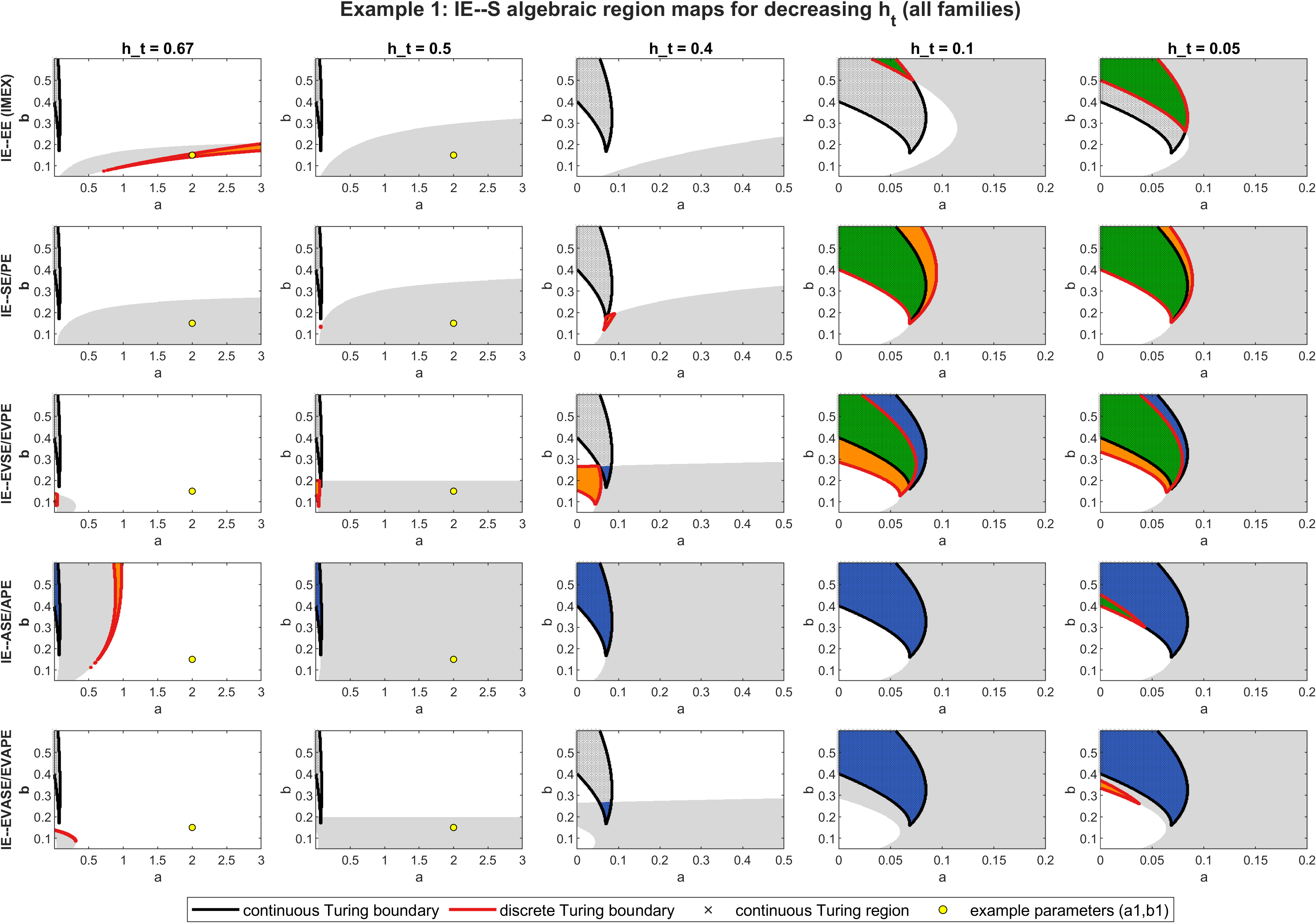}
\caption{IE--$\mathcal{S}$ algebraic region maps in the neighbourhood of Example~1 for all
five families, arranged as a grid: the rows are IE--EE (IMEX), IE--SE/PE,
IE--EVSE/EVPE, IE--ASE/APE and IE--EVASE/EVAPE, while the columns correspond to
the decreasing time steps $h_t=0.67,\,0.5,\,0.4,\,0.1,\,0.05$. 
Colours and boundaries follow the convention of Table~\ref{tab:colour_convention}: green denotes correct Turing
detection, orange a spurious discrete instability, blue a suppressed instability, and grey agreement in the non-Turing regime. The black and red curves are, respectively, the continuous and the fully discrete Turing boundaries, and the yellow dot marks $(a_1,b_1)=(2,0.15)$. The $a$-window is wide in the first two columns and progressively zoomed in the remaining ones, so that the extended orange region of IE--EE at $h_t^{(1)}=0.67$ remains visible while the small-$a$ behaviour at small $h_t$ is resolved; the $b$-window is fixed. White cells mark homogeneous-mode inadmissibility of the selected scheme at the corresponding time step, which affects the ASE/APE and EV-type rows at the largest time steps. As $h_t$ decreases, the admissible cells show the fully discrete Turing boundary approaching the continuous one.}
\label{fig:IE_maps_example1_all}
\end{figure}

Around the benchmark of Example~2, corresponding to
\[
D_u=1,\qquad D_v=160,\qquad c=1,\qquad \gamma=5,
\qquad
(a, b) = (a_2,b_2)=(0.6933,2),
\]
we start from the  time step $h_t^{(2)}=5\times10^{-2}$ and, as for
Example~1, follow the schemes as the time step is varied. At this benchmark the
homogeneous-mode stability condition is satisfied by all the IE--$\mathcal S$ families
considered, so any of them can be displayed directly. Here the continuous
Turing region is non-empty and is delimited by the black boundary running close
to the point \( (a_2, b_2)\) . 

Figure~\ref{fig:IE_maps_example2_all} reports the by-method,
decreasing-$h_t$ layout for all five IE--$\mathcal S$ representatives, with
\(
h_t=0.2,\;0.15,\;0.1,\;0.05,
\)
the last column corresponding to the benchmark value \(h_t^{(2)}\). The rows
reveal two distinct behaviours. The IE--EE, IE--SE/PE and IE--EVSE/EVPE
families progressively recover the continuous Turing band: the green region,
where the continuous diffusion-driven instability is correctly detected at the
fully discrete level, expands as \(h_t\) decreases, and the red discrete Turing
boundary moves towards the black continuous one. For the coarser steps,
especially \(h_t=0.2\) and \(h_t=0.15\), this agreement is only partial in some
rows, showing a visible time-step distortion of the parameter-plane
classification. Under refinement, however, the discrepancy is strongly reduced,
and for \(h_t=0.1\) and \(h_t=0.05\) these three families essentially reproduce
the continuous Turing region in the plotted window.

The adjoint-symplectic rows, IE--ASE/APE and IE--EVASE/EVAPE, display the
opposite qualitative behaviour. A large part of the continuous Turing band is
coloured in blue, meaning that the continuous model is Turing unstable whereas
the corresponding fully discrete scheme remains stable. This is the
parameter-plane manifestation of the delayed error already detected by the
algebraic indicators: the pattern-forming modes are not activated by the
discrete dynamics, and the continuous Turing instability is therefore artificially
suppressed. The effect is clearest for IE--ASE/APE, where the blue region covers
almost the whole continuous Turing band for the displayed time steps. For
IE--EVASE/EVAPE the geometry is slightly more mixed at the coarser steps, but
the same delayed-stabilization mechanism dominates the neighbourhood of the
benchmark. Thus the comparison confirms that the discrepancy is not a
single-step accident, but a systematic family-dependent effect controlled by the
time discretization.

\begin{figure}[t]
\centering
\includegraphics[width=\textwidth]{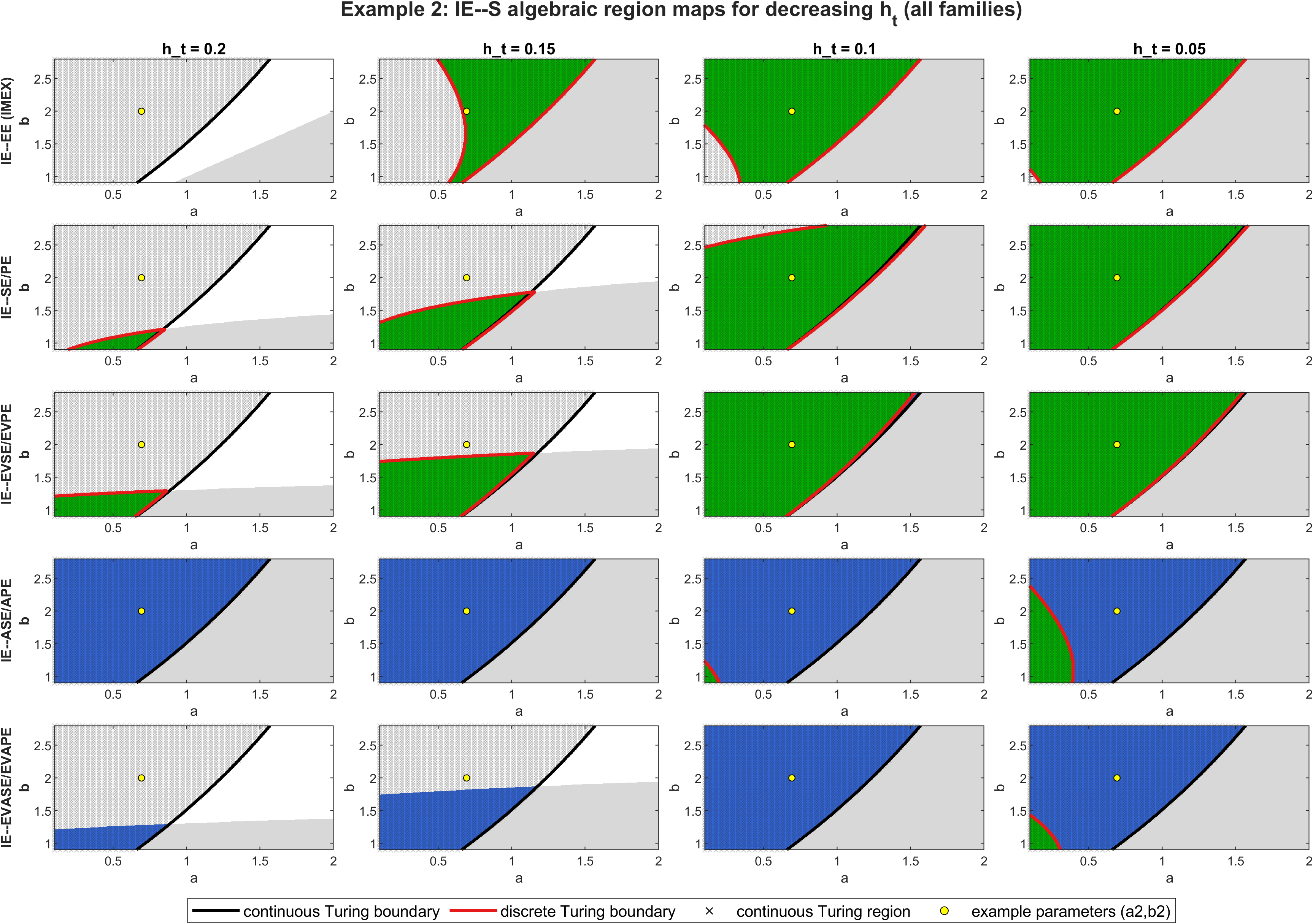}
\caption{IE--$\mathcal{S}$ algebraic region maps in the neighbourhood of Example~2 for all
five families. The rows are IE--EE (IMEX), IE--SE/PE, IE--EVSE/EVPE,
IE--ASE/APE and IE--EVASE/EVAPE, while the columns correspond to the decreasing
time steps $h_t=0.2,\,0.15,\,0.1,\,0.05$. The $(a,b)$-window is fixed across all
panels and colours follow the convention of
Table~\ref{tab:colour_convention}. The IE--EE, IE--SE/PE and
IE--EVSE/EVPE rows progressively recover the continuous Turing band: as
$h_t$ decreases, the green region expands and the red fully discrete Turing
boundary approaches the black continuous one. The IE--ASE/APE and
IE--EVASE/EVAPE rows instead show a delayed error, with large blue regions
inside the continuous Turing band, where the continuous model is Turing unstable
but the fully discrete scheme remains stable. This highlights the
family-dependent spurious stabilization produced by the adjoint-symplectic
variants.}
\label{fig:IE_maps_example2_all}
\end{figure}
\subsubsection{Summary for the IE--$\mathcal S$ family}
\label{subsubsec:IE_region_maps_htdependence}

The parameter-plane maps reveal two complementary, mutually exclusive failure
modes, each tied to a distinct Jury condition.

In the Turing-stable benchmark (Example~1), the only admissible representatives
at $h_t^{(1)}=0.67$ are IE--EE and IE--SE/PE. IE--EE develops an extended
spurious instability region around $(a_1,b_1)$, driven by the violation of the
second Jury condition ($B^{EE}_{\sigma_2}<0$, $\Delta^{EE}_{\sigma_2}>0$), while
$\mathcal J_1$ correctly reproduces the empty continuous Turing set. As $h_t$ is
refined this $\mathcal J_2$-driven region contracts monotonically and the
discrete boundary $\partial\mathcal{D}^{IE-\mathcal S}_{\mathrm{alg}}(h_t)$
collapses onto the continuous one: the spurious pattern is therefore a
large-step artefact, not a structural defect. IE--SE/PE shows no such region at
any admissible step.

In the continuous  Turing regime (Example~2), the roles separate by reaction family
rather than by time step. The EE, SE/PE and EVSE/EVPE rows track the continuous
Turing boundary through a $\mathcal J_1$ violation, the green region expanding
under refinement. The adjoint-symplectic rows IE--ASE/APE and IE--EVASE/EVAPE
instead leave the continuous band positive ($B^{ASE}_{\sigma_1}>0$,
$\Delta^{ASE}_{\sigma_1}<0$): they suppress the pattern-forming modes, producing
the blue region. Crucially, this suppression is not removed by
ordinary refinement at the same rate as the IE--EE artefact; it shrinks only as
$h_t$ becomes small enough to bring $B^{ASE}_{\sigma_1}$ below zero, so it is a
systematic, family-dependent effect controlled by the reaction substep, not a
single-step accident.

\subsection{Modal diagnostics for the EX--$\mathcal S$ schemes}
\label{subsec:EX_modal_diagnostics}

We now turn to the EX--$\mathcal S$ family. In this case the Jury functions have the form
\begin{equation}
\label{eq:EX_Jury_section7}
\mathcal J_i^{EX-\mathcal S}(\mu;h_t)
=
1+\sigma_i a_{11}^S e^{-h_tD_u\mu}
+\sigma_i a_{22}^S e^{-h_tD_v\mu}
+\det(A_{h_t}^S)e^{-h_t(D_u+D_v)\mu},
\qquad i=1,2.
\end{equation}
Equivalently, setting
\[
x=e^{-h_tD_u\mu},
\qquad
r=\frac{D_v}{D_u},
\]
one obtains the generalized polynomial
\begin{equation}
\label{eq:EX_generalized_polynomial_section7}
F_i^S(x)
=
1+\sigma_i a_{11}^S x
+\sigma_i a_{22}^S x^r
+\det(A_{h_t}^S)x^{1+r},
\qquad x\in(0,1).
\end{equation}
When $r$ is an integer, this is a polynomial in $x$ of degree $1+r$; otherwise
it is a generalized polynomial. In either case, and especially for large
diffusion contrasts, no simple mode-independent criterion comparable with
\eqref{eq:IE_alg_criterion_72} is obtained.

Therefore, for EX--$\mathcal{S}$ schemes, the discrete modal spectrum has to be computed.
Using the tensor-product finite-difference semi-discretization with
homogeneous Neumann boundary conditions, the one-dimensional discrete
Laplacians have eigenvalues $\lambda_i^x$ and $\lambda_j^y$, so that the
two-dimensional modal parameters $\mu_{ij}=-(\lambda_i^x+\lambda_j^y)\geq0$ of
\eqref{eq:mu_modes} are now evaluated explicitly on the chosen mesh.
We denote by
\(
\Lambda_h=\{\mu_{ij}>0\}
\)
the set of positive discrete spatial modes on the fixed mesh, and by
$\Lambda_h^M$ the retained set of the first $M$ positive modes.

For a fixed EX--$\mathcal{S}$ scheme, a fixed mesh and a fixed time step, we define the
fully discrete modal instability region as
\begin{equation}
\label{eq:D_EX_modal_section7}
\mathcal D_{h,M}^{EX-\mathcal S}(h_t)
=
\left\{
(a,b):
0<h_t<\overline h_{\rm hom}^{S}
\ \text{and}\
\exists \mu\in\Lambda_h^M
\ \text{such that}\
\mathcal J_1^{EX-\mathcal S}(\mu;h_t)<0
\ \text{or}\
\mathcal J_2^{EX-\mathcal S}(\mu;h_t)<0
\right\}.
\end{equation}
Unlike \eqref{eq:D_alg_IE_maps_section7}, the set
$\mathcal D_{h,M}^{EX-\mathcal S}$ depends on the spatial mesh and on the retained
modal truncation.

\subsubsection{EX--$\mathcal{S}$ modal margins at the two motivating benchmarks}
\label{subsubsec:EX_margins_benchmarks}

For the EX--$\mathcal{S}$ family the stability test cannot be reduced to a
parameter-only algebraic condition analogous to
\eqref{eq:D_alg_IE_maps_section7}. Indeed, the Jury quantities are evaluated
through the generalized functions $F_i^\mathcal S$ in
\eqref{eq:EX_generalized_polynomial_section7}, with
\[
x_\mu=e^{-h_tD_u\mu},
\qquad
\mu\in\Lambda_h^M.
\]
Hence the diagnostic depends on the retained modal set. In the computations
below we use the same finite-difference mesh as in the nonlinear benchmarks,
with $h=0.1$, and retain the first $M=500$ positive Neumann modes.
The five linearly distinct EX--$\mathcal S$ representatives are displayed in the order
EX--EE, EX--SE/PE, EX--EVSE/EVPE, EX--ASE/APE and EX--EVASE/EVAPE.

For the first benchmark the continuous Turing set is empty. It is therefore
not meaningful to restrict the first margin to a continuous Turing band. We
use instead the all-mode diagnostics
\begin{equation}
\label{eq:EX_margins_example1}
m_{1,\mathrm{all}}^{EX-\mathcal{S}}(h_t)
=
\min_{\mu\in\Lambda_h^M}
F_1^\mathcal S(x_\mu),
\qquad
m_2^{EX-\mathcal S}(h_t)
=
\min_{\mu\in\Lambda_h^M}
F_2^\mathcal S(x_\mu).
\end{equation}
A negative value of either margin signals that the fully discrete EX--$\mathcal S$ map
amplifies some retained non-homogeneous mode, although the continuous problem
is Turing-stable.

Figure~\ref{fig:EX_Fi_example1} shows the two margins as functions of the time
step. The marker on the horizontal axis indicates the value
$h_t^{(1)}=0.67$, while the coloured dashed vertical lines indicate the
method-dependent homogeneous-mode thresholds. The EX--EE curve loses
non-homogeneous stability before the SE/PE-type curve, consistently with the
unbounded transient behaviour already observed for EX--EE in
Example~1. By contrast, EX--SE/PE keeps a substantially larger admissible
window and does not display the same early spurious destabilization. The
explicit-variant SE/PE family behaves close to the SE/PE representative,
whereas the adjoint families are more constrained by homogeneous-mode
admissibility. Thus, in the Turing-stable benchmark, the most favourable
EX--$\mathcal{S}$ behaviour is provided by EX--SE/PE: it delays the onset of artificial
fully discrete instability and preserves the homogeneous equilibrium over a
larger range of time steps than EX--EE and the adjoint variants.

\begin{figure}[htp]
\centering
\includegraphics[width=\textwidth]{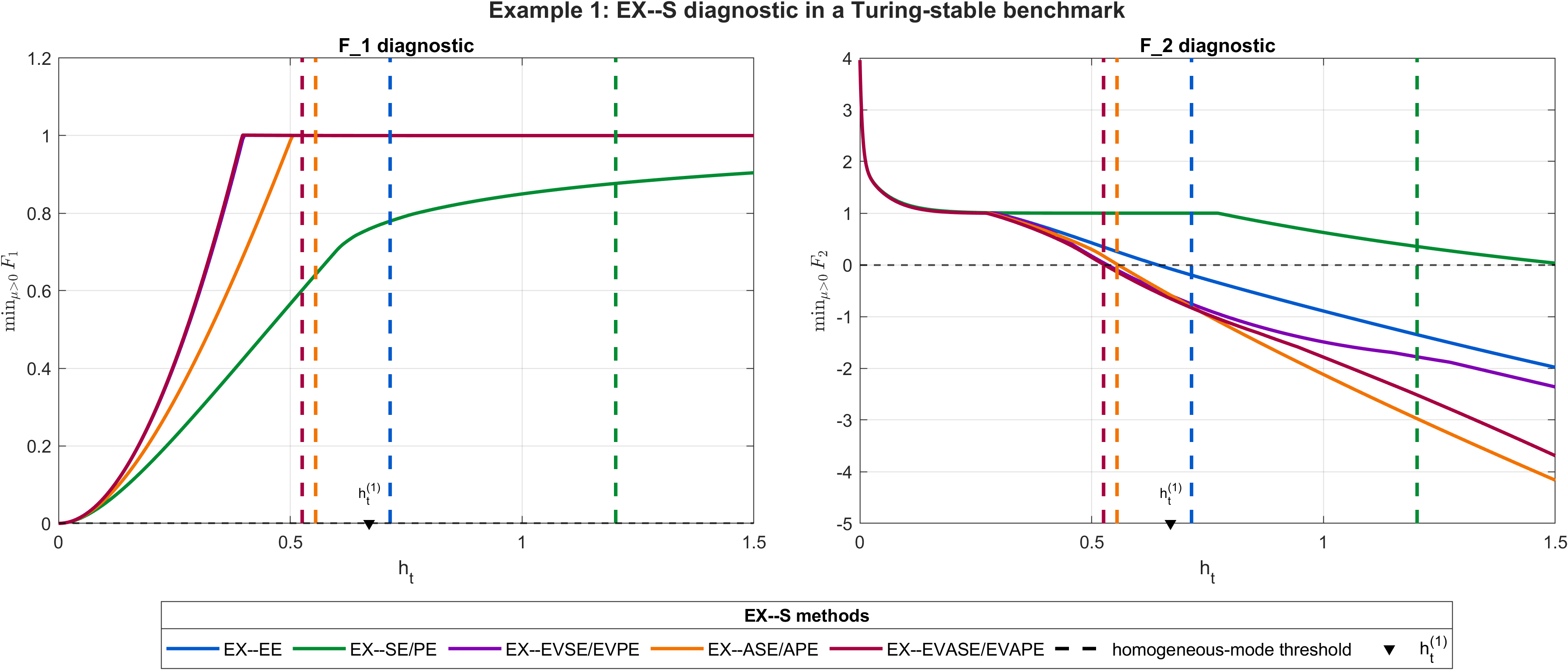}
\caption{EX--$\mathcal{S}$ modal diagnostics for Example~1. Since the continuous model is
Turing-stable, the left panel reports the all-mode margin
$m_{1,\mathrm{all}}^{EX-\mathcal{S}}$ and the right panel reports $m_2^{EX-\mathcal{S}}$. The
horizontal dashed line marks the zero level. A negative value indicates a
fully discrete non-homogeneous instability on the retained modes. The small
black marker on the horizontal axis identifies $h_t^{(1)}$, while the coloured
dashed vertical lines are the homogeneous-mode thresholds of the individual
EX--$\mathcal{S}$ schemes. EX--SE/PE has the most robust behaviour in this benchmark,
because it avoids the early spurious destabilization observed for EX--EE and
retains a larger homogeneous admissibility interval than the adjoint-type
families.}
\label{fig:EX_Fi_example1}
\end{figure}

For the second benchmark the continuous Turing interval is non-empty. We
therefore introduce the retained continuous Turing set
\[
\mathcal T_h^M
=
\{\mu\in\Lambda_h^M:h(\mu)<0\},
\]
and use the margins
\begin{equation}
\label{eq:EX_margins_example2}
m_{1,T}^{EX-\mathcal S}(h_t)
=
\min_{\mu\in\mathcal T_h^M}
F_1^S(x_\mu),
\qquad
m_2^{EX-\mathcal S}(h_t)
=
\min_{\mu\in\Lambda_h^M}
F_2^S(x_\mu).
\end{equation}
A negative value of $m_{1,T}^{EX-\mathcal S}$ means that the EX--$\mathcal S$ scheme detects the
continuous Turing mechanism on the retained Turing modes. A positive value
means that the scheme suppresses all retained modes that the continuous model
would amplify.

The behaviour in Figure~\ref{fig:EX_Fi_example2} separates the five EX--$\mathcal{S}$
families into two groups. The EX--EE, EX--SE/PE and EX--EVSE/EVPE curves of
$m_{1,T}^{EX-\mathcal{S}}$ become negative near the  step
$h_t^{(2)}=5\times10^{-2}$, and therefore detect the continuous
Turing instability. On the other hand, EX--ASE/APE and EX--EVASE/EVAPE remain
positive in the same range of time steps, showing that the adjoint families
can suppress the continuous Turing band also when the diffusive subflow is
treated exactly. The second margin remains positive in the plotted range, so
the relevant mechanism in Example~2 is not a discrete $F_2$-driven
instability, but the failure of the first EX--$\mathcal{S}$ margin to become negative on
the retained Turing modes. This confirms that the spurious stabilization of
the adjoint-symplectic family is mainly encoded in the reaction amplification
matrix and is not an artefact of implicit Euler diffusion.

\begin{figure}[htp]
\centering
\includegraphics[width=\textwidth]{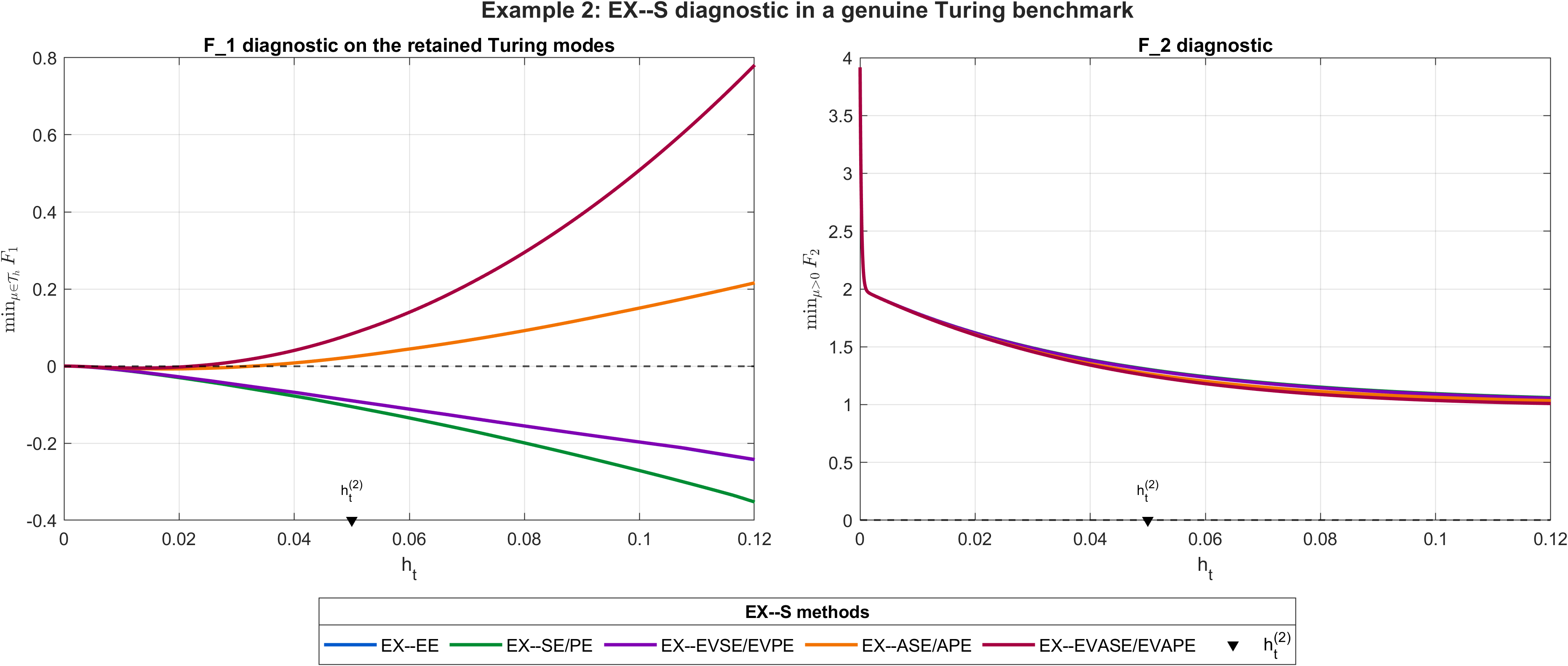}
\caption{EX--$\mathcal{S}$ modal diagnostics for Example~2. The left panel reports the
margin $m_{1,T}^{EX-\mathcal S}$ restricted to the retained continuous Turing modes,
while the right panel reports the all-mode margin $m_2^{EX-\mathcal S}$. The marker on
the horizontal axis identifies $h_t^{(2)}$. The EX--EE, EX--SE/PE and
EX--EVSE/EVPE curves become negative in the first panel and therefore detect
the continuous Turing instability. The EX--ASE/APE and EX--EVASE/EVAPE curves
remain positive, indicating spurious stabilization of the continuous Turing
band. Since the second margin stays positive, the observed distortion is a
suppression of the continuous $F_1$ mechanism rather than an $F_2$-driven
spurious instability.}
\label{fig:EX_Fi_example2}
\end{figure}

The two benchmarks lead to the same qualitative conclusion as the IE--$\mathcal{S}$
analysis. Among the EX--$\mathcal{S}$ schemes, the SE/PE representative gives the most
balanced response: in the Turing-stable benchmark it avoids the early
spurious destabilization of EX--EE, while in the Turing benchmark it
detects the unstable band instead of suppressing it as the adjoint families
do. The EVSE/EVPE variant is close to this behaviour in Example~2, but the
SE/PE scheme remains the cleaner choice because it combines correct detection
with a more robust behaviour in the Turing-stable test. Therefore, for the two
benchmarks considered here, the SE/PE family is the preferable first-order
reaction solver, both with implicit Euler diffusion and with exact diffusion.

\subsubsection{EX--$\mathcal{S}$ parameter-plane maps}
\label{subsubsec:EX_region_maps}

The EX--$\mathcal{S}$ parameter-plane maps are computed from
\eqref{eq:D_EX_modal_section7}. In contrast with the IE--$\mathcal{S}$ maps, the
classification is mesh-dependent. In the computations we use a fixed mesh
$h=0.1$ and retain the first $M=500$ positive two-dimensional Neumann
modes.

Around Example~1, the by-method maps are collected in
Figure~\ref{fig:EX_maps_example1}, with the five EX--$\mathcal{S}$ families on the rows and
the decreasing time steps $h_t=0.67,\,0.5,\,0.4,\,0.1,\,0.05$ on the columns,
using the same per-column $a$-window as the IE--$\mathcal{S}$ Figure~\ref{fig:IE_maps_example1_all}.
The EX--EE scheme may still produce red regions associated with spurious
non-homogeneous instability; however, the exact treatment of the diffusive flow
changes the modal damping factors and therefore modifies the extent of the
unstable regions with respect to the IE--EE case. As $h_t$ decreases the
admissible cells contract the spurious red region and recover the thin
continuous Turing sliver, mirroring the IE--$\mathcal{S}$ behaviour but with a mesh-dependent
classification.

Around Example~2, the by-method maps are collected in
Figure~\ref{fig:EX_maps_example2}, with the same row/column organization and the
fixed $(a,b)$-window of the IE--$\mathcal{S}$ Figure~\ref{fig:IE_maps_example2_all}. The
EX--ASE/APE and EX--EVASE/EVAPE families exhibit blue regions inside the
continuous Turing set, similarly to the corresponding IE--ASE/APE schemes. This
means that the spurious stabilization mechanism is not only a consequence of
implicit Euler diffusion, but is already encoded in the adjoint reaction
amplification matrix combined with the discrete Jury conditions; as $h_t$
decreases, the blue suppression region shrinks, confirming that the artefact is
attenuated under refinement.
\begin{figure}[htp]
\centering
\includegraphics[width=\textwidth]{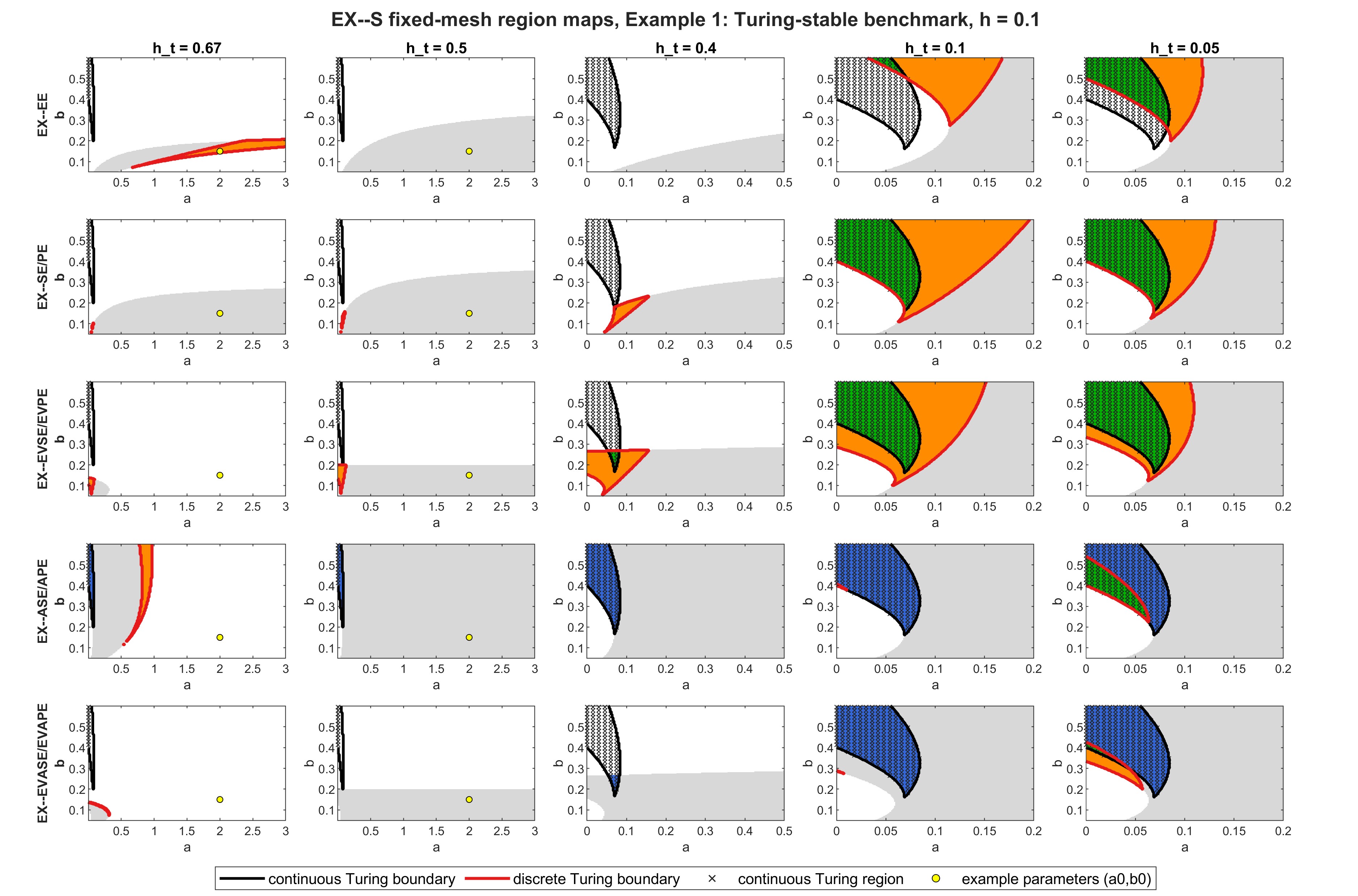}
\caption{EX--$\mathcal{S}$ fixed-mesh ($h=0.1$) algebraic region maps in the
neighbourhood of Example~1 for all five families, arranged as a grid: the rows
are EX--EE, EX--SE/PE, EX--EVSE/EVPE, EX--ASE/APE and EX--EVASE/EVAPE, while the
columns correspond to the decreasing time steps $h_t=0.67,\,0.5,\,0.4,\,0.1,\,0.05$.
Colours and boundaries follow the convention of
Table~\ref{tab:colour_convention}; the $a$-window is wide in the first
two columns and progressively zoomed in the remaining ones, while the $b$-window
is fixed, as in Figure~\ref{fig:IE_maps_example1_all}. White cells mark
homogeneous-mode inadmissibility of the selected scheme at the corresponding
time step. As $h_t$ decreases, the spurious red region of the EE-type and
SE/PE-type rows contracts and the fully discrete Turing boundary approaches the
continuous one.}
\label{fig:EX_maps_example1}
\end{figure}

\begin{figure}[htp]
\centering
\includegraphics[width=\textwidth]{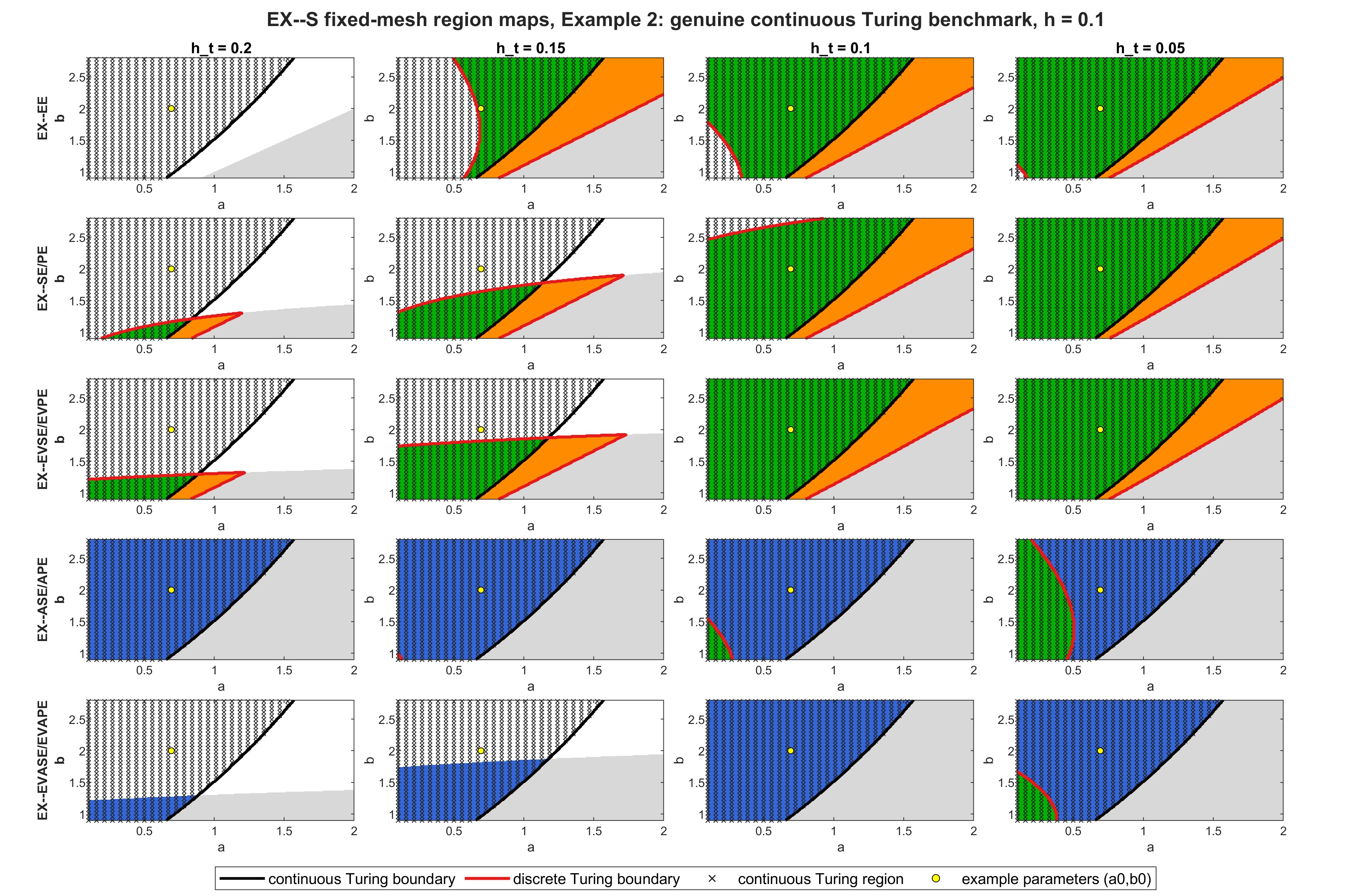}
\caption{EX--$\mathcal{S}$ fixed-mesh ($h=0.1$) algebraic region maps in the
neighbourhood of Example~2 for all five families, arranged as a grid: the rows
are EX--EE, EX--SE/PE, EX--EVSE/EVPE, EX--ASE/APE and EX--EVASE/EVAPE, while the
columns correspond to the decreasing time steps $h_t=0.2,\,0.15,\,0.1,\,0.05$.
The $(a,b)$-window is fixed across all panels, matching
Figure~\ref{fig:IE_maps_example2_all}, and colours follow the convention of
Table~\ref{tab:colour_convention}. The EX--EE, EX--SE/PE and
EX--EVSE/EVPE rows correctly detect the continuous Turing band (green, with the
red discrete boundary tracking the black continuous one), whereas the
EX--ASE/APE and EX--EVASE/EVAPE rows suppress it (blue). The blue regions
identify parameter values for which the continuous model is Turing unstable but
the EX--$\mathcal{S}$ scheme does not amplify any retained non-homogeneous mode; as $h_t$
decreases this delayed suppression shrinks.}
\label{fig:EX_maps_example2}
\end{figure}
\subsubsection{Summary for the EX--$\mathcal S$ family}
\label{subsubsec:EX_summary}
Because the EX--$\mathcal S$ Jury functions are generalized polynomials in
$x=e^{-h_t D_u\mu}$, see \eqref{eq:EX_generalized_polynomial_section7}, the
diagnostic is mesh-dependent and is read through the modal margins
$m^{\rm{EX}-\mathcal S}_{1}$ and $m^{\rm{EX}-\mathcal S}_{2}$ rather than through
parameter-only sign conditions. Two facts emerge from the modal diagnostics of
Figures~\ref{fig:EX_Fi_example1}--\ref{fig:EX_Fi_example2} and
from the parameter-plane maps of
Figures~\ref{fig:EX_maps_example1}--\ref{fig:EX_maps_example2}.

First, the qualitative dichotomy of the IE--$\mathcal S$ analysis survives the
exact treatment of diffusion. In the Turing-stable benchmark, EX--EE loses
non-homogeneous stability earliest, consistently with the unbounded transient
already seen in the nonlinear simulation, whereas EX--SE/PE retains the widest
admissible window. In the continuous Turing regime, EX--EE, EX--SE/PE and
EX--EVSE/EVPE drive $m^{\rm{EX}-\mathcal S}_{1,T}$ negative on the retained Turing
modes and detect the instability, while EX--ASE/APE and EX--EVASE/EVAPE keep it
positive and suppress the band.

Second, and this is the point specific to EX--$\mathcal S$, the second margin
$m^{\rm{EX}-\mathcal S}_{2}$ stays positive throughout the plotted range in
Example~2. Hence, the adjoint suppression is not an artefact of implicit
Euler diffusion: it is encoded in the reaction amplification matrix
$A_{h_t}^{\mathcal S}$ and persists when the diffusive subflow is exactly integrated. The
blue suppression regions of the EX--ASE/APE rows mirror their IE counterparts
and likewise shrink under refinement. The exact diffusion treatment changes the
modal damping factors, and therefore the precise extent of the unstable
regions, but not the family-level conclusion.

\subsection{Green-coverage sensitivity with respect to the time step}
\label{subsec:green_coverage_section7}

The parameter-plane maps discussed above show that the different schemes do
not preserve the continuous Turing region with the same accuracy when the time
step varies. We quantify this behaviour by focusing on a single indicator: the
\emph{green coverage}, namely the portion of the continuous Turing region, which is also detected as Turing unstable by the fully discrete scheme.

Let $\Omega_{ab}$ be the sampled parameter window and let
$\mathcal C\subset\Omega_{ab}$ denote the continuous Turing region. For a
scheme $\mathcal S$ and a time step $h_t$, let $\mathcal D^{\mathcal S}(h_t)$ be the corresponding
fully discrete Turing region. We define
\begin{equation}
\label{eq:green_coverage_section74}
G^\mathcal S(h_t)
=
\frac{|\mathcal C\cap \mathcal D^{\mathcal S}(h_t)|}{|\mathcal C|}.
\end{equation}
Thus $G^\mathcal{S}(h_t)\in[0,1]$. A value close to one means that most of the
continuous Turing region is retained by the fully discrete scheme, whereas a
value close to zero means that the continuous Turing region is almost entirely
missed. This index is therefore the quantitative counterpart of the green
portion of the parameter-plane maps.

In the following figures, $G^\mathcal S(h_t)$ is plotted for
$h_t\in[10^{-2},0.5]$. The horizontal axis is displayed in decreasing order:
larger time steps are placed on the left, while smaller time steps are placed
on the right. This representation makes it easier to read the progressive
recovery of the continuous Turing region as $h_t$ decreases.

\begin{figure}[tp]
\centering
\begin{minipage}[t]{0.48\textwidth}
\centering
\includegraphics[width=\textwidth]{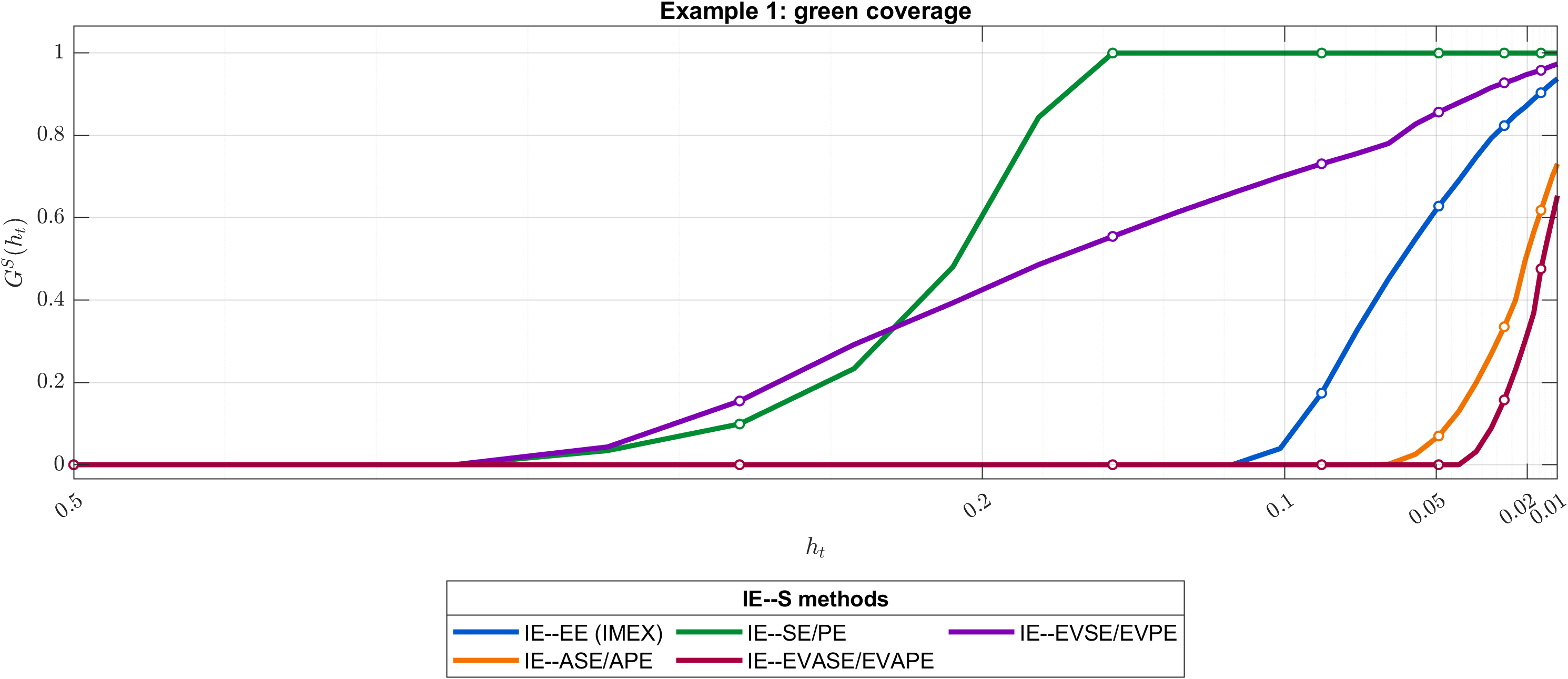}
\subcaption*{(a) Example 1}
\end{minipage}
\hfill
\begin{minipage}[t]{0.48\textwidth}
\centering
\includegraphics[width=\textwidth]{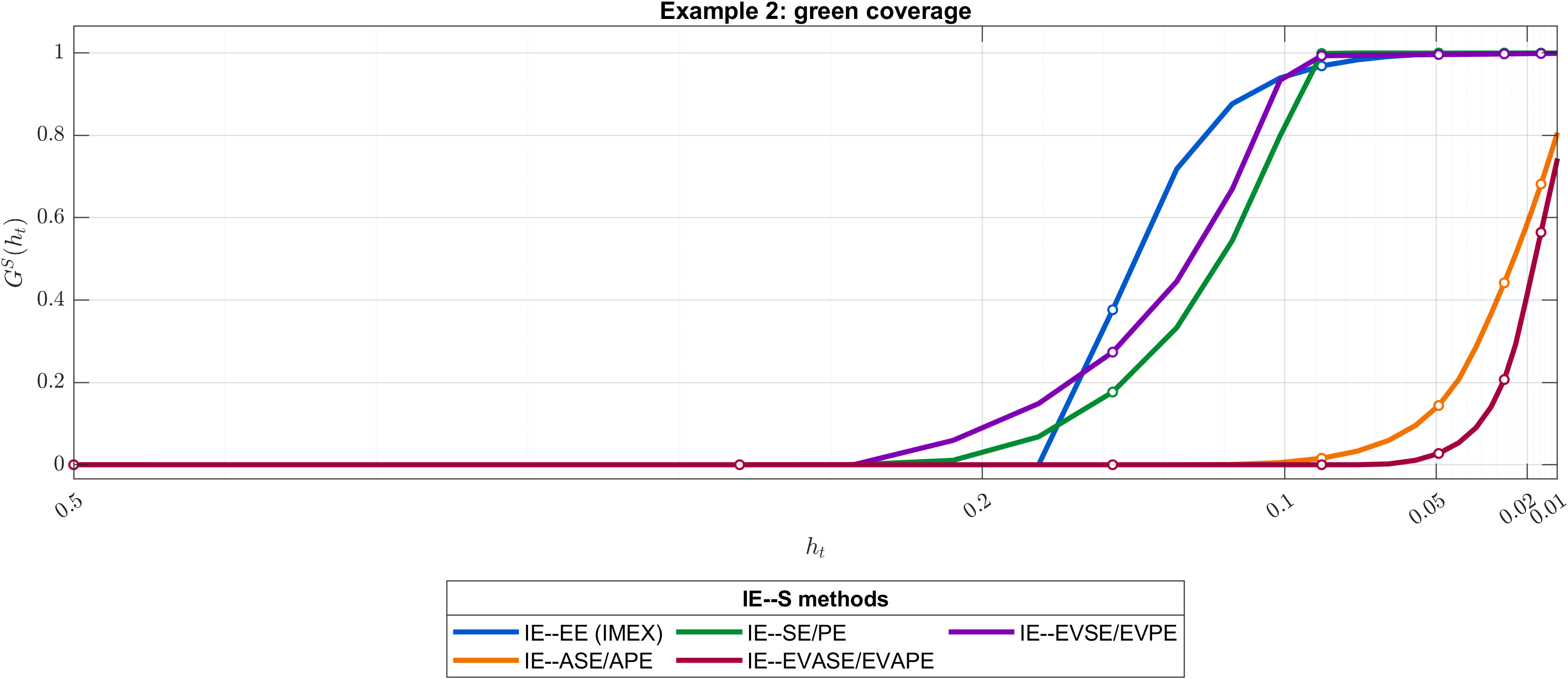}
\subcaption*{(b) Example 2}
\end{minipage}
\caption{Green coverage $G^\mathcal S(h_t)$ for the IE--$\mathcal{S}$ family. Panel (a) refers to
the neighbourhood of Example~1, while panel (b) refers to the 
continuous Turing regime of Example~2.}
\label{fig:green_coverage_IE}
\end{figure}

Figure~\ref{fig:green_coverage_IE} compares the IE--$\mathcal{S}$ schemes. In both
examples, the curves show how rapidly each method recovers the continuous
Turing region as the time step is reduced. In Example~1, the SE/PE and
EVSE/EVPE variants display a wider range of time steps for which the green
coverage is close to one, whereas the EE and adjoint variants lose coverage
more rapidly when $h_t$ increases. In Example~2, the same indicator highlights
the different behaviour of the adjoint-symplectic variants: ASE/APE and
EVASE/EVAPE require smaller time steps before recovering a substantial part of
the continuous Turing region, while SE/PE remains more robust over the tested
range.

\begin{figure}[htp]
\centering
\begin{minipage}[t]{0.48\textwidth}
\centering
\includegraphics[width=\textwidth]{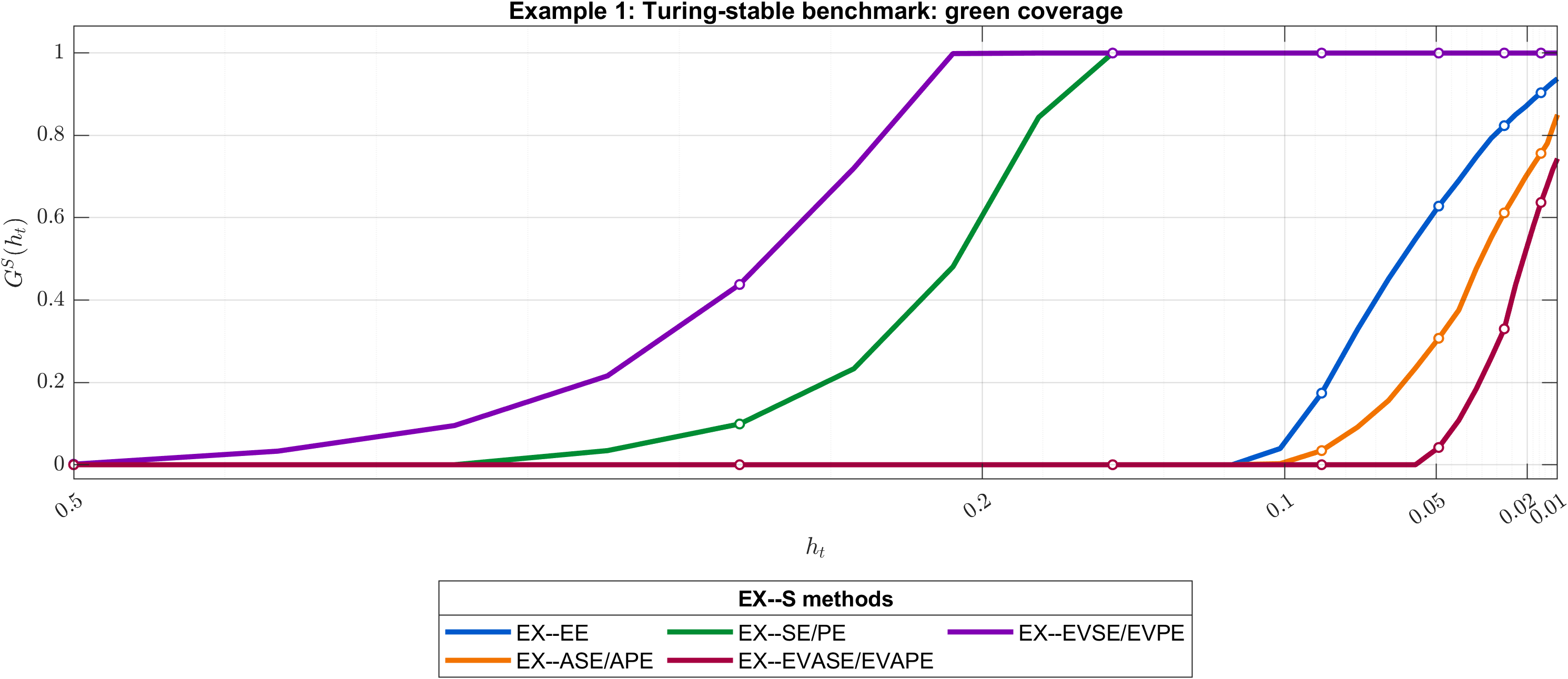}
\subcaption*{(a) Example 1}
\end{minipage}
\hfill
\begin{minipage}[t]{0.48\textwidth}
\centering
\includegraphics[width=\textwidth]{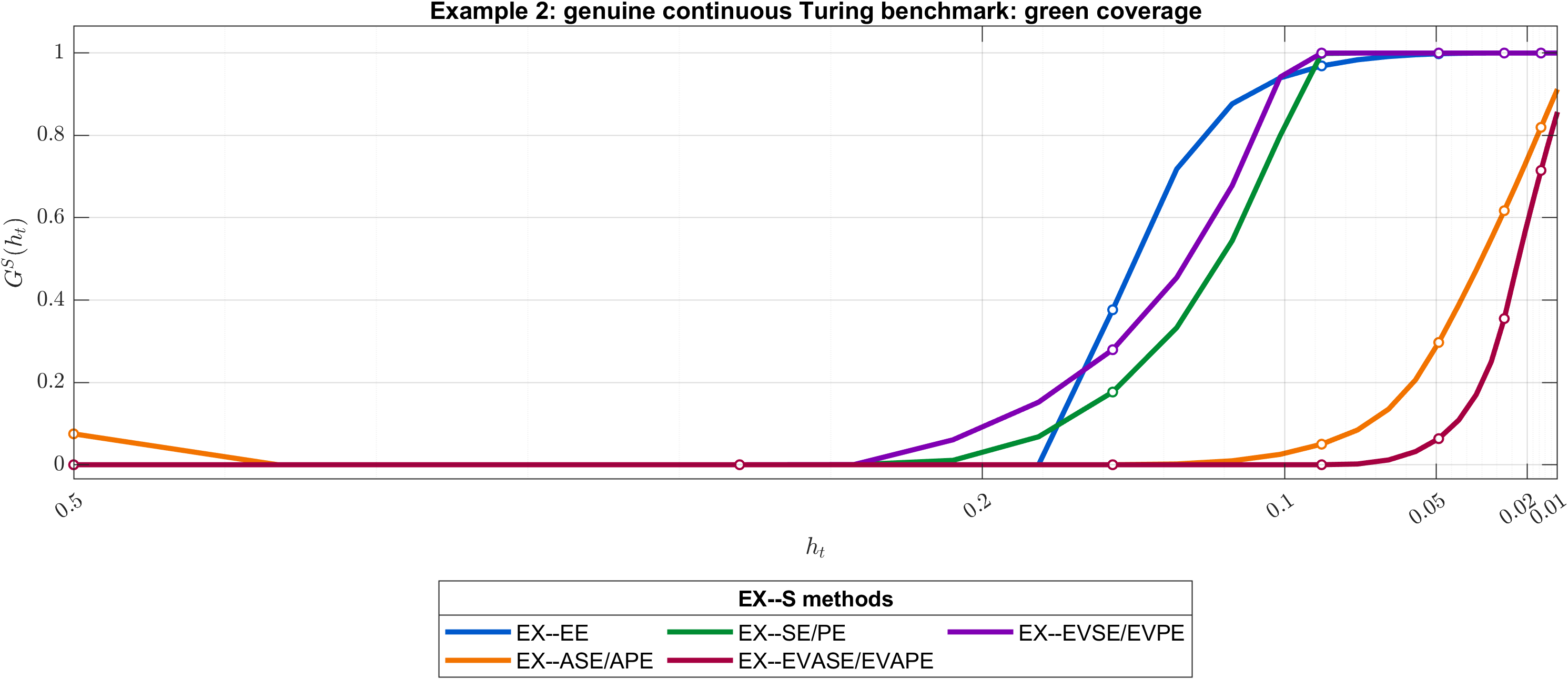}
\subcaption*{(b) Example 2}
\end{minipage}
\caption{Green coverage $G^\mathcal S(h_t)$ for the EX--$\mathcal{S}$ family. Panel (a) refers to
the neighbourhood of Example~1, while panel (b) refers to the 
continuous Turing regime of Example~2.}
\label{fig:green_coverage_EX}
\end{figure}

The EX--$\mathcal{S}$ results, reported in Figure~\ref{fig:green_coverage_EX}, confirm the
same qualitative picture. Since diffusion is treated exactly, these curves
mainly reflect the effect of the reaction update on the preservation of the
continuous Turing region. Again, the SE/PE and EVSE/EVPE variants retain a
larger green coverage for a wider range of time steps, whereas the adjoint
variants recover the continuous Turing region only when the time step becomes
sufficiently small. The behaviour of EX--EE is intermediate and depends on the
benchmark.

The green-coverage curves thus provide a compact quantitative counterpart of
the parameter-plane maps: a single scalar $G^\mathcal S(h_t)$ per scheme and
time step, increasing towards one as $h_t\to0$, whose ordering across families
is consistent in the IE--$\mathcal S$ and EX--$\mathcal S$ settings. The
comparative reading of this ordering is deferred to the synthesis of
Section~\ref{subsec:comparative_summary}.

\subsection{Comparative summary across diffusion treatments}
\label{subsec:comparative_summary}

Collecting the IE--$\mathcal S$ and EX--$\mathcal S$ analyses, three conclusions
are independent of how diffusive flow is treated.

The differences observed in the parameter-plane maps are governed primarily by
the reaction substep, not by the diffusion solver. IE--SE/PE and EX--SE/PE
behave alike, and so do the two adjoint families; switching from implicit Euler
to exact diffusion rescales the unstable regions but leaves the
green/orange/blue classification qualitatively unchanged. This is the
parameter-plane counterpart of the fact, established analytically in
Proposition~\ref{prop:continuous_interpretation}, that the leading
$\mathcal J_1$ contribution $h_t^2\,h(\mu)$ is scheme-independent.

The two pathologies are opposite and tied to different Jury
conditions. The EE-type schemes (IMEX) reproduce the continuous Turing sign
exactly through $\mathcal J_1$ but may create spurious non-homogeneous
instability through $\mathcal J_2$; the adjoint-symplectic schemes never violate
$\mathcal J_2$ in these regimes but fail to activate $\mathcal J_1$ on the
continuous Turing band. The green-coverage index $G^\mathcal S(h_t)$ of
Section~\ref{subsec:green_coverage_section7} condenses both effects into a
single curve and shows the same ordering in both settings.

The SE/PE family is the most balanced first-order reaction solver among those
considered. It avoids the $\mathcal J_2$-driven spurious pattern of IMEX in the
Turing-stable benchmark and the delayed suppression of the adjoint family in the
continuous Turing regime, retaining the largest green coverage over the broadest
range of admissible time steps, with both implicit and exact diffusion. However, we stress that this is an empirical conclusion on two benchmarks: as the
dominance analysis of Section~\ref{subsec:dominance} shows, no first-order scheme considered here provides an $h_t$-independent guarantee of
Turing-region preservation.

\section{Conclusions}
\label{sec:conclusions}
 
We have developed a fully discrete Turing instability analysis for
matrix-oriented first-order splitting methods applied to two-species
reaction--diffusion systems, and specialized the discussion to the
Gierer--Meinhardt activator--inhibitor model.  The main point of the paper is
that numerical pattern formation must be interpreted at the level of the fully
discrete map.  The continuous Turing polynomial \(h(\mu)\) identifies the
diffusion-driven instability of the differential problem, but the time
integrator introduces additional  conditions that may either create or
suppress non-homogeneous growth.  Each scheme therefore induces its own
discrete Turing region, and the relevant question is not whether the method is
stable, but whether this region coincides with the continuous one.  In the
language of geometric numerical integration, the diffusion-driven instability
is a qualitative property of the continuous flow, and a faithful integrator
should preserve it: it should be unstable on exactly the continuously unstable
modes and stable on the others.  The Jury-based analysis and the
parameter-plane maps developed here make this comparison explicit and
quantitative, so that the discrete Turing region of a candidate scheme can be
checked against the continuous one before the scheme is trusted to interpret
numerical patterns.
 
For IMEX, the first Jury condition preserves the sign of the continuous
Turing polynomial exactly, as shown in
Remark~\ref{prop:EE_J1_structure}.  This makes IMEX a reliable detector
of continuous Turing modes through \(\mathcal J_1\).  However, the
Gierer--Meinhardt examples show that this property does not exclude other
discrete artefacts: in a continuous Turing-stable regime, IMEX may lose
stability through \(\mathcal J_2\) and converge to a stable spurious numerical
pattern. Thus exact preservation of the Turing polynomial through
\(\mathcal J_1\) must be complemented by a check of the second Jury condition.
 
The adjoint-symplectic family displays the opposite pathology.  In the
continuous Turing regime considered here, IE--ASE/APE and EX--ASE/APE remain
stable on all modes even though the continuous model has a nonempty Turing
band. Their large homogeneous-mode stability thresholds are therefore
misleading: the methods are stable, but they are stable in a way that
suppresses the pattern-forming mechanism of the underlying PDE.  This confirms
that homogeneous stability is not an adequate proxy for pattern fidelity.
 
The SE/PE family plays an important role in the examples.  In the
continuous Turing-stable regime where IMEX produces a stable spurious
numerical pattern, IE--SE/PE and EX--SE/PE remain stable and damp the same
initial perturbation. This suggests that comparing families with different
reaction substeps can help distinguish a continuous instability from
a scheme-dependent artefact.
 
The dominance analysis sharpens this picture for IMEX. Because its first Jury
condition reproduces the sign of \(h(\mu)\) exactly, IMEX is Turing-region
preserving whenever the homogeneous mode is admissible in a genuine Turing
regime, and, in a Turing-stable regime, whenever an additional explicit
time-step bound controls the second Jury condition. The dominance of the second
Jury quantity is thus required only in the Turing-stable regime, where it rules
out the spurious pattern; it plays no role when the continuous model is
genuinely Turing unstable. What none of the first-order schemes provides is an
\(h_t\)-independent guarantee, valid uniformly in the time step. Thus, the
geometric requirement is not simply to enlarge stability bounds, but to design a
method whose discrete instability set is tied, by construction, to the
continuous Turing polynomial.
 
This points to a natural direction for future work: the construction of
integrators that preserve the continuous Turing region as a geometric
structure of the reaction--diffusion flow. Such a method should combine
homogeneous-mode stability with an exact or sign-preserving discretization of
the Turing polynomial, while ensuring that the remaining Jury conditions cannot
alter the continuous instability mechanism. In this sense, the present paper
does not identify a final optimal scheme; rather, it provides the algebraic
criteria that a Turing-region-preserving scheme should satisfy.
 
A further
direction, particularly relevant for the reaction kinetics considered here, is
the extension to positivity-preserving integrators. The Gierer--Meinhardt
reaction is intrinsically positive, and indeed in the EX--EE example the loss
of positivity of the numerical solution is precisely what drives the scheme to
break down. Geometric conservative nonstandard integrators of GeCo type
\cite{martiradonna2020geco} and modified Patankar--Runge--Kutta schemes
\cite{kopecz2018unconditionally} and their linear multistep extensions
\cite{izzo2025modified}, designed to preserve positivity (and, where
present, linear invariants) of biochemical and ecological systems
\cite{diele2020geometric, blanes2022positivity}, are natural candidates for a reaction substep within
the present splitting framework.  Carrying out the fully discrete Jury analysis
for such reaction maps would identify schemes that are not only
Turing-region-preserving in the sense developed above, but also unconditionally
positive, thereby ruling out the non-physical breakdown observed for the
explicit reaction step.  

Finally, the present Jury analysis is linear around
the homogeneous equilibrium, whereas the stable spurious patterns observed in
the IMEX example are nonlinear saturated states of the discrete map. A weakly nonlinear  analysis of the fully discrete dynamics
would clarify when a discrete-only instability saturates and when it instead
drives the computation away from the physically meaningful regime. Such an
analysis could build on the multiple-scales amplitude-equation approach
developed for continuous Turing patterns~\cite{bozzini2015weakly}, on recent
numerical bifurcation analyses of Turing and symmetry-broken patterns in
ecological reaction--diffusion models~\cite{spiliotis2026numerical}, and on
the observation that Turing instabilities alone do not necessarily ensure
sustained pattern formation~\cite{krause2024turing}. This perspective could also support the design of
interactive numerical environments for reaction--diffusion models, such as
VisualPDE~\cite{walker2023visualpde}, by complementing visual
exploration with scheme-dependent indicators of the corresponding discrete Turing regions.

\section*{Software availability}
\noindent
The MATLAB source code for the implementations used to compute the presented results can be downloaded from \url{https://github.com/CnrIacBaGit/Discrete_Turing}

\section*{Acknowledgements}

\noindent
F.D., C.M. and A.M. research activity is funded by PR PUGLIA FESR FSE+ 2021-2027 - Fondo Europeo Sviluppo Regionale - Asse Prioritario I ``Competitività e Innovazione'' - Obiettivo specifico RSO1.1 - Azione 1.5 ``Interventi per il rafforzamento del sistema innovativo regionale e sostegno alla collaborazione tra imprese e strutture di ricerca'' - Sub-Azione 1.5.1 ``Supporto alle attività di ricerca e sviluppo su aree tematiche di rilievo e all'applicazione di soluzioni tecnologiche funzionali alla realizzazione delle strategie di S3'', Avviso pubblico ``Reti - Sostegno alla ricerca collaborativa'' approvato con A.D. n. 208/2024, A.D. n. 216/2024, A.D. n. 227/2024, A.D. n. 230/2024 e AD n.3/2025, CUP B89J24003660007, project title ``PRISM- Diagnosi precoce e responsiva nei Vigneti Mediterranei''.

\noindent
F.D., C.M. and A.M. are members of the INdAM research group GNCS;  F.D., C.M. and A.M. would like to thank Mr. Cosimo Grippa for his valuable technical support.
\bibliographystyle{plain}
\bibliography{references.bib}

\end{document}